\numberwithin{equation}{section}
\theoremstyle{plain}
\newtheorem{thm}{\protect\theoremname}[section]
  \theoremstyle{definition}
  \newtheorem{defn}[thm]{\protect\definitionname}
  \theoremstyle{remark}
  \newtheorem{rem}[thm]{\protect\remarkname}
  \theoremstyle{definition}
  \newtheorem{example}[thm]{\protect\examplename}
  \theoremstyle{plain}
  \newtheorem{lem}[thm]{\protect\lemmaname}
  \theoremstyle{plain}
  \newtheorem{cor}[thm]{\protect\corollaryname}
  \theoremstyle{plain}
  \newtheorem{prop}[thm]{\protect\propositionname}
  \theoremstyle{definition}
  \newtheorem{problem}[thm]{\protect\problemname}
  \theoremstyle{remark}
  \newtheorem*{acknowledgement*}{\protect\acknowledgementname}
\let\mathcal=\CMcal
\let\ldash=\l
\DeclareFontFamily{U}{mathx}{\hyphenchar\font45}
\DeclareFontShape{U}{mathx}{m}{n}{
      <5> <6> <7> <8> <9> <10>
      <10.95> <12> <14.4> <17.28> <20.74> <24.88>
      mathx10
      }{}
\DeclareSymbolFont{mathx}{U}{mathx}{m}{n}
\DeclareMathAccent{\widecheck}{0}{mathx}{"71}
\renewcommand{\theenumi}{(\alph{enumi})}
\renewcommand{\labelenumi}{\theenumi}
  \providecommand{\acknowledgementname}{Acknowledgement}
  \providecommand{\corollaryname}{Corollary}
  \providecommand{\definitionname}{Definition}
  \providecommand{\examplename}{Example}
  \providecommand{\lemmaname}{Lemma}
  \providecommand{\problemname}{Problem}
  \providecommand{\propositionname}{Proposition}
  \providecommand{\remarkname}{Remark}
\providecommand{\theoremname}{Theorem}
\begin{document}
\global\long\def\e{\varepsilon}
\global\long\def\N{\mathbb{N}}
\global\long\def\Z{\mathbb{Z}}
\global\long\def\Q{\mathbb{Q}}
\global\long\def\R{\mathbb{R}}
\global\long\def\C{\mathbb{C}}
\global\long\def\G{\mathbb{G}}
\global\long\def\HH{\mathbb{H}}
\global\long\def\norm#1{\left\Vert #1\right\Vert }
\global\long\def\cbnorm#1{\left\Vert #1\right\Vert _{\mathrm{cb}}}
\global\long\def\cco{\overline{\mathrm{co}}}

\global\long\def\H{\EuScript H}
\global\long\def\K{\EuScript K}
\global\long\def\a{\alpha}
\global\long\def\be{\beta}
\global\long\def\l{\lambda}
\global\long\def\z{\zeta}
\global\long\def\Aa{\mathcal{A}}

\global\long\def\Img{\operatorname{Im}}
\global\long\def\linspan{\operatorname{span}}
\global\long\def\slim{\operatorname*{s-lim}}
\global\long\def\LIM{\operatorname*{LIM}}
\global\long\def\cco{\operatorname{\overline{co}}}
\global\long\def\clinspan{\operatorname{\overline{span}}}
\global\long\def\ev{\operatorname{ev}}

\global\long\def\tensor{\otimes}
\global\long\def\tensormin{\otimes_{\mathrm{min}}}
\global\long\def\tensorn{\overline{\otimes}}
\global\long\def\tensori{\widecheck\otimes}
\global\long\def\tensorp{\widehat{\otimes}}
\global\long\def\tensorf{\overline{\otimes}_{\mathcal{F}}}
\global\long\def\qtensor{\xymatrix{*+<.7ex>[o][F-]{\scriptstyle \top}}
 }

\global\long\def\A{\forall}

\global\long\def\i{\mathrm{id}}

\global\long\def\one{\mathds{1}}
\global\long\def\tr{\mathrm{tr}}

\global\long\def\om{\omega}
\global\long\def\Linfty#1{L^{\infty}(#1)}
\global\long\def\Lone#1{L^{1}(#1)}
\global\long\def\Ad#1{\mathrm{Ad}(#1)}
\global\long\def\Mor#1{\mathrm{Mor}(#1)}
\global\long\def\AP{\mathrm{AP}}
\global\long\def\CAP{\mathrm{CAP}}
\global\long\def\CB{\mathcal{CB}}
\global\long\def\CK{\mathcal{CK}}

\title{Ergodic theory for quantum semigroups}

\author{Volker Runde}

\address{Department of Mathematical and Statistical Sciences, University of
Alberta, Edmonton, Alberta T6G 2G1, Canada}

\email{vrunde@ualberta.ca}

\author{Ami Viselter}

\address{Department of Mathematical and Statistical Sciences, University of
Alberta, Edmonton, Alberta T6G 2G1, Canada}

\email{viselter@ualberta.ca}

\thanks{Both authors were supported by NSERC Discovery Grants.}

\keywords{Action, almost periodicity, amenability, Hopf--von Neumann algebra,
locally compact quantum group, noncommutative ergodic theorem}

\subjclass[2010]{Primary: 20G42, Secondary: 22D25, 37A15, 37A25, 37A30, 46L89}
\begin{abstract}
Recent results of L.~Zsid\'{o}, based on his previous work with
C.~P.~Niculescu and A.~Str\"{o}h, on actions of topological semigroups
on von Neumann algebras, give a Jacobs--de~Leeuw--Glicksberg splitting
theorem at the von Neumann algebra (rather than Hilbert space) level.
We generalize this to the framework of actions of quantum semigroups,
namely Hopf--von Neumann algebras. To this end, we introduce and study
a notion of almost periodic vectors and operators that is suitable
for our setting.
\end{abstract}
\maketitle

\section*{Introduction}

The celebrated Jacobs--de~Leeuw--Glicksberg splitting theorem \citep{Jacobs_ergodic_thy_1956,deLeeuw_Glicksberg__appl_almst_per_com}
is a fundamental result in ergodic theory. It considers a weakly almost
periodic semigroup $\mathcal{S}$ of operators over a Banach space
$X$, and under an amenability condition, it gives a decomposition
of $X$ as the direct sum of the \emph{almost periodic} vectors of
$\mathcal{S}$ and the \emph{weakly mixing} (or \emph{flight}) vectors
of $\mathcal{S}$. Assume now that $N$ is a von Neumann algebra,
$\om$ is a faithful normal state of $N$, $G$ is a topological semigroup
and $\a=\left(\a_{s}\right)_{s\in G}$ is an $\om$-preserving action
of $G$ on $N$. The quadruple $(N,\om,G,\a)$ is a \emph{noncommutative
dynamical system}, generalizing the standard ergodic-theoretic setting
of a measure-preserving transformation $T$ acting on a probability
space $(\Omega,\mu)$, giving rise to the action $f\mapsto f\circ T$,
$f\in\Linfty{\Omega,\mu}$. On the GNS Hilbert space $\H$ of $(N,\om)$,
one constructs the canonical semigroup $\left(U_{s}\right)_{s\in G}$
of isometries implementing $\a$. The Jacobs--de~Leeuw--Glicksberg
theorem now applies to $\mathcal{S}:=\left\{ U_{s}:s\in G\right\} $,
yielding a decomposition of $\H$. But in this setting, it is also
desirable to obtain a decomposition of $N$ as a direct sum of almost
periodic operators and weakly mixing operators. This problem was considered
by Niculescu, Str\"{o}h and Zsid\'{o} \citep{Niculescu_Stroh_Zsido__noncmt_recur}
in the case $G=\Z_{+}$, and by Zsid\'{o} \citep{Zsido__splitting_noncomm_dyn_sys}
in the general case where $G$ is a locally compact unital semigroup.
They define the notion of almost periodicity of an operator in $N$,
prove that the set of almost periodic operators forms a von Neumann
subalgebra $N^{\AP}$ of $N$, and establish the existence of an $\om$-preserving
conditional expectation from $N$ onto $N^{\AP}$ \citep[Theorem 4.2]{Niculescu_Stroh_Zsido__noncmt_recur}.
The decomposition of $N$ given by this conditional expectation is
the desired one.

Extending classical results of ergodic theory to noncommutative dynamical
systems has been a central research theme for many years. In addition
to the references mentioned above, we refer the reader to Abadie and
Dykema \citep{Abadie_Dykema__unique_erg}, Austin, Eisner and Tao
\citep{Austin_Eisner_Tao_noncomm_erg_av}, Beyers, Duvenhage and Str{\"o}h
\citep{Beyers_Duvenhage_Stroh__Szemeredi}, Duvenhage \citep{Duvenhage__erg_thm_quantum,Duvenhage__ergodicity_and_mixing,Duvenhage__relatively_ind_join},
Fidaleo and Mukhamedov \citep{Fidaleo_Mukhamedov__erg_prop} and the
references therein for (a partial list of) recent works in this spirit.
The purpose of the present paper is to extend the results of \citep{Niculescu_Stroh_Zsido__noncmt_recur,Zsido__splitting_noncomm_dyn_sys}
detailed in the previous paragraph to actions of quantum semigroups,
namely Hopf--von Neumann algebras. In this setting the problems become
much more delicate. For instance, it is not obvious at first what
the proper definition of almost periodicity should be.

The structure of the paper is as follows. In \prettyref{sec:prelim_notation}
we give some background and establish the notation and standing hypothesis.
In \prettyref{sec:basic_erg_thy} we prove a generalized mean ergodic
theorem, extending the mean ergodic theorem for Hopf--von Neumann
algebras of Duvenhage \citep{Duvenhage__erg_thm_quantum}. This generalization
is of independent interest. \prettyref{sec:CAP} is dedicated to (complete)
almost periodicity of actions of Hopf--von Neumann algebras: after
giving some motivation, we establish the fundamental compactification
result (\prettyref{thm:compactification_CAP}) and then give an alternative
definition of complete almost periodicity (\prettyref{cor:CAP_second_def}).
The main results of the paper are obtained in \prettyref{sec:analysis_of_CAP}.
In particular, we prove that the set $N^{\CAP}$ of completely almost
periodic operators is a von Neumann subalgebra of $N$ (\prettyref{thm:N_CAP_vN_alg}),
and that under certain conditions, there is a canonical conditional
expectation from $N$ to $N^{\CAP}$ (\prettyref{cor:E_CAP}), which
provides the Jacobs--de~Leeuw--Glicksberg splitting of $N$.

\section{Preliminaries and notation\label{sec:prelim_notation}}

We shall use the notation of Effros and Ruan \citep{Effros_Ruan__book}
for operator space terminology. Throughout the paper, the symbols
$\odot$, $\tensor$, $\tensormin$, $\tensori$, $\tensorp$ and
$\tensorn$ stand for the following respective types of tensor products:
the algebraic, Hilbert space, $C^{*}$-algebraic minimal (spatial),
operator space injective, projective and normal spatial (including
von Neumann algebraic). A projection over a Hilbert space is always
orthogonal. In the context of operators over a Hilbert space, ``weak''
and ``strong'' refer to the weak and strong operator topologies,
respectively. We assume that the reader is familiar with the basics
of compact quantum groups (Woronowicz \citep{Woronowicz__symetries_quantiques},
Maes and Van Daele \citep{Maes_van_Daele__notes_CQGs}) and the Tomita--Takesaki
modular theory \citep{Stratila__mod_thy,Takesaki__Tomita_thy,Takesaki__book_vol_2}.

The basic quantum structure we consider in this paper is that of Hopf--von
Neumann algebras.
\begin{defn}[\citep{Enock_Schwartz__book}]
A \emph{Hopf--von Neumann algebra} is a pair $\G=(\Linfty{\G},\Delta)$,
where $\Linfty{\G}$ is a von Neumann algebra and $\Delta:\Linfty{\G}\to\Linfty{\G}\tensorn\Linfty{\G}$
is a co-multiplication, that is, a unital normal $*$-homomorphism
which is co-associative: $(\Delta\tensor\i)\Delta=(\i\tensor\Delta)\Delta$.
\end{defn}
If $\G$ is a Hopf--von Neumann algebra, we denote $\Lone{\G}:=\Linfty{\mbox{\ensuremath{\G}}}_{*}$.
This space is a Banach algebra with the product $*$ induced by the
predual $\Delta_{*}:\Lone{\G}\tensorp\Lone{\G}\to\Lone{\G}$, that
is, $\left(\om_{1}*\om_{2}\right)(x):=(\om_{1}\tensor\om_{2})\Delta(x)$
($\om_{1},\om_{2}\in\Lone{\G}$, $x\in\Linfty{\G}$).
\begin{defn}
A \emph{co-representation} of $\G$ is an operator $u\in C\tensorn\Linfty{\G}$,
for some von Neumann algebra $C$, satisfying
\[
(\i\tensor\Delta)(u)=u_{12}u_{13}
\]
(we use the customary leg numbering notation). If $C=B(\K)$ for some
Hilbert space $\K$, we say that $u$ is a co-representation of $\G$
on $\K$. A closed subspace $\K'$ of $\K$ is said to be invariant
under $u$ if $u(p\tensor\one)=(p\tensor\one)u(p\tensor\one)$, where
$p$ is the projection of $\K$ onto $\K'$. The restriction $u(p\tensor\one)\in B(\K')\tensorn\Linfty{\G}$
is a typical sub-representation of $\G$.
\end{defn}

\begin{defn}
A Hopf--von Neumann algebra $\G$ is called a \emph{locally compact
quantum group} (see Kustermans and Vaes \citep{Kustermans_Vaes__LCQG_C_star,Kustermans_Vaes__LCQG_von_Neumann}
or Van Daele \citep{Van_Daele__LCQGs}, and for an alternative approach
see Masuda, Nakagami and Woronowicz \citep{Masuda_Nakagami_Woronowicz}),
abbreviated LCQG, if $\Linfty{\G}$ admits a pair of normal, semi-finite,
faithful weights $\varphi,\psi$, called the left and right Haar weights,
that are left and right invariant (respectively) in the sense that
$\varphi((\omega\tensor\i)\Delta(x))=\omega(\one)\varphi(x)$ for
all $\om\in\Lone{\G}^{+}$ and $x\in M^{+}$ such that $\varphi(x)<\infty$,
and $\psi((\i\tensor\omega)\Delta(x))=\omega(\one)\psi(x)$ for all
$\om\in\Lone{\G}^{+}$ and $x\in M^{+}$ such that $\psi(x)<\infty$.
\end{defn}
The deep theory of locally compact quantum groups is not needed in
most parts of the paper, especially outside of the appendix.
\begin{defn}[\citep{Enock_Schwartz__amenable_Kac_alg,Desmedt_Quaegebeur_Vaes,Bedos_Tuset_2003}]
Let $\G$ be a Hopf--von Neumann algebra.
\begin{enumerate}
\item We say that $\G$ is \emph{amenable} if it admits a (two-sided) invariant
mean, that is, a state $m\in\Linfty{\G}^{*}$ with
\begin{equation}
m((\om\tensor\i)\Delta(x))=\om(\one)m(x)=m((\i\tensor\om)\Delta(x))\label{eq:invariant_mean}
\end{equation}
 for all $x\in\Linfty{\G}$, $\om\in\Lone{\G}$. Recall that if $\G$
is a LCQG, then the existence of either a left- or a right-invariant
mean (only one of the equalities in \prettyref{eq:invariant_mean})
is equivalent to amenability \citep[Proposition 3]{Desmedt_Quaegebeur_Vaes}.
\item We say that $\G$ is \emph{(left)} \emph{co-amenable} if $\Lone{\G}$
possesses a bounded left approximate identity.
\end{enumerate}
\end{defn}
\begin{rem}
\label{rem:amen_norm}Let $\G$ be an amenable Hopf--von Neumann algebra,
and let $m$ be an invariant mean on $\G$. One can use a standard
convexity argument (cf.~\citep[Lemma 3.12, (1)]{Tomatsu__amenable_discrete})
to show that there is a net $\left(m_{\kappa}\right)$ of states in
$\Lone{\G}$ with $m_{\kappa}\to m$ in the $\sigma(\Linfty{\G}^{*},\Linfty{\G})$-topology
and
\[
\lim_{\kappa}\left\Vert \theta*m_{\kappa}-\theta(\one)m_{\kappa}\right\Vert =0=\lim_{\kappa}\left\Vert m_{\kappa}*\theta-\theta(\one)m_{\kappa}\right\Vert \qquad(\forall\theta\in\Lone{\G}).
\]
\end{rem}
\begin{example}
\label{exa:basic_LCQGs}The two basic examples of LCQGs are the ones
that come from a locally compact group $G$ as follows.
\begin{enumerate}
\item \label{enu:comm_LCQG}Let $\Linfty{\G}$ be $\Linfty G$ and define
$\Delta$ by $(\Delta(f))(t,s):=f(ts)$ ($f\in\Linfty G$, $s,t\in G$),
employing the natural identification $\Linfty G\tensorn\Linfty G\cong\Linfty{G\times G}$.
Now the product $*$ is the convolution on $\Lone{\G}=\Lone G$. Letting
$\varphi,\psi$ be integration against a left and a right Haar measure,
respectively, one obtains the \emph{commutative LCQG} $\G$ associated
with $G$. This $\G$ is always co-amenable, and is amenable if and
only if $G$ is amenable as a group.
\item \label{enu:co_comm_LCQG}Let $\Linfty{\G}$ be $\mathrm{VN}(G)$,
the (left) von Neumann algebra of $G$. We define $\Delta$ to be
the unique unital normal $*$-homomorphism from $\mathrm{VN}(G)$
to $\mathrm{VN}(G)\tensorn\mathrm{VN}(G)$ that satisfies $\Delta(\l_{g})=\l_{g}\tensor\l_{g}$
for every $g\in G$ ($\l_{g}$ being the left translation by $g$),
and take both $\varphi,\psi$ to be the Plancherel weight of $G$
\citep{Takesaki__book_vol_2}. This gives the \emph{co-commutative
LCQG} associated with $G$. We often write $\hat{G}$ for this $\G$.
It is always amenable, and is co-amenable if and only if $G$ is amenable
as a group.
\end{enumerate}
\end{example}
\begin{defn}
An \emph{action} of a Hopf--von Neumann algebra $\G$ on a von Neumann
algebra $N$ is a unital normal $*$-homomorphism $\a:N\to N\tensorn\Linfty{\G}$
satisfying
\[
(\a\tensor\i)\a=(\i\tensor\Delta)\a.
\]

\end{defn}
\noindent \textbf{Standing hypothesis.} Throughout the paper we assume
that $\G=(\Linfty{\G},\Delta)$ is a Hopf--von Neumann algebra and
$N$ is a von Neumann algebra such that the following hold:
\begin{enumerate}
\item $\G$ is amenable and (left) co-amenable; and
\item $\a$ is an action of $\G$ on $N$, and $\om$ is a normal faithful
state of $N$ invariant under $\a$, that is,
\[
(\om\tensor\i)\a=\om(\cdot)\one.
\]
Furthermore, for some bounded left approximate identity $(\epsilon_{\l})$
of the Banach algebra $\Lone{\G}$, we have
\[
(\i\tensor\epsilon_{\l})\a(a)\to a
\]
weakly for every $a\in N$.
\end{enumerate}
Our setting evidently generalizes that of \citep{Niculescu_Stroh_Zsido__noncmt_recur}
(take $\Linfty{\G}:=\ell_{\infty}(\Z_{+})$ and define $\Delta$ as
in \prettyref{exa:basic_LCQGs}, \prettyref{enu:comm_LCQG}).
\begin{rem}
While $N$ should be countably decomposable for $\om$ to exist, there
is no other condition on $N$. We believe it should be possible to
obtain stronger results on multiple recurrence in the spirit of Austin,
Eisner and Tao \citep{Austin_Eisner_Tao_noncomm_erg_av} if $N$ is
assumed finite (also see \prettyref{rem:E_CAP_when_N_finite}).
\end{rem}
We let $\Linfty{\G}$ act on some Hilbert space $L^{2}(\G)$ (not
necessarily in standard position). We denote by $(\H,\i,\Gamma)$
the GNS construction for $(N,\om)$. Since $\omega$ is invariant
under the action $\a$, the isometry $U\in B(\H)\tensorn\Linfty{\G}$
determined by
\[
((\i\tensor\theta)(U))\Gamma(a)=\Gamma\bigl((\i\tensor\theta)\a(a)\bigr)\qquad(\A\theta\in\Lone{\G},a\in N)
\]
implements $\a$ in the sense that $\a(a)U=U(a\tensor\one)$ for all
$a\in N$. Moreover, $U$ is a co-representation of $\G$.

For every $\z\in\H$ we define a bounded operator $T_{\z}:\Lone{\G}\to\H$
by $T_{\z}(\theta):=((\i\tensor\theta)(U))\z$ ($\theta\in\Lone{\G}$).
We note that Duvenhage \citep{Duvenhage__erg_thm_quantum} denotes
$T_{\z}(\theta)$ by $\tilde{\theta}^{\a}\z$.

We shall require a Hilbert space version of $\a$ as follows. Denote
by $\H_{c}$ the column Hilbert space determined by $\H$ \citep[\S 3.4]{Effros_Ruan__book}.
Since $\Gamma\in\CB(N,\H_{c})$, we have the map $\Gamma\tensor\i\in\CB(N\tensorn\Linfty{\G},\H_{c}\tensorn\Linfty{\G})$. 
\begin{lem}
\label{lem:alpha_tilde}The operators $T_{\z}$, $\z\in\H$, are completely
bounded, and can be regarded as elements of $\H_{c}\tensorn\Linfty{\G}$.
Furthermore, the operator $\tilde{\a}:\H_{c}\to\H_{c}\tensorn\Linfty{\G}$
given by $\tilde{\a}(\z):=T_{\z}$ for all $\z\in\H$ is completely
contractive. It satisfies $\tilde{\a}\circ\Gamma=(\Gamma\tensor\i)\circ\a$
and $(\i\tensor\Delta)\tilde{\a}=(\tilde{\a}\tensor\i)\tilde{\a}$
(which belong to $\CB(N,\H_{c}\tensorn\Linfty{\G})$ and $\CB(\H_{c},\H_{c}\tensorn\Linfty{\G}\tensorn\Linfty{\G})$,
respectively). Furthermore, $(\i\tensor\epsilon_{\l})\tilde{\a}(\xi)\to\xi$
weakly for every $\xi\in\H$. \end{lem}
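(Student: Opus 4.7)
My plan is to realize $\tilde\a$ concretely via the formula $\tilde\a(\z)=U(\z\tensor\one)$, viewed as an operator in $B(L^2(\G),\H\tensor L^2(\G))$, and then derive every assertion from this realization together with the co-representation identity for $U$.

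For the complete boundedness of each $T_\z$, I would regard $\z\in\H$ as the rank-one operator $\C\to\H,\,1\mapsto\z$, so that $\z\in\H_c=B(\C,\H)$; then set $V_\z:=U(\z\tensor\one)\in B(L^2(\G),\H\tensor L^2(\G))$ and verify $V_\z\in\H_c\tensorn\Linfty\G$ by the standard commutant test. For every $y\in\Linfty\G'$ a direct check gives $(\z\tensor\one)y=(\one_\H\tensor y)(\z\tensor\one)$, while $U\in B(\H)\tensorn\Linfty\G$ commutes with $\one_\H\tensor y$, so $V_\z y=(\one_\H\tensor y)V_\z$. Under the standard identification of $\H_c\tensorn\Linfty\G$ with the $\sigma$-weakly continuous completely bounded maps $\Lone\G\to\H_c$ via $V\mapsto(\theta\mapsto(\i\tensor\theta)V)$, the element $V_\z$ corresponds precisely to $\theta\mapsto((\i\tensor\theta)U)\z=T_\z(\theta)$. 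This identifies $T_\z$ with $V_\z$ as an element of $\H_c\tensorn\Linfty\G$, and $\tilde\a(\z)=V_\z$. Complete contractivity of $\tilde\a$ then follows at once, since $\z\mapsto\z\tensor\one$ is a complete isometry and left multiplication by the isometry $U$ is a complete contraction on $B(L^2(\G),\H\tensor L^2(\G))$.

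Next I would verify the two algebraic identities by slicing. For any $\theta\in\Lone\G$ and $a\in N$, the defining property of $U$ yields
\[
(\i\tensor\theta)(\tilde\a(\Gamma(a)))=((\i\tensor\theta)U)\Gamma(a)=\Gamma((\i\tensor\theta)\a(a))=(\i\tensor\theta)((\Gamma\tensor\i)\a(a)),
\]
which proves $\tilde\a\circ\Gamma=(\Gamma\tensor\i)\circ\a$. For the coassociative identity, slicing $(\i\tensor\Delta)\tilde\a(\z)$ against $\theta_1\tensor\theta_2\in\Lone\G\tensorp\Lone\G$ gives $((\i\tensor\theta_1*\theta_2)U)\z$; the corepresentation relation $(\i\tensor\Delta)U=U_{12}U_{13}$ rewrites this as $((\i\tensor\theta_1)U)((\i\tensor\theta_2)U)\z=T_{T_\z(\theta_2)}(\theta_1)$, which is exactly the corresponding slice of $(\tilde\a\tensor\i)\tilde\a(\z)$.

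Finally, for the weak convergence of the bounded approximate identity, specializing the first identity to $\xi=\Gamma(a)$ gives $(\i\tensor\epsilon_\l)\tilde\a(\Gamma(a))=\Gamma((\i\tensor\epsilon_\l)\a(a))$. By the standing hypothesis, $(\i\tensor\epsilon_\l)\a(a)\to a$ weakly in $N$, and since every vector functional on $\H$ pulls back via $\Gamma$ to a normal functional on $N$, this already gives $(\i\tensor\epsilon_\l)\tilde\a(\Gamma(a))\to\Gamma(a)$ weakly in $\H$. A standard three-$\e$ argument based on the density of $\Gamma(N)$ in $\H$ together with the uniform bound $\norm{(\i\tensor\epsilon_\l)\tilde\a(\xi)}\le(\sup_\l\norm{\epsilon_\l})\norm{\xi}$ then extends the weak convergence to arbitrary $\xi\in\H$. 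I expect the first step to be the main obstacle: once the clean realization $T_\z=U(\z\tensor\one)$ and its membership in $\H_c\tensorn\Linfty\G$ are in place, everything else follows essentially mechanically from slicing and the corepresentation identity.
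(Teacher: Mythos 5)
Your proof is correct and follows essentially the same route as the paper: the paper defines $\tilde{\a}(\z):=(\ev_{\z}\tensor\i)(U)$ with $\ev_{\z}:T\mapsto T\z$, which is literally your $U(\z\tensor\one)$, and derives complete contractivity from $\norm{U}=1$ just as you do. The only cosmetic differences are that you certify membership in $\H_{c}\tensorn\Linfty{\G}$ by the commutant test rather than by citing the embedding $\H_{c}\tensorn\Linfty{\G}\hookrightarrow\CB(\Lone{\G},\H_{c})$, and you verify co-associativity on all of $\H$ directly from $(\i\tensor\Delta)(U)=U_{12}U_{13}$, whereas the paper checks it on the dense subspace $\Gamma(N)$ using $(\i\tensor\Delta)\a=(\a\tensor\i)\a$; both are fine.
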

\begin{proof}
Recall that the operator space structure of $\H_{c}\tensorn\Linfty{\G}$
is given by the natural $w^{*}$-homeomorphic, completely isometric
embedding $\H_{c}\tensorn\Linfty{\G}\hookrightarrow\CB(\Lone{\G},\H_{c})$
\citep[Theorem 7.2.3 and Corollary 7.1.5]{Effros_Ruan__book}. For
every $\z\in\H$, the operator $\ev_{\z}:B(\H)\to\H_{c}$ given by
$T\mapsto T\z$, $T\in B(\H)$, belongs to $\CB(B(\H),\H_{c})$ and
is $w^{*}$-continuous. Furthermore, the map $\H_{c}\to\CB(B(\H),\H_{c})$
given by $\z\mapsto\ev_{\z}$ is a complete isometry. Define $\tilde{\a}:\H_{c}\to\H_{c}\tensorn\Linfty{\G}$
by $\tilde{\a}(\z):=(\ev_{\z}\tensor\i)(U)$. From the foregoing and
as $\left\Vert U\right\Vert =1$, $\tilde{\a}$ is completely contractive.
By the definition of $T_{\z}$, we clearly have $\tilde{\a}(\z)=T_{\z}$
for all $\z\in\H$, and from the definition of $U$ we obtain the
formula $\tilde{\a}(\Gamma(a))=(\Gamma\tensor\i)\a(a)$ for all $a\in N$.
Consequently, for every $a\in N$,
\[
\begin{split}(\i\tensor\Delta)\tilde{\a}(\Gamma(a)) & =(\i\tensor\Delta)(\Gamma\tensor\i)\a(a)=(\Gamma\tensor\i\tensor\i)(\i\tensor\Delta)\a(a)\\
 & =(\Gamma\tensor\i\tensor\i)(\a\tensor\i)\a(a)=(\tilde{\a}\tensor\i)(\Gamma\tensor\i)\a(a)=(\tilde{\a}\tensor\i)\tilde{\a}(\Gamma(a)).
\end{split}
\]
Hence $(\i\tensor\Delta)\tilde{\a}=(\tilde{\a}\tensor\i)\tilde{\a}$.
By assumption, $(\i\tensor\epsilon_{\l})\a(a)\to a$ weakly for all
$a\in N$, so that
\[
(\i\tensor\epsilon_{\l})\tilde{\a}(\Gamma(a))=\Gamma((\i\tensor\epsilon_{\l})\a(a))\to\Gamma(a)
\]
weakly. From the boundedness of $\left(\epsilon_{\l}\right)$ we infer
that $(\i\tensor\epsilon_{\l})\tilde{\a}(\xi)\to\xi$ weakly for every
$\xi\in\H$. This completes the proof.\end{proof}
\begin{rem}
\label{rem:alpha_tilde}For $\z\in\H$ and $\theta,\vartheta\in\Lone{\G}$,
we have
\[
T_{T_{\z}(\vartheta)}=\tilde{\a}((\i\tensor\vartheta)\tilde{\a}(\z))=(\i\tensor\i\tensor\vartheta)(\tilde{\a}\tensor\i)\tilde{\a}(\z)=(\i\tensor\i\tensor\vartheta)(\i\tensor\Delta)\tilde{\a}(\z).
\]
Thus
\[
T_{T_{\z}(\vartheta)}(\theta)=(\i\tensor\theta\tensor\vartheta)(\i\tensor\Delta)\tilde{\a}(\z)=(\i\tensor(\theta*\vartheta))\tilde{\a}(\z)=T_{\z}(\theta*\vartheta).
\]

\end{rem}
For the definition of a compact quantum group we use the $C^{*}$-algebraic
language, which is more suitable for our purposes, although there
is an equivalent von Neumann algebraic one.
\begin{defn}[\citep{Woronowicz__symetries_quantiques,Maes_van_Daele__notes_CQGs}]
\label{def:CQG}A \emph{compact quantum group} is a pair $\HH=(C(\HH),\Delta)$,
where $C(\HH)$ is a unital $C^{*}$-algebra, $\Delta:C(\HH)\to C(\HH)\tensormin C(\HH)$
is a $C^{*}$-algebraic co-multiplication, that is, a unital $*$-homomorphism
which is co-associative, i.e., $(\Delta\tensor\i)\Delta=(\i\tensor\Delta)\Delta$,
and furthermore, the sets $(C(\HH)\tensor\one)\Delta(C(\HH))$ and
$(\one\tensor C(\HH))\Delta(C(\HH))$ are norm total in $C(\HH)\tensormin C(\HH)$.
\end{defn}
If the pair $(C(\HH),\Delta)$ satisfies all these assumptions apart
from the density conditions, it is called a compact quantum semigroup.

\section{\label{sec:basic_erg_thy}Basic ergodic theory}
\begin{lem}
\label{lem:V_u}Let $C$ be a von Neumann algebra and $u\in C\tensorn\Linfty{\G}$.
Fix $\mu\in C_{*}^{+}$, and denote its GNS construction by $(\H_{\mu},\i,\Gamma_{\mu})$.
Denote $\Xi:=\Gamma_{\mu}\tensor\Gamma$. For every $\theta\in\Lone{\G}^{+}$,
the operator $V_{u}(\theta)$ given by
\[
V_{u}(\theta):\Xi(a)\mapsto\Xi\bigl[(\i\tensor\i\tensor\theta)\bigl(u_{13}\cdot(\i\tensor\a)(a)\bigr)\bigr]\qquad(\A a\in C\tensorn N)
\]
extends to an element of $B(\H_{\mu}\tensor\H)$ satisfying $\norm{V_{u}(\theta)}\leq\left\Vert u\right\Vert \norm{\theta}$.
Moreover, if $u$ is a co-representation of $\G$, then $V_{u}(\theta_{1})V_{u}(\theta_{2})=V_{u}(\theta_{1}*\theta_{2})$
for all $\theta_{1},\theta_{2}\in\Lone{\G}^{+}$.\end{lem}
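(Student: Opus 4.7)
The plan is to realise $V_u(\theta)$ as the $\theta$-slice of a single element of $B(\H_\mu \tensor \H) \tensorn \Linfty{\G}$ of norm at most $\|u\|$, from which the norm estimate and the multiplicativity in (b) both follow quickly. Let $\pi_\mu : C \to B(\H_\mu)$ be the GNS $*$-representation associated with $\mu$, characterised by $\pi_\mu(x) \Gamma_\mu(y) = \Gamma_\mu(xy)$, and define
\[
W := (\pi_\mu \tensor \i)(u)_{13} \cdot U_{23} \in B(\H_\mu \tensor \H) \tensorn \Linfty{\G}.
\]
Since $\pi_\mu \tensor \i$ is a contractive $*$-homomorphism and $U$ is an isometry, $\|W\| \leq \|u\|$, and so $\tilde V(\theta) := (\i \tensor \theta)(W)$ satisfies $\|\tilde V(\theta)\| \leq \|u\| \|\theta\|$.

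The key identity to establish is $\tilde V(\theta) \Xi(a) = \Xi\bigl[(\i \tensor \i \tensor \theta)(u_{13} \cdot (\i \tensor \a)(a))\bigr]$ for every $a \in C \tensorn N$. For simple tensors $u = c' \tensor f$ and $a = c \tensor n$ this is a direct leg-numbering computation: both sides reduce to $\Gamma_\mu(c'c) \tensor \Gamma\bigl((\i \tensor \theta_f)\a(n)\bigr)$, where $\theta_f(x) := \theta(fx)$, using the defining relations for $\pi_\mu$ and $U$ together with $(\i \tensor \theta)(y(\one \tensor f)) = (\i \tensor \theta_f)(y)$ for $y \in N \tensorn \Linfty{\G}$. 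By bilinearity the identity extends to algebraic tensors in $u$ and $a$. To pass to arbitrary $u \in C \tensorn \Linfty{\G}$ and $a \in C \tensorn N$, I would pair both sides against a fixed $\Xi(a')$: the resulting numerical quantities are $\sigma$-weakly continuous in $u$ and in $a$, since $\Xi$ is the GNS map of the normal functional $\mu \tensor \om$ and since leg embeddings, multiplication, slice maps, and $\i \tensor \a$ are all $\sigma$-weakly continuous. The $\sigma$-weak density of the algebraic tensor products then yields the identity in full generality. This simultaneously shows that $V_u(\theta)$ is well-defined on $\Xi(C \tensorn N)$, coincides with $\tilde V(\theta)$ there, and extends uniquely to the bounded operator $\tilde V(\theta)$ on all of $\H_\mu \tensor \H$ with the stated norm bound.

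For (b), assume $u$ is a co-representation. Then $(\pi_\mu \tensor \i)(u)$ inherits the co-representation identity from $u$ because $\pi_\mu \tensor \i$ is a $*$-homomorphism, and $U$ is already a co-representation by construction. Applying $\i \tensor \i \tensor \Delta$ to $W$ yields
\[
(\i \tensor \i \tensor \Delta)(W) = (\pi_\mu \tensor \i)(u)_{13} (\pi_\mu \tensor \i)(u)_{14} \cdot U_{23} U_{24}
\]
in $B(\H_\mu) \tensorn B(\H) \tensorn \Linfty{\G} \tensorn \Linfty{\G}$. The crucial observation is that $(\pi_\mu \tensor \i)(u)_{14}$ (legs $\{1,4\}$) and $U_{23}$ (legs $\{2,3\}$) occupy disjoint legs and therefore commute; swapping them rewrites the right-hand side as $W_{123} W_{124}$, so $W$ is itself a co-representation of $\G$ on $\H_\mu \tensor \H$. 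The multiplicativity $V_u(\theta_1) V_u(\theta_2) = V_u(\theta_1 * \theta_2)$ now follows from the general slice identity $(\i \tensor \theta_1)(W)(\i \tensor \theta_2)(W) = (\i \tensor \theta_1 \tensor \theta_2)(W_{123} W_{124}) = (\i \tensor (\theta_1 * \theta_2))(W)$ that holds for any co-representation. The main technical hurdle is the $\sigma$-weak density argument in (a); once that is in place, the leg-numbering in (b) is routine provided one keeps the multiplication order straight.
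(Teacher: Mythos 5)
Your proof is correct, but it takes a genuinely different route from the paper's. The paper works directly with the defining formula: it estimates $\left|\left\langle V_{u}(\theta)\Xi(a),\Xi(b)\right\rangle \right|=\left|(\mu\tensor\om\tensor\theta)\bigl(b_{12}^{*}u_{13}(\i\tensor\a)(a)\bigr)\right|$ via the Cauchy--Schwarz inequality for the \emph{positive} functional $\mu\tensor\om\tensor\theta$ (this is where $\theta\in\Lone{\G}^{+}$ is used) together with $\om$-invariance of $\a$, and then proves multiplicativity by a vector-level leg-numbering computation on $\Xi(a)$ using $(\a\tensor\i)\a=(\i\tensor\Delta)\a$ and $(\i\tensor\Delta)(u)=u_{12}u_{13}$. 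You instead package everything into the single operator $W=(\pi_{\mu}\tensor\i)(u)_{13}U_{23}$ and define $V_{u}(\theta)$ as its $\theta$-slice: the norm bound comes from $\norm W\leq\norm u$ and the fact that slice maps have norm at most $\norm{\theta}$, and multiplicativity comes from $W$ being a co-representation (your commutation of the disjoint legs $\{1,4\}$ and $\{2,3\}$ is exactly right). The cost of your approach is the extra identification step $(\i\tensor\theta)(W)\Xi(a)=\Xi\bigl[(\i\tensor\i\tensor\theta)(u_{13}(\i\tensor\a)(a))\bigr]$, for which your simple-tensor computation plus separate $\sigma$-weak continuity and density argument is valid (note the small slip: with $\theta_{f}(x):=\theta(fx)$ the relevant identity is $(\i\tensor\theta)\bigl((\one\tensor f)y\bigr)=(\i\tensor\theta_{f})(y)$, matching the fact that $u_{13}$ multiplies on the \emph{left}; your stated version with $y(\one\tensor f)$ would give $\theta(\cdot f)$, but your final formula is the correct one). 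The payoff is that your argument nowhere uses positivity of $\theta$, so it yields boundedness and multiplicativity of $V_{u}$ on all of $\Lone{\G}$, not just on $\Lone{\G}^{+}$; the paper's Cauchy--Schwarz argument is more elementary but genuinely tied to positive $\theta$.
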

\begin{proof}
Let $a,b\in C\tensorn N$. Since $\a$ is $\om$-invariant, we have
\begin{multline*}
\left|\left\langle V_{u}(\theta)\Xi(a),\Xi(b)\right\rangle \right|\\
\begin{split} & =\left|(\mu\tensor\om)\bigl[b^{*}\cdot(\i\tensor\i\tensor\theta)\left(u_{13}(\i\tensor\a)(a)\right)\bigr]\right|=\left|(\mu\tensor\om\tensor\theta)\bigl(b_{12}^{*}u_{13}(\i\tensor\a)(a)\bigr)\right|\\
 & \leq\norm{\theta}^{1/2}\norm{\Xi(b)}(\mu\tensor\om\tensor\theta)\bigl((\i\tensor\a)(a^{*})u_{13}^{*}u_{13}(\i\tensor\a)(a)\bigr)^{1/2}\\
 & \leq\norm{\theta}^{1/2}\norm{\Xi(b)}\left\Vert u\right\Vert (\mu\tensor\om\tensor\theta)\bigl((\i\tensor\a)(a^{*}a)\bigr)^{1/2}=\left\Vert \theta\right\Vert \norm{\Xi(b)}\norm{\Xi(a)}\left\Vert u\right\Vert
\end{split}
\end{multline*}
by the Cauchy--Schwarz inequality. This proves the first assertion.
If $u$ is a co-representation of $\G$, $\theta_{1},\theta_{2}\in\Lone{\G}^{+}$
and $a\in C\tensorn N$, then 
\[
\begin{split}V_{u}(\theta_{1})V_{u}(\theta_{2})\Xi(a) & =V_{u}(\theta_{1})\Xi\bigl[(\i\tensor\i\tensor\theta_{2})\bigl(u_{13}\cdot(\i\tensor\a)(a)\bigr)\bigr]\\
 & =\Xi\left\{ (\i\tensor\i\tensor\theta_{1})\left[u_{13}\cdot(\i\tensor\a)(\i\tensor\i\tensor\theta_{2})\bigl(u_{13}\cdot(\i\tensor\a)(a)\bigr)\right]\right\} \\
 & =\Xi\left\{ (\i\tensor\i\tensor\theta_{1}\tensor\theta_{2})\left[u_{13}u_{14}\cdot(\i\tensor(\a\tensor\i)\a)(a)\right]\right\} .
\end{split}
\]
Using that $\a$ is an action and $u$ is a co-representation, we
get
\[
\begin{split}V_{u}(\theta_{1})V_{u}(\theta_{2})\Xi(a) & =\Xi\left\{ (\i\tensor\i\tensor\theta_{1}\tensor\theta_{2})\left[(\i\tensor\i\tensor\Delta)\left(u_{13}(\i\tensor\a)(a)\right)\right]\right\} \\
 & =\Xi\left[(\i\tensor\i\tensor(\theta_{1}*\theta_{2}))\left(u_{13}(\i\tensor\a)(a)\right)\right]=V_{u}(\theta_{1}*\theta_{2})\Xi(a).\\
\\
\end{split}
\]
Hence $V_{u}(\theta_{1})V_{u}(\theta_{2})=V_{u}(\theta_{1}*\theta_{2})$
by continuity.
\end{proof}
The next result is a generalized mean ergodic theorem (cf.~\citep[Proposition 3.2]{Niculescu_Stroh_Zsido__noncmt_recur})
for Hopf--von Neumann algebras. When taking $u$ to be the trivial
representation, one recovers Duvenhage's result \citep{Duvenhage__erg_thm_quantum}.
\begin{thm}
\label{thm:genr_mean_ergodic}Let $C$ be a von Neumann algebra and
$u\in C\tensorn\Linfty{\G}$ be a unitary co-representation of $\G$.
Fix $\mu\in C_{*}^{+}$, denote its GNS construction by $(\H_{\mu},\i,\Gamma_{\mu})$
and let $\Xi:=\Gamma_{\mu}\tensor\Gamma$. Denote by $P_{u}$ the
projection (over $\H_{\mu}\tensor\H$) onto the space
\[
\bigcap_{\theta\text{ is a state in }\Lone{\G}}\ker(V_{u}(\theta)-\one).
\]
Let $\left(m_{\kappa}\right)$ be a net of states in $\Lone{\G}$
such that $\left\Vert m_{\kappa}*\theta-m_{\kappa}\right\Vert \to0$
and $\theta*m_{\kappa}-m_{\kappa}\to0$ in the $\sigma(\Lone{\G},\Linfty{\G})$-topology
for each state $\theta$ in $\Lone{\G}$ (see \prettyref{rem:amen_norm}).
\begin{enumerate}
\item \label{enu:genr_mean_ergodic__1}We have $V_{u}(m_{\kappa})\to P_{u}$
strongly.
\item \label{enu:genr_mean_ergodic__2}Suppose that $\mu$ is faithful.
There exists a unique projection $E_{u}$ from $C\tensorn N$ onto
the subspace $N_{u}:=\left\{ b\in C\tensorn N:(\i\tensor\a)(b)=u_{13}^{*}b_{12}\right\} $
satisfying $\Xi\circ E_{u}=P_{u}\circ\Xi$. Furthermore, $E_{u}$
is normal, it has norm $1$ (unless $E_{u}=0$),
\[
(\i\tensor\i\tensor m_{\kappa})\left[u_{13}\cdot(\i\tensor\a)(a)\right]\to E_{u}(a)
\]
strongly for every $a\in C\tensorn N$ and $\left(m_{\kappa}\right)$
as above and $\overline{\Xi(N_{u})}=\Img P_{u}$.
\end{enumerate}
\end{thm}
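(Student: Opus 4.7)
The plan is a classical mean-ergodic decomposition enhanced by the GNS structure attached to $\mu \tensor \omega$. For (a), Lemma \ref{lem:V_u} together with unitarity of $u$ shows $\|V_u(\theta)\| \leq 1$ for every state $\theta$, and that the states form a semigroup under convolution on which $\theta \mapsto V_u(\theta)$ is multiplicative. I would split $\H_\mu \tensor \H = \Img P_u \oplus (\Img P_u)^\perp$, and identify $(\Img P_u)^\perp$ with the closed linear span $M$ of the coboundaries $\{V_u(\theta)\xi - \xi : \theta\text{ a state},\ \xi \in \H_\mu\tensor\H\}$. The inclusion $M \subseteq (\Img P_u)^\perp$ rests on the Hilbert-space identity $\ker(T-\one) = \ker(T^*-\one)$ applied to each contraction $V_u(\theta)$: orthogonality to $M$ means $V_u(\theta)^*\xi = \xi$ for every state $\theta$, which is equivalent to $V_u(\theta)\xi = \xi$ and hence to $\xi \in \Img P_u$. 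The convergence $V_u(m_\kappa) \to P_u$ is then trivial on $\Img P_u$; on $M$ it follows from
\[
V_u(m_\kappa)(V_u(\theta)\xi - \xi) = V_u(m_\kappa * \theta - m_\kappa)\xi,
\]
whose norm is bounded by $\|m_\kappa * \theta - m_\kappa\|\,\|\xi\| \to 0$, and uniform boundedness extends this to $\overline{M}$.

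For (b), set $\phi_\kappa(a) := (\i\tensor\i\tensor m_\kappa)[u_{13}(\i\tensor\a)(a)]$, so that $\Xi(\phi_\kappa(a)) = V_u(m_\kappa)\Xi(a)$ and $\|\phi_\kappa(a)\| \leq \|a\|$ by unitarity of $u$ and contractivity of the state $m_\kappa$. Faithfulness of $\mu \tensor \om$ makes $\Omega := \Xi(\one)$ cyclic and separating for $C \tensorn N$ acting on $\H_\mu \tensor \H$. Part (a) gives $\phi_\kappa(a)\Omega = \Xi(\phi_\kappa(a)) \to P_u\Xi(a)$ in norm. By weak$^*$ compactness, the bounded net $(\phi_\kappa(a))$ has $\sigma$-weak cluster points; any two such cluster points agree when applied to the separating vector $\Omega$ and hence coincide, so $\phi_\kappa(a) \to E_u(a)$ $\sigma$-weakly for a unique $E_u(a) \in C \tensorn N$ characterized by $\Xi(E_u(a)) = P_u\Xi(a)$. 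Linearity of $E_u$ and $\|E_u(a)\| \leq \|a\|$ are then immediate, and the uniqueness of any map with this intertwining property follows from $\mu \tensor \om$ being faithful.

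To finish, I would identify $\Img P_u$ with $\overline{\Xi(N_u)}$ via the chain: $\Xi(a) \in \Img P_u$ iff $(\i\tensor\i\tensor\theta)[u_{13}(\i\tensor\a)(a)] = a$ for every state $\theta$ (by faithfulness), iff $u_{13}(\i\tensor\a)(a) = a_{12}$ (by norm-density of states in $\Lone{\G}^+$), iff $a \in N_u$ (using that $u$ is unitary). Combined with norm-density of $\Xi(C\tensorn N)$ this yields $\overline{\Xi(N_u)} = \Img P_u$, forces $E_u$ to take values in $N_u$ with $E_u|_{N_u} = \i$, and hence $E_u^2 = E_u$ and $\|E_u\| = 1$ unless $E_u = 0$. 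For normality, if $a_i \to a$ $\sigma$-weakly boundedly then $\Xi(a_i) \to \Xi(a)$ weakly in $\H_\mu \tensor \H$, so $\Xi(E_u(a_i)) = P_u\Xi(a_i) \to \Xi(E_u(a))$ weakly; any $\sigma$-weak cluster point $b$ of the bounded net $(E_u(a_i))$ satisfies $b\Omega = E_u(a)\Omega$ and hence $b = E_u(a)$ by the separating property. Strong convergence $\phi_\kappa(a) \to E_u(a)$ follows from norm convergence at $\Omega$ by a routine commutation: for $c \in (C\tensorn N)'$, $\phi_\kappa(a)(c\Omega) = c\phi_\kappa(a)\Omega \to cE_u(a)\Omega = E_u(a)(c\Omega)$, and $(C\tensorn N)'\Omega$ is dense because $\Omega$ is cyclic for the commutant. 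The main obstacle, as I see it, is the lifting step in (b): part (a) only gives norm convergence of the vectors $\phi_\kappa(a)\Omega$, and to upgrade this to an element $E_u(a)$ of the algebra itself one must simultaneously exploit that $\Omega$ is cyclic (to get the strong convergence and the density statement for $\Img P_u$) and separating (for uniqueness of cluster points and hence for normality) -- both of which the faithfulness of $\mu \tensor \om$ supplies.
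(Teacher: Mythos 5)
Your proof is correct and, in its overall architecture, matches the paper's: part \prettyref{enu:genr_mean_ergodic__1} via the identity $\ker(v-\one)=\ker(v^{*}-\one)$ for contractions, the identification of $(\Img P_{u})^{\perp}$ with the closed span of the coboundaries $V_{u}(\theta)\xi-\xi$, and the norm estimate $\norm{V_{u}(m_{\kappa}*\theta-m_{\kappa})}\leq\norm{m_{\kappa}*\theta-m_{\kappa}}$; part \prettyref{enu:genr_mean_ergodic__2} by extracting a weak cluster point of the bounded net, pinning it down via $\Xi(E_{u}(a))=P_{u}\Xi(a)$ using that $\Xi(\one)$ is separating, and upgrading to strong convergence through the density of $(C\tensorn N)'\Xi(\one)$. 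The one place where you genuinely diverge is the proof that $E_{u}$ maps into $N_{u}$ (equivalently, that $\Img P_{u}\cap\Xi(C\tensorn N)=\Xi(N_{u})$): the paper establishes this by computing $(\i\tensor\a)(E_{u}(a))$ as a weak limit, invoking the action identity $(\a\tensor\i)\a=(\i\tensor\Delta)\a$ and the approximate invariance of $(m_{\kappa})$ to conclude $u_{13}\cdot(\i\tensor\a)(E_{u}(a))=E_{u}(a)\tensor\one$. You instead observe directly that, by injectivity of $\Xi$ and the definition of $V_{u}(\theta)$, an element $a\in C\tensorn N$ satisfies $\Xi(a)\in\Img P_{u}$ if and only if $(\i\tensor\i\tensor\theta)\bigl(u_{13}(\i\tensor\a)(a)\bigr)=a$ for all states $\theta$, if and only if $u_{13}(\i\tensor\a)(a)=a_{12}$, i.e.\ $a\in N_{u}$; since $\Xi(E_{u}(a))=P_{u}\Xi(a)\in\Img P_{u}$, membership of $E_{u}(a)$ in $N_{u}$ is immediate. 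This shortcut is valid (the only care needed is that "for all states $\theta$" upgrades to "for all $\theta\in\Lone{\G}$" because the states span $\Lone{\G}$, not because they are norm-dense in $\Lone{\G}^{+}$ as you phrase it) and it buys you a noticeably shorter argument at no cost in generality; the paper's longer computation yields the same conclusion but also exhibits the invariance mechanism explicitly. Both proofs share the same minor elision in extending the bound $\norm{V_{u}(\theta)}\leq\norm{u}\norm{\theta}$ from positive $\theta$ to the self-adjoint functional $m_{\kappa}*\theta-m_{\kappa}$, which is handled by the Jordan decomposition.
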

\begin{rem}
We emphasize that neither $P_{u}$ nor $E_{u}$ depend on the chosen
net $\left(m_{\kappa}\right)$.\end{rem}
\begin{proof}[Proof of \prettyref{thm:genr_mean_ergodic}]
\prettyref{enu:genr_mean_ergodic__1} Recall that for any contraction
$v\in B(\H)$ we have $\ker(v-\one)=\ker(v^{*}-\one)$, thus $\ker(v-\one)^{\perp}=\overline{\Img(v-\one)}$.
Therefore $\Img(\one-P_{u})=\clinspan\bigcup_{\theta}\Img(V_{u}(\theta)-\one)$,
where the union goes over all states in $\Lone{\G}$. For every state
$\theta\in\Lone{\G}$, we have $V_{u}(m_{\kappa})\left(V_{u}(\theta)-\one\right)=V_{u}(m_{\kappa}*\theta-m_{\kappa})$,
and hence $\norm{V_{u}(m_{\kappa})\left(V_{u}(\theta)-\one\right)}\leq\left\Vert m_{\kappa}*\theta-m_{\kappa}\right\Vert \to0$.
Thus $V_{u}(m_{\kappa})\eta\to0$ for every $\eta$ in the dense subspace
$\linspan\bigcup_{\theta}\Img(V_{u}(\theta)-\one)$ of $\Img(\one-P_{u})$.
Since $\left(V_{u}(m_{\kappa})\right)_{\kappa}$ is uniformly bounded
by $1$, we deduce that $V_{u}(m_{\kappa})\eta\to0$ for all $\eta\in\Img(\one-P_{u})$.
Now, for all $\zeta\in\H_{\mu}\tensor\H$, we have
\[
V_{u}(m_{\kappa})\zeta=V_{u}(m_{\kappa})P_{u}\zeta+V_{u}(m_{\kappa})(\one-P_{u})\zeta=P_{u}\zeta+V_{u}(m_{\kappa})(\one-P_{u})\zeta\to P_{u}\zeta.
\]

\prettyref{enu:genr_mean_ergodic__2} Let $a\in C\tensorn N$ be given.
The net $\left((\i\tensor\i\tensor m_{\kappa})\bigl(u_{13}\cdot(\i\tensor\a)(a)\bigr)\right)_{\l}$
is bounded by $\norm a$, so it admits a weak cluster point in $C\tensorn N$,
denoted $E_{u}(a)$, with $\norm{E_{u}(a)}\leq\norm a$. By \prettyref{enu:genr_mean_ergodic__1}
we have
\begin{equation}
\Xi\left[(\i\tensor\i\tensor m_{\kappa})\bigl(u_{13}\cdot(\i\tensor\a)(a)\bigr)\right]=V_{u}(m_{\kappa})\Xi(a)\to P_{u}\Xi(a),\label{eq:using_genr_MEG_for_cond_exp}
\end{equation}
from which the identity $\Xi(E_{u}(a))=P_{u}\Xi(a)$ is obtained using
\prettyref{enu:genr_mean_ergodic__1}. This proves that $E_{u}$ does
not depend on the particularly chosen net $\left(m_{\kappa}\right)$
and that it is normal. Moreover, \prettyref{eq:using_genr_MEG_for_cond_exp}
entails that actually $(\i\tensor\i\tensor m_{\kappa})\bigl(u_{13}\cdot(\i\tensor\a)(a)\bigr)\to E_{u}(a)$
strongly, as we have convergence at the vector $\Xi(\one)$, which
is separating for $C\tensorn N$ on $\H_{\mu}\tensor\H$.

We now prove that $E_{u}(a)\in N_{u}$. As $\a$ is an action, we
have
\[
\begin{split}(\i\tensor\a)(E_{u}(a)) & =\lim_{\kappa}(\i\tensor\i\tensor\i\tensor m_{\kappa})(\i\tensor\a\tensor\i)\left[u_{13}\cdot(\i\tensor\a)(a)\right]\\
 & =\lim_{\kappa}(\i\tensor\i\tensor\i\tensor m_{\kappa})\left[u_{14}\cdot(\i\tensor(\a\tensor\i)\a)(a)\right]\\
 & =\lim_{\kappa}(\i\tensor\i\tensor\i\tensor m_{\kappa})\left[u_{14}\cdot(\i\tensor(\i\tensor\Delta)\a)(a)\right]
\end{split}
\]
(all limits are weak ones). Thus, if $\theta\in\Lone{\G}$ is a state,
then by approximate invariance of $(m_{\kappa})$,
\[
\begin{split}(\i\tensor\i\tensor\theta)\left[u_{13}\cdot(\i\tensor\a)(E_{u}(a))\right] & =\lim_{\kappa}(\i\tensor\i\tensor\theta\tensor m_{\kappa})\left[u_{13}u_{14}\cdot(\i\tensor(\i\tensor\Delta)\a)(a)\right]\\
 & =\lim_{\kappa}(\i\tensor\i\tensor\theta\tensor m_{\kappa})(\i\tensor\i\tensor\Delta)\left[u_{13}(\i\tensor\a)(a)\right]\\
 & =\lim_{\kappa}(\i\tensor\i\tensor(\theta*m_{\kappa}))\left[u_{13}(\i\tensor\a)(a)\right]\\
 & =\lim_{\kappa}(\i\tensor\i\tensor m_{\kappa})\left[u_{13}(\i\tensor\a)(a)\right]=E_{u}(a).
\end{split}
\]
Hence $u_{13}\cdot(\i\tensor\a)(E_{u}(a))=E_{u}(a)\tensor\one$, and
$u$ being a unitary, we obtain $E_{u}(a)\in N_{u}$. On the other
hand, we obviously have $E_{u}(a)=a$ if $a\in N_{u}$. We conclude
that $E_{u}$ is a projection from $C\tensorn N$ onto $N_{u}$. 

The inclusion $\Xi(N_{u})\subseteq\Img P_{u}$ is clear. To prove
that $\Img P_{u}\subseteq\overline{\Xi(N_{u})}$, let $\zeta\in\Img P_{u}$
and $\e>0$ be given. Pick $a\in C\tensorn N$ with $\norm{\zeta-\Xi(a)}\leq\e$.
Then $\zeta-\Xi(E_{u}(a))=P_{u}(\zeta-\Xi(a))$ has norm $\leq\e$
and $E_{u}(a)\in N_{u}$.
\end{proof}

\section{\label{sec:CAP}Complete almost periodicity}

The aim of this section is to give a definition of almost periodic
vectors and operators that is adequate for our setting of actions
of Hopf--von Neumann algebras, and then generalize the compactification
result \citep[Lemma 4.1]{Niculescu_Stroh_Zsido__noncmt_recur}. This
is an essential step for the rest of the paper.

The question of how to define almost periodicity in the quantum setting
is a nontrivial one. We begin by recalling the classical setting of
Niculescu, Str\"{o}h and Zsid\'{o} \citep{Niculescu_Stroh_Zsido__noncmt_recur}
and Zsid\'{o} \citep{Zsido__splitting_noncomm_dyn_sys}. Let $N$
be a von Neumann algebra, $\om\in N_{*}$ a faithful state, $G$ a
locally compact unital semigroup, and $\a=\left(\a_{s}\right)_{s\in G}$
a (weakly continuous) action of $G$ on $N$. We denote the GNS construction
for $(N,\om)$ by $(\H,\Gamma)$. For $s\in G$, the isometry $U_{s}\in B(\H)$
determined by $U_{s}\Gamma(a):=\Gamma(\a_{s}(a))$, $a\in N$, implements
$\a_{s}$ in the sense that $\a_{s}(a)U_{s}=U_{s}a$ for all $a\in N$.
Moreover, $U=\left(U_{s}\right)_{s\in G}$ is a representation of
$G$: $U_{s}U_{t}=U_{st}$ for every $s,t\in G$. In \citep{Niculescu_Stroh_Zsido__noncmt_recur,Zsido__splitting_noncomm_dyn_sys},
a vector $\z\in\H$ is called \emph{almost periodic} if its orbit
$\left\{ U_{s}\z:s\in G\right\} $ is relatively compact in $\H$.
In this setting, however, there are other notions of almost periodicity
\citep{Jacobs_ergodic_thy_1956,deLeeuw_Glicksberg__appl_almst_per_com}.
We summarize the approaches to defining the set of almost periodic
vectors: {\renewcommand{\theenumi}{(\roman{enumi})}\renewcommand{\labelenumi}{\theenumi}
\begin{enumerate}
\item \label{enu:classical_AP_1}the closed linear span of the \emph{unitary
subspaces} (Jacobs \citep{Jacobs_ergodic_thy_1956}, de~Leeuw and
Glicksberg \citep[Definition of $B_p$, p.~75]{deLeeuw_Glicksberg__appl_almst_per_com});
\item \label{enu:classical_AP_2}the \emph{reversible} vectors (Jacobs \citep{Jacobs_ergodic_thy_1956},
de~Leeuw and Glicksberg \citep[Definition of $B_r$, p.~73]{deLeeuw_Glicksberg__appl_almst_per_com});
\item \label{enu:classical_AP_3}the vectors whose orbits are relatively
compact \citep{Niculescu_Stroh_Zsido__noncmt_recur,Zsido__splitting_noncomm_dyn_sys}
(see above).
\end{enumerate}
Approaches} \prettyref{enu:classical_AP_1} and \prettyref{enu:classical_AP_2}
are equivalent by \citep[Theorem 4.10]{deLeeuw_Glicksberg__appl_almst_per_com},
while their equivalence to \prettyref{enu:classical_AP_3} is a consequence
of a result similar to \citep[Lemma 4.1]{Niculescu_Stroh_Zsido__noncmt_recur}
together with the theory of unitary representations of compact groups.
Approaches \prettyref{enu:classical_AP_2} and \prettyref{enu:classical_AP_3},
which rely on the period of the vector, have the clear advantage of
being far more tangible than \prettyref{enu:classical_AP_1} when
testing for almost periodicity.

In the quantum setting, it is desirable to find a definition that
would be an obvious generalization of \prettyref{enu:classical_AP_3}.
Nevertheless, it seems that the only feasible definition in general
is a standard adaptation of \prettyref{enu:classical_AP_1}. In \prettyref{sub:CAP_motivation}
we give some motivation to almost periodicity in the quantum setting.
It is not essential for understanding the rest of the paper, but it
does put things in the right perspective. In \prettyref{sub:CAP_compactification}
we introduce our definition (\ref{def:CAP}) of complete almost periodicity
and prove the fundamental compactification result, \prettyref{thm:compactification_CAP}.
Then we show in \prettyref{cor:CAP_second_def} that under some assumptions,
it is indeed possible to give an equivalent definition of complete
almost periodicity, which is very close to \prettyref{enu:classical_AP_3}.

\subsection{\label{sub:CAP_motivation}Motivation}

The following easy lemma serves as some motivation for a possible
generalization of almost periodicity to the quantum setting.
\begin{lem}
\label{lem:classical_almost_per}In the classical setting of \citep{Niculescu_Stroh_Zsido__noncmt_recur,Zsido__splitting_noncomm_dyn_sys}
(see the introduction to this section), suppose that $\mu$ is a positive
Borel measure on $G$ that is finite on compact sets and nonzero on
open sets. For $\z\in\H$, define $T_{\z}\in B(L^{1}(G,\mu),\H)$
by $T_{\z}(\theta):=\int_{G}\theta(s)U_{s}\z\,\mathrm{d}s$, $\theta\in L^{1}(G,\mu)$.
Then $\z$ is almost periodic $\iff$ $T_{\z}$ is compact.\end{lem}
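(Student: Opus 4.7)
The plan is to prove both implications by bridging through the compactness of the closed absolutely convex hull of the orbit $K:=\overline{\{U_{s}\zeta:s\in G\}}\subseteq\H$. As a preliminary step I would record that $s\mapsto U_{s}\zeta$ is norm-continuous: weak continuity of this map is immediate from the weak continuity of $\a$ via the GNS identity $\langle U_{s}\Gamma(a),\Gamma(b)\rangle=\om(b^{*}\a_{s}(a))$ together with density, and since $\norm{U_{s}\zeta}=\norm{\zeta}$ is constant, weak continuity upgrades to norm continuity.

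For the forward direction, suppose $\zeta$ is almost periodic, so $K$ is compact. By Mazur's theorem the closed absolutely convex hull $\overline{\mathrm{aco}}(K)$ is again compact in $\H$. For $\theta\in L^{1}(G,\mu)$ with $\norm{\theta}_{1}\leq 1$, the Bochner integral $T_{\zeta}(\theta)=\int_{G}\theta(s)U_{s}\zeta\,d\mu(s)$ is well defined, since the integrand is norm-continuous and dominated by $|\theta(s)|\cdot\norm{\zeta}$. Approximating $\theta$ in $L^{1}$-norm by simple functions $\sum_{i}c_{i}\chi_{A_{i}}$ with $\sum_{i}|c_{i}|\mu(A_{i})\leq 1$ and writing each $\int_{A_{i}}U_{s}\zeta\,d\mu(s)$ as $\mu(A_{i})$ times an element of $\overline{\mathrm{co}}(\{U_{s}\zeta:s\in A_{i}\})\subseteq\overline{\mathrm{co}}(K)$, we realize $T_{\zeta}(\theta)$ as a norm-limit of absolutely convex combinations of elements of $K$, so $T_{\zeta}(\theta)\in\overline{\mathrm{aco}}(K)$. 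Thus $T_{\zeta}$ maps the unit ball of $L^{1}(G,\mu)$ into the compact set $\overline{\mathrm{aco}}(K)$, hence $T_{\zeta}$ is compact.

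For the reverse direction, assuming $T_{\zeta}$ is compact, it suffices to show that every $U_{s}\zeta$ lies in the norm-closure of $T_{\zeta}(\{\theta:\norm{\theta}_{1}\leq 1\})$. Fix $s\in G$ and choose a neighborhood basis $(V_{i})$ of $s$ consisting of relatively compact open sets, so that $0<\mu(V_{i})<\infty$ by the hypotheses on $\mu$; set $\theta_{i}:=\mu(V_{i})^{-1}\chi_{V_{i}}$, a unit vector in $L^{1}(G,\mu)$. By the preliminary norm-continuity, the averages $T_{\zeta}(\theta_{i})=\mu(V_{i})^{-1}\int_{V_{i}}U_{r}\zeta\,d\mu(r)$ converge in norm to $U_{s}\zeta$. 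Consequently $\{U_{s}\zeta:s\in G\}$ is contained in the compact set $\overline{T_{\zeta}(\{\theta:\norm{\theta}_{1}\leq 1\})}$, which gives almost periodicity. I do not expect a serious obstacle here: the only mildly delicate step is the Bochner integration and the containment $T_{\zeta}(\theta)\in\overline{\mathrm{aco}}(K)$ in the forward direction, and both become routine once norm continuity of $s\mapsto U_{s}\zeta$ has been recorded.
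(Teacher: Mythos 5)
Your argument is correct and is essentially the paper's own proof, written out in full detail: the paper likewise invokes Mazur's theorem, observes that $T_{\z}(\theta)\in\cco\left\{ U_{s}\z:s\in G\right\}$ for normalized positive $\theta$, and that each $U_{s}\z$ lies in the closure of $\left\{ T_{\z}(\theta):0\leq\theta,\norm{\theta}_{1}=1\right\}$. The only differences are presentational --- you make explicit the norm continuity of $s\mapsto U_{s}\z$ and pass to the closed absolutely convex hull to handle the full unit ball of $L^{1}(G,\mu)$, steps the paper leaves implicit.
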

\begin{proof}
Let $\z\in\H$. By Mazur's theorem \citep[Theorem V.2.6]{DS1}, $\z$
is almost periodic if and only if $\cco\left\{ U_{s}\zeta:s\in G\right\} $
is compact. For every $\theta\in L^{1}(G)$ with $\theta\geq0$ and
$\left\Vert \theta\right\Vert _{1}=1$, we have $T_{\z}(\theta)\in\cco\left\{ U_{s}\z:s\in G\right\} $.
Moreover, $U_{s}\zeta\in\overline{\{T_{\z}(\theta):0\leq\theta\in\Lone G,\norm{\theta}_{1}=1\}}$
for all $s\in G$. Hence the assertion follows.
\end{proof}
As normally happens when one moves from the classical to the quantum
setting, Banach space (``commutative'') notions are replaced by
their operator space (``noncommutative'') counterparts.
\begin{defn}[\citep{Runde__CAP_func}]
Let $E,F$ be operator spaces and $T\in\CB(E,F)$. We say that $T$
is \emph{completely compact} if for every $\e>0$ there exists a finite-dimensional
subspace $F_{\e}$ of $F$ such that $\cbnorm{Q_{F_{\e}}T}<\e$, where
$Q_{F_{\e}}:F\to F/F_{\e}$ is the quotient map. The space of these
operators is denoted by $\CK(E,F)$.
\end{defn}
Complete compactness implies compactness, and the two notions agree
when $E$ has the maximal operator space structure (e.g., in the classical
setting). The space $\CK(E,F)$ contains the cb-closure of the finite-rank
operators from $E$ to $F$, that is, the space of all operators in
$\CB(E,F)$ that may be viewed as elements of the injective tensor
product $E^{*}\tensori F$ via its completely isometric embedding
in $\CB(E,F)$ \citep[Proposition 8.1.2]{Effros_Ruan__book}. These
two spaces actually coincide when $F$ is a dual operator space and
$E^{*},F$ are injective \citep[Proposition 1.6]{Runde__CAP_func}.
Returning to our setting, we consider the operators $T_{\z}\in\CB(\Lone{\G},\H_{c})$,
$\z\in\H$ (see \prettyref{sec:prelim_notation}). If $\G$ is a LCQG,
then co-amenability implies that $\Linfty{\G}=\Lone{\G}^{*}$ is injective
\citep{Bedos_Tuset_2003}. All the foregoing suggests that the quantum
version of almost periodicity of a vector $\z\in\H$ should be based
on $\tilde{\a}(\z)=T_{\z}$ belonging to $\H_{c}\tensori\Linfty{\G}$
($\hookrightarrow\H_{c}\tensorn\Linfty{\G}\hookrightarrow\CB(\Lone{\G},\H_{c})$).
\begin{lem}
\label{lem:CAP_injective_tensor_prod}Set $\K:=\left\{ \z\in\H:\tilde{\a}(\z)\in\H_{c}\tensori\Linfty{\G}\right\} $.
Then $\K$ is a closed subspace of $\H$, and we have $\tilde{\a}(\K)\subseteq\K_{c}\tensori\Linfty{\G}$
and $\tilde{\a}(\overline{\Gamma\Gamma^{-1}(\K)})\subseteq\overline{\Gamma\Gamma^{-1}(\K)}_{c}\tensori\Linfty{\G}$.\end{lem}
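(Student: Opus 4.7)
The first claim is essentially formal: $\tilde{\a}$ is completely contractive, and therefore norm-continuous, as a map $\H_c\to\H_c\tensorn\Linfty{\G}$, while $\H_c\tensori\Linfty{\G}$ embeds as a norm-closed subspace of $\H_c\tensorn\Linfty{\G}$. Hence $\K=\tilde{\a}^{-1}(\H_c\tensori\Linfty{\G})$ is closed in $\H$.

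The linchpin of the rest of the proof is the identification
\[
\H_c\tensori\Linfty{\G}\;=\;\CK(\Lone{\G},\H_c)
\]
inside $\CB(\Lone{\G},\H_c)$. Under the standing hypothesis, $\G$ is (left) co-amenable, so $\Linfty{\G}$ is injective as an operator space; together with the facts that $\H_c$ is injective and a dual operator space, this puts us in the setting of \citep[Proposition~1.6]{Runde__CAP_func}, which asserts that in this situation the completely compact operators coincide with the cb-closure of the finite-rank operators. Thus $\z\in\K$ if and only if $T_\z$ is completely compact.

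For the second inclusion, fix $\z\in\K$ and $\theta\in\Lone{\G}$. From \prettyref{rem:alpha_tilde} we read off $T_{T_\z(\theta)}(\vartheta)=T_\z(\vartheta*\theta)$, i.e.\ $T_{T_\z(\theta)}=T_\z\circ R_\theta$, where $R_\theta:\Lone{\G}\to\Lone{\G}$ denotes right convolution by $\theta$. Since $\Lone{\G}$ is a completely contractive Banach algebra via $\Delta_*$, the map $R_\theta$ is completely bounded with $\cbnorm{R_\theta}\le\norm{\theta}$. The class $\CK$ being a two-sided ideal under composition with cb maps, $T_{T_\z(\theta)}$ is completely compact, which means $T_\z(\theta)\in\K$. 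Thus $T_\z$ takes values in $\K$. Post-composing $T_\z$ with the orthogonal projection $P:\H_c\to\K_c$ — a complete contraction, as left multiplication by the Hilbert-space contraction $P$ — and invoking the ideal property once more yields $T_\z\in\CK(\Lone{\G},\K_c)=\K_c\tensori\Linfty{\G}$, again by \citep[Proposition~1.6]{Runde__CAP_func} (applied to the dual and injective operator space $\K_c$).

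For the third inclusion, by norm-continuity of $\tilde{\a}$ and norm-closedness of $\overline{\Gamma\Gamma^{-1}(\K)}_c\tensori\Linfty{\G}$, it suffices to verify the inclusion on vectors of the form $\z=\Gamma(a)$ with $a\in\Gamma^{-1}(\K)$. For every $\theta\in\Lone{\G}$,
\[
T_{\Gamma(a)}(\theta)\;=\;(\i\tensor\theta)\tilde{\a}(\Gamma(a))\;=\;\Gamma\bigl((\i\tensor\theta)\a(a)\bigr),
\]
which lies in $\K$ by the second inclusion; hence $(\i\tensor\theta)\a(a)\in\Gamma^{-1}(\K)$ and $T_{\Gamma(a)}(\theta)\in\Gamma\Gamma^{-1}(\K)$. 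Repeating the complete-compactness argument with $\overline{\Gamma\Gamma^{-1}(\K)}$ in place of $\K$ gives $T_{\Gamma(a)}\in\overline{\Gamma\Gamma^{-1}(\K)}_c\tensori\Linfty{\G}$. The main work in the whole lemma is really the identification of $\H_c\tensori\Linfty{\G}$ with $\CK(\Lone{\G},\H_c)$ — this is the step that consumes the co-amenability hypothesis, and the only one requiring an external input; once it is in hand, everything else is a formal consequence of coassociativity of $\Delta$ (encoded in \prettyref{rem:alpha_tilde}) together with the ideal structure of $\CK$.
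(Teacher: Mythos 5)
Your argument has the right skeleton --- it is, in essence, the paper's proof: write $T_{T_{\z}(\vartheta)}=T_{\z}\circ S$ with $S$ the completely bounded right-convolution operator on $\Lone{\G}$, deduce $\Img T_{\z}\subseteq\K$, and then compress by the orthogonal projection onto $\K$. The gap is in what you call the linchpin: the identification $\H_{c}\tensori\Linfty{\G}=\CK(\Lone{\G},\H_{c})$ requires, via \citep[Proposition 1.6]{Runde__CAP_func}, that $\Linfty{\G}$ be injective, and you derive injectivity from co-amenability. That implication is only available when $\G$ is a locally compact quantum group (co-amenability of $\G$ forces amenability of the dual, hence injectivity of $\Linfty{\G}$); the paper itself invokes it only under the explicit proviso ``if $\G$ is a LCQG''. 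The lemma, however, is stated under the standing hypothesis, where $\G$ is merely an amenable, co-amenable Hopf--von Neumann algebra: there is no dual quantum group and no reason for $\Linfty{\G}$ to be injective. Without injectivity one only has the one-sided inclusion $\H_{c}\tensori\Linfty{\G}\subseteq\CK(\Lone{\G},\H_{c})$, and your step ``$T_{T_{\z}(\theta)}$ is completely compact, which means $T_{\z}(\theta)\in\K$'' uses the reverse inclusion, so it does not follow in the stated generality.

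The repair is to drop the detour through $\CK$ altogether, which is what the paper does. The image of $\H_{c}\tensori\Linfty{\G}$ in $\CB(\Lone{\G},\H_{c})$ is precisely the cb-norm closure of the finite-rank maps, and that subspace is stable under pre-composition with any $S\in\CB(\Lone{\G})$ and under post-composition with any complete contraction $\H_{c}\to\K_{c}$: finite-rank maps are preserved, and both compositions are cb-norm continuous. This ideal property of $E^{*}\tensori F$ itself is all your argument needs, and it is free of any injectivity hypothesis. The remaining parts of your proposal --- the closedness of $\K$, the reduction of the third inclusion to vectors of the form $\Gamma(a)$, and the identity $T_{\Gamma(a)}(\theta)=\Gamma\bigl((\i\tensor\theta)\a(a)\bigr)$ --- are correct and coincide with the paper's reasoning.
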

\begin{proof}
That $\K$ is a closed subspace of $\H$ follows easily from \prettyref{lem:alpha_tilde}.
To show that $\tilde{\a}(\K)\subseteq\K_{c}\tensori\Linfty{\G}$,
let $\z\in\K$. Since $\tilde{\a}(\zeta)=T_{\zeta}\in\H_{c}\tensori\Linfty{\G}$,
we need to prove that $\Img T_{\z}\subseteq\K$; indeed, this is enough
because of the existence of the (orthogonal, thus completely contractive)
projection from $\H$ onto $\K$. Fix $\vartheta\in\Lone{\G}$. For
every $\theta\in\Lone{\G}$ we have $T_{T_{\z}(\vartheta)}(\theta)=T_{\z}(\theta*\vartheta)$
(\prettyref{rem:alpha_tilde}), so letting $S:\Lone{\G}\to\Lone{\G}$
be given by $\theta\mapsto\theta*\vartheta$, we conclude that $T_{T_{\z}(\vartheta)}=T_{\zeta}S$.
Since $T_{\z}\in\H_{c}\tensori\Linfty{\G}$ and $S\in\CB(\Lone{\G})$,
we have $T_{T_{\z}(\vartheta)}\in\H_{c}\tensori\Linfty{\G}$, i.e.,
$T_{\z}(\vartheta)\in\K$. The last assertion follows similarly as
$T_{\Gamma(a)}(\theta)=\Gamma\bigl((\i\tensor\theta)\a(a)\bigr)$
for all $a\in N$ and $\theta\in\Lone{\G}$.\end{proof}
\begin{cor}
For every $a\in\Gamma^{-1}(\K)$ we have $\a(a)\in\Gamma^{-1}(\K)\tensorf\Linfty{\G}$,
where $\tensorf$ stands for the normal Fubini tensor product of dual
operator spaces.\end{cor}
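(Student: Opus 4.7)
The plan is to verify directly the defining slice-map characterization of the normal Fubini tensor product of dual operator spaces: for a $\sigma$-weakly closed subspace $X \subseteq N$, an element $u$ of $N \tensorn \Linfty{\G}$ lies in $X \tensorf \Linfty{\G}$ if and only if $(\i \tensor \theta)(u) \in X$ for every $\theta \in \Lone{\G}$, and $(\phi \tensor \i)(u) \in \Linfty{\G}$ for every $\phi \in N_*$ (the latter being automatic, as $u$ already lies in $N \tensorn \Linfty{\G}$).

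The first step is to check that $\Gamma^{-1}(\K)$ is $\sigma$-weakly closed in $N$, so that $\Gamma^{-1}(\K) \tensorf \Linfty{\G}$ is a well-defined dual operator subspace of $N \tensorn \Linfty{\G}$. Since $\om$ is normal, the GNS map $\Gamma : N \to \H$ is bounded and the functionals $a \mapsto \left\langle \Gamma(a), \xi \right\rangle$ ($\xi \in \H$) are normal on $N$; hence $\Gamma$ is $\sigma$-weak to weak continuous. Because $\K$ is a norm-closed (hence weakly closed, being convex) subspace of $\H$, its preimage $\Gamma^{-1}(\K)$ is $\sigma$-weakly closed in $N$.

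Now fix $a \in \Gamma^{-1}(\K)$ and $\theta \in \Lone{\G}$. The key identity is
\[
\Gamma\bigl((\i \tensor \theta)\a(a)\bigr) \;=\; (\i \tensor \theta)\bigl[(\Gamma \tensor \i)\a(a)\bigr] \;=\; (\i \tensor \theta)\tilde{\a}(\Gamma(a)) \;=\; T_{\Gamma(a)}(\theta),
\]
where the first equality follows from agreement on elementary tensors together with $\sigma$-weak to weak continuity of both sides, and the second from $\tilde{\a} \circ \Gamma = (\Gamma \tensor \i)\circ\a$ in \prettyref{lem:alpha_tilde}. Since $\Gamma(a) \in \K$, the argument in the proof of \prettyref{lem:CAP_injective_tensor_prod} (namely that $T_{T_{\z}(\vartheta)} = T_{\z}S$ with $S \in \CB(\Lone{\G})$, so $\Img T_{\z} \subseteq \K$ whenever $\z \in \K$) yields $T_{\Gamma(a)}(\theta) \in \K$. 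Thus $(\i \tensor \theta)\a(a) \in \Gamma^{-1}(\K)$, and the slice-map characterization delivers $\a(a) \in \Gamma^{-1}(\K) \tensorf \Linfty{\G}$.

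I do not expect a substantive obstacle. The only delicate point is the $\sigma$-weak closedness of $\Gamma^{-1}(\K)$, which is a standard consequence of normality of $\om$, while the essential slice computation simply repackages the identity $T_{T_{\z}(\vartheta)} = T_{\z}(\,\cdot\,*\vartheta)$ already established in \prettyref{rem:alpha_tilde} and exploited in the preceding lemma.
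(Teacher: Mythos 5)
Your proof is correct and takes essentially the approach the paper intends: the corollary is left without proof because, via the slice-map characterization of $\tensorf$, it is a direct restatement of the final assertion of \prettyref{lem:CAP_injective_tensor_prod}, whose proof already records the identity $T_{\Gamma(a)}(\theta)=\Gamma\bigl((\i\tensor\theta)\a(a)\bigr)$ and the inclusion $\Img T_{\z}\subseteq\K$ for $\z\in\K$. Your extra verification that $\Gamma^{-1}(\K)$ is $\sigma$-weakly closed (so that $\Gamma^{-1}(\K)\tensorf\Linfty{\G}$ makes sense) is correct and is exactly the bookkeeping the paper leaves implicit.
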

\begin{lem}
\label{lem:E_tensor_H_approx}Let $E$ be an operator space and $\K$
be a Hilbert space. Let $x\in\K_{c}\tensori E$, and define $E_{x}:=\clinspan\left\{ (\rho^{*}\tensor\i)x:\rho\in\K\right\} $.
Then $x\in\K_{c}\tensori E_{x}$.
\end{lem}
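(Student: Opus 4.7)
The plan is to approximate $x$ in cb-norm by elements that visibly lie in $\K_{c}\tensori E_{x}$, using finite-rank truncations of the ``first leg''. For any finite-dimensional subspace $\K_{0}\subseteq\K$ with orthogonal projection $p\in B(\K)$ onto $\K_{0}$, the standard identification $\CB(\K_{c},\K_{c})=B(\K)$ gives $\cbnorm{p}=1$, so by functoriality of $\tensori$, the map $p\tensor\i_{E}$ extends to a complete contraction on $\K_{c}\tensori E$.

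First I would observe that $(p\tensor\i_{E})(x)$ lies in $(\K_{0})_{c}\tensori E$, which for finite-dimensional $\K_{0}$ equals the algebraic tensor product $(\K_{0})_{c}\odot E$. Expanding in an orthonormal basis $\{e_{i}\}$ of $\K_{0}$, write $(p\tensor\i_{E})(x)=\sum_{i}e_{i}\otimes a_{i}$ with $a_{i}\in E$. Applying the slice $e_{j}^{*}\tensor\i_{E}$ and using $p^{*}(e_{j}^{*})=e_{j}^{*}$, one finds $a_{j}=(e_{j}^{*}\tensor\i_{E})(x)\in E_{x}$ by the very definition of $E_{x}$. Hence $(p\tensor\i_{E})(x)\in\K_{c}\odot E_{x}\subseteq\K_{c}\tensori E_{x}$, the latter being a closed subspace of $\K_{c}\tensori E$ by injectivity of $\tensori$.

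Second, I would show that for an appropriate sequence of finite-rank projections, $(p\tensor\i_{E})(x)\to x$ in cb-norm. Since $\K_{c}\odot E$ is cb-dense in $\K_{c}\tensori E$ by the very construction of the injective tensor product, pick a sequence $x_{n}=\sum_{i=1}^{k_{n}}\eta_{i}^{n}\otimes b_{i}^{n}\in\K_{c}\odot E$ with $x_{n}\to x$, and let $p_{n}$ be the orthogonal projection onto $\linspan\{\eta_{i}^{n}\}_{i=1}^{k_{n}}$. Then $(p_{n}\tensor\i_{E})(x_{n})=x_{n}$, and the complete contractivity of $p_{n}\tensor\i_{E}$ yields
\[
\cbnorm{(p_{n}\tensor\i_{E})(x)-x}\leq\cbnorm{(p_{n}\tensor\i_{E})(x-x_{n})}+\cbnorm{x_{n}-x}\leq2\cbnorm{x-x_{n}}\to0.
\]
Combining the two paragraphs, $x$ is the cb-norm limit of the elements $(p_{n}\tensor\i_{E})(x)\in\K_{c}\tensori E_{x}$, a closed subspace of $\K_{c}\tensori E$, so $x\in\K_{c}\tensori E_{x}$.

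I do not anticipate any serious obstacle; the argument is a clean finite-dimensional truncation. The only points that require (routine) care are the complete contractivity of $p_{n}\tensor\i_{E}$ on $\K_{c}\tensori E$ and the cb-density of $\K_{c}\odot E$ in $\K_{c}\tensori E$, both of which are standard features of the operator space injective tensor product from \citep{Effros_Ruan__book}.
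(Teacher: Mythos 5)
Your argument is correct: the finite-rank truncation $(p_n\tensor\i_E)(x)$, the identification of its slices with $(e_j^{*}\tensor\i)x\in E_x$, and the approximation estimate via complete contractivity of $p_n\tensor\i_E$ together with the density of $\K_c\odot E$ are all sound, and the closedness of $\K_c\tensori E_x$ inside $\K_c\tensori E$ follows from injectivity of $\tensori$ as you say. The paper explicitly leaves this proof to the reader as elementary, and yours is exactly the standard argument one would expect.
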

The elementary proof is left to the reader.
\begin{prop}
\label{prop:comult_min_tensor}Set again $\K:=\left\{ \z\in\H:\tilde{\a}(\z)\in\H_{c}\tensori\Linfty{\G}\right\} $,
and let $\Aa$ be the unital $C^{*}$-subalgebra of $\Linfty{\G}$
generated by $\left\{ (\eta^{*}\tensor\i)\tilde{\a}(\z):\z\in\K,\eta\in\H\right\} $.
Then $\Delta$ restricts to a $C^{*}$-algebraic co-multiplication
$\Aa\to\Aa\tensormin\Aa$. In the case that $\G$ is a LCQG, $\Aa$
is a $C^{*}$-subalgebra of $M(C_{0}(\G))$.\end{prop}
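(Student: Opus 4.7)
The plan is to first show $\Delta(\Aa) \subseteq \Aa \tensormin \Aa$ by checking the inclusion on the generators of $\Aa$ and then bootstrapping by the $*$-algebra operations and norm continuity of $\Delta$. For a generator $x = (\eta^* \tensor \i)\tilde{\a}(\z)$ with $\z \in \K$ and $\eta \in \H$, the co-representation identity $(\i \tensor \Delta)\tilde{\a} = (\tilde{\a} \tensor \i)\tilde{\a}$ from \prettyref{lem:alpha_tilde} gives
\[
\Delta(x) = (\eta^* \tensor \i \tensor \i)(\i \tensor \Delta)\tilde{\a}(\z) = (\eta^* \tensor \i \tensor \i)(\tilde{\a} \tensor \i)\tilde{\a}(\z),
\]
which is the main identity to analyze.

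Next, I would combine \prettyref{lem:CAP_injective_tensor_prod} with \prettyref{lem:E_tensor_H_approx} to refine $\tilde{\a}(\z) \in \K_c \tensori \Linfty{\G}$ to $\tilde{\a}(\z) \in \K_c \tensori E_{\tilde{\a}(\z)}$, where $E_{\tilde{\a}(\z)} = \clinspan\{(\rho^* \tensor \i)\tilde{\a}(\z) : \rho \in \K\}$ sits inside $\Aa$ by the very definition of $\Aa$. Consider the completely bounded slicing map $\Phi_\eta : \K_c \to \Aa$, $\xi \mapsto (\eta^* \tensor \i)\tilde{\a}(\xi)$, whose complete boundedness follows from \prettyref{lem:alpha_tilde} and the cb-norm of slicing by $\eta^*$. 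A direct check yields $\Delta(x) = (\Phi_\eta \tensor \i)\tilde{\a}(\z)$, so functoriality of the injective tensor product places $\Delta(x) \in \Aa \tensori E_{\tilde{\a}(\z)} \subseteq \Aa \tensori \Aa$. Since the operator space injective tensor product of two $C^*$-algebras coincides with their minimal $C^*$-tensor product, this puts $\Delta(x)$ in $\Aa \tensormin \Aa$.

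With the generators handled, I would extend to all of $\Aa$ using that $\Delta$ is a $*$-homomorphism and that $\Aa \tensormin \Aa$ is a norm-closed $*$-subalgebra of $\Linfty{\G} \tensorn \Linfty{\G}$ (via the spatial identification of $\tensormin$). Unitality $\Delta(\one) = \one \tensor \one$, the $*$-homomorphism property, and co-associativity $(\Delta \tensor \i)\Delta = (\i \tensor \Delta)\Delta$ all restrict directly from $\Delta$ on $\Linfty{\G}$, so $\Delta|_\Aa$ is a $C^*$-algebraic co-multiplication in the sense of \prettyref{def:CQG}.

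For the final assertion, suppose $\G$ is a LCQG. The plan is to invoke the fact that slices $(\omega \tensor \i)(U)$ of the implementing co-representation $U$ by vector functionals $\omega = \langle \eta, \cdot\, \z \rangle$ on $B(\H)$ lie in $M(C_0(\G))$, which places each generator $(\eta^* \tensor \i)\tilde{\a}(\z) \in M(C_0(\G))$; since $M(C_0(\G))$ is a norm-closed $*$-subalgebra of $\Linfty{\G}$, it then contains $\Aa$. I expect this last step to be the main technical hurdle: $U$ is only an isometry here, not necessarily a unitary, and locating its slices in the multiplier algebra requires invoking the $C^*$-algebraic machinery of LCQGs (multiplicative unitaries and regularity of co-representations, in the spirit of Kustermans--Vaes) rather than the purely Hopf--von Neumann arguments used in the earlier parts of the proof.
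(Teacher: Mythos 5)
Your proposal is correct and follows essentially the same route as the paper's proof: the identity $(\i\tensor\Delta)\tilde{\a}=(\tilde{\a}\tensor\i)\tilde{\a}$ applied to generators, the refinement $\tilde{\a}(\K)\subseteq\K_{c}\tensori\Aa$ via Lemmas \ref{lem:CAP_injective_tensor_prod} and \ref{lem:E_tensor_H_approx}, slicing into $\Aa\tensori\Aa=\Aa\tensormin\Aa$, and, for the LCQG case, the fact (from the Brannan--Daws--Samei regularity results the paper cites) that the slices of the co-representation $U$ lie in $M(C_{0}(\G))$. Your explicit map $\Phi_{\eta}$ and the worry about $U$ being merely isometric are just slightly more detailed renderings of steps the paper handles by citation.
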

\begin{proof}
It follows from Lemmas \ref{lem:CAP_injective_tensor_prod} and \ref{lem:E_tensor_H_approx}
that $\tilde{\a}(\K)\subseteq\K_{c}\tensori\Aa$. Using the formula
$(\i\tensor\Delta)\tilde{\a}=(\tilde{\a}\tensor\i)\tilde{\a}$ of
\prettyref{lem:alpha_tilde} we have for $\z\in\K,\eta\in\H$,
\begin{equation}
\Delta((\eta^{*}\tensor\i)\tilde{\a}(\z))=(\eta^{*}\tensor\i\tensor\i)(\i\tensor\Delta)\tilde{\a}(\z)=(\eta^{*}\tensor\i\tensor\i)(\tilde{\a}\tensor\i)\tilde{\a}(\z)\in\Aa\tensormin\Aa,\label{eq:compactification_CAP}
\end{equation}
since for $C^{*}$-algebras, the minimal and the operator space injective
tensor products coincide. Consequently, $\Delta(\Aa)\subseteq\Aa\tensormin\Aa$.
In the case that $\G$ is a LCQG, we have $U\in M(\mathbb{K}(\H)\tensormin C_{0}(\G))$
because $U$ is a co-representation of $\G$ (this is folklore; see,
e.g., \citep{Brannan_Daws_Samei__cb_rep_of_conv_alg_of_LCQGs}, Corollary
4.12 and the proof of Theorem 4.9), and since $(\eta^{*}\tensor\i)\tilde{\a}(\z)=(\om_{\z,\eta}\tensor\i)(U)$
for all $\z,\eta\in\H$, we obtain that $\Aa\subseteq M(C_{0}(\G))$.
\end{proof}
The pair $(\Aa,\Delta|_{\Aa})$ is, therefore, a compact quantum semigroup
(which is a compactification of $\G$), but normally not a compact
quantum group.

\subsection{\label{sub:CAP_compactification}The compactification}

The previous discussion indicates that \emph{optimally}, we would
say that $\z\in\H$ is almost periodic in the quantum setting if $\tilde{\a}(\z)\in\H_{c}\tensori\Linfty{\G}$.
Nonetheless, this definition is too weak for the development of the
rest of the theory, and we now introduce a more restrictive one, as
follows. It is the noncommutative version of approach \prettyref{enu:classical_AP_1}
above (also compare So{\ldash}tan \citep{Soltan__quantum_Bohr_comp}
and Woronowicz \citep{Woronowicz__remark_on_CQGs}).
\begin{defn}
\label{def:CAP}The set $\H^{\mathrm{CP}}$ of \emph{completely periodic}
\emph{vectors} consists of all $\z\in\H$ with the following property:
there exists a finite-dimensional sub-representation $u$ of $U$
on a space that contains $\z$, such that both $u$ and $u^{\mathrm{t}}$
are invertible.%
\footnote{Here $u^{\mathrm{t}}$ is the transpose of $u$, with respect to some
basis of the space.%
}

The \emph{completely almost periodic} \emph{vectors} are the elements
of $\H^{\CAP}:=\clinspan\H^{\mathrm{CP}}$, and the \emph{completely
almost periodic operators} are the elements of $N^{\CAP}:=\Gamma^{-1}(\H^{\CAP})$.
\end{defn}
We will give in \prettyref{cor:CAP_second_def} another characterization
of $\H^{\CAP}$ under additional assumptions.
\begin{rem}
The set $N^{\CAP}$ is a weakly closed subspace of $N$.
\end{rem}
The main objective of this paper is to address the following questions:{\renewcommand{\theenumi}{\Alph{enumi}}\renewcommand{\labelenumi}{(\theenumi)}
\begin{enumerate}
\item \label{enu:question_1}Is $N^{\CAP}$ a von Neumann algebra?
\item \label{enu:question_2}Is $\Gamma(N^{\CAP})$ dense in $\H^{\CAP}$?
\item \label{enu:question_3}Is $N^{\CAP}$ globally invariant under the
modular automorphism group $\sigma^{\om}$ of $\om$?
\end{enumerate}
}
\begin{defn}
Let $\G$ be a Hopf--von Neumann algebra. A compact quantum group
$\HH=(C(\HH),\Delta_{\HH})$ is a \emph{compactification} of $\G$
if $C(\HH)\subseteq\Linfty{\G}$ and $\Delta_{\HH}=\Delta|_{C(\HH)}$.
If $\G$ is a LCQG, we further require that $C(\HH)\subseteq M(C_{0}(\G))$.\end{defn}
\begin{rem}
If $\G$ is co-amenable, then so is $\HH$, because if $\left(\epsilon_{\l}\right)$
is a bounded left approximate identity of $\G$, then any cluster
point of $\left(\epsilon_{\l}|_{C(\HH)}\right)$ in $C(\HH)^{*}$
is a co-unit of $\HH$ (cf.~B\'edos and Tuset \citep[Theorem 3.1]{Bedos_Tuset_2003}). \end{rem}
\begin{thm}
\label{thm:compactification_CAP}There exists a co-amenable compactification
$\HH=(C(\HH),\Delta_{\HH})$ of $\G$ such that $U|_{\H^{\CAP}\tensor L^{2}(\G)}$
is a unitary co-representation of $\HH$ on $\H^{\CAP}$ in the $C^{*}$-algebraic
sense. To elaborate, $C(\HH)$ is the unital $C^{*}$-subalgebra of
$\Linfty{\G}$ generated by $\left\{ (\om_{\z,\eta}\tensor\i)(U):\z\in\H^{\CAP},\eta\in\H\right\} $
and $\Delta_{\HH}$ is the restriction of $\Delta$ to $C(\HH)$.\end{thm}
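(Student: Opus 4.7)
The plan is to identify $C(\HH)$, as defined in the statement, with the $C^{*}$-closure of the $*$-algebra $\Aa$ spanned by matrix coefficients of all finite-dimensional unitary sub-representations $u$ of $U$ for which $u^{\mathrm{t}}$ is also invertible, and then to invoke Woronowicz's theory: such matrix coefficients generate a CQG algebra whose norm closure is a compact quantum group. As a preliminary, I would verify that $\H^{\CAP}$ is $U$-invariant: each $\z\in\H^{\mathrm{CP}}$ lies in a finite-dimensional $U$-invariant subspace $\K\subseteq\H^{\CAP}$ carrying a sub-representation $u\in B(\K)\tensorn\Linfty{\G}$ of $U$, and $\H^{\CAP}$ is the closed span of such $\K$'s. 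Being a restriction of the isometry $U$, each $u$ is an isometry; combined with the assumed invertibility this forces $u$ to be unitary. Passing to the closed span, $U|_{\H^{\CAP}\tensor L^{2}(\G)}$ is a surjective isometry, hence a unitary co-representation of $\G$ on $\H^{\CAP}$.

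Next, I would identify $C(\HH)$ with $\overline{\Aa}$. If $\z\in\H^{\mathrm{CP}}$ lies in a sub-rep $u$ on $\K$ and $p$ denotes the projection onto $\K$, then $U$-invariance of $\K\tensor L^{2}(\G)$ yields $(\om_{\z,\eta}\tensor\i)(U)=(\om_{\z,p\eta}\tensor\i)(U)$; expanding $p\eta$ in an ONB of $\K$ exhibits this as a linear combination of matrix coefficients $u_{ij}$ of $u$. Hence $C(\HH)$ is the $C^{*}$-algebra generated by all such $u_{ij}$; in particular it is automatically $*$-closed. The co-representation identity $(\i\tensor\Delta)(U)=U_{12}U_{13}$ restricted to $\K$ gives $\Delta(u_{ij})=\sum_{k}u_{ik}\tensor u_{kj}$, whence $\Delta(C(\HH))\subseteq C(\HH)\tensormin C(\HH)$ by continuity.

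Then I would promote $\Aa$ to a CQG Hopf $*$-algebra. Unitarity of $u$ gives $u_{ij}^{*}=(u^{-1})_{ji}$, and the invertibility of $u^{\mathrm{t}}$ furnishes a contragredient co-representation whose matrix coefficients still lie in $C(\HH)$. Defining $S(u_{ij}):=(u^{-1})_{ji}$ together with the natural counit makes $(\Aa,\Delta|_{\Aa})$ a CQG algebra in the sense of Dijkhuizen--Koornwinder, so Woronowicz's theorem yields the compact quantum group structure on $(C(\HH),\Delta|_{C(\HH)})$: the density conditions of \prettyref{def:CQG} hold on $\Aa$ and pass to the norm closure. Co-amenability of $\HH$ is then immediate from the remark preceding the statement.

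The main obstacle is the last step---producing the antipode on $\Aa$. The delicate point is verifying that the invertibility of $u^{\mathrm{t}}$, the condition built into \prettyref{def:CAP}, forces the contragredient of each admissible sub-rep $u$ of $U$ to be realized, up to matrix coefficients, within our class; this is essentially the standard Woronowicz criterion for a finite-dimensional co-representation to belong to a compact quantum group, but its application here requires some care since the ambient structure is only a Hopf--von Neumann algebra rather than a bona fide locally compact quantum group.
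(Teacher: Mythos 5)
Your construction of $\HH$ follows the paper's route: $C(\HH)$ is the unital $C^{*}$-algebra generated by the matrix elements of finite-dimensional sub-representations $u$ of $U$ with $u$ and $u^{\mathrm{t}}$ invertible, $\Delta$ restricts to a $C^{*}$-algebraic co-multiplication via $\Delta(u_{ij})=\sum_{k}u_{ik}\tensor u_{kj}$, and the compact quantum group structure is exactly the Woronowicz/Maes--Van Daele criterion. The paper simply cites \citep[Proposition 3.8]{Maes_van_Daele__notes_CQGs} at this point rather than re-deriving a CQG-algebra structure; your detour through an explicit antipode and counit is essentially the content of that proposition's proof, and the ``delicate point'' you flag at the end is precisely what that citation discharges. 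Your observations that each sub-representation $u$ from \prettyref{def:CAP} is an isometry (hence unitary once invertible) and that $V:=U|_{\H^{\CAP}\tensor L^{2}(\G)}$ is a surjective isometry, hence unitary, are correct and match the paper.

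The gap is in the second assertion of the theorem. ``Unitary co-representation of $\HH$ on $\H^{\CAP}$ in the $C^{*}$-algebraic sense'' means $V\in M(\mathbb{K}(\H^{\CAP})\tensormin C(\HH))$, not merely that $V$ is a unitary in $B(\H^{\CAP})\tensorn\Linfty{\G}$ whose slices $(\om_{\z,\eta}\tensor\i)(V)$ lie in $C(\HH)$; the latter does not imply the former. Half of the paper's proof is devoted to this point: one checks that $V\left((\z\tensor\eta^{*})\tensor x\right)\in\mathbb{K}(\H^{\CAP})\odot C(\HH)$ using $\tilde{\a}(\H^{\mathrm{CP}})\subseteq\H^{\mathrm{CP}}\odot C(\HH)$, and that $\left((\eta\tensor\z^{*})\tensor x\right)V=\left((\eta\tensor\z^{*})\tensor x\right)u(p_{\H_{1}}\tensor\one)$ lies in $\mathbb{K}(\H^{\CAP})\odot C(\HH)$ using the unitarity of the finite-dimensional block $u$, and then passes to limits. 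Your proposal never addresses this. A second, smaller omission: when $\G$ is a LCQG, the definition of a compactification additionally requires $C(\HH)\subseteq M(C_{0}(\G))$, which the paper obtains from \prettyref{prop:comult_min_tensor}; you do not verify it.
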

\begin{proof}
From \prettyref{def:CAP} it follows that $\Delta(C(\HH))\subseteq C(\HH)\tensormin C(\HH)$
and that $U$ is invariant under $\H^{\CAP}\tensor L^{2}(\G)$. Taking
$\z\in\H^{\mathrm{CP}}$ and letting $u$ be as in \prettyref{def:CAP},
the elements $(\om_{\z,\eta}\tensor\i)(U)=(\eta^{*}\tensor\i)\tilde{\a}(\z)$,
$\eta\in\H$, are just linear combinations of the matrix elements
of $u$. Since the co-representation $u$ and its transpose $u^{\mathrm{t}}$
are invertible and the unital $C^{*}$-algebra $C(\HH)$ is generated
by $\left\{ (\om_{\z,\eta}\tensor\i)(U):\z\in\H^{\mathrm{CP}},\eta\in\H\right\} $,
$\HH$ is a compact quantum group by Maes and Van Daele \citep[Proposition 3.8]{Maes_van_Daele__notes_CQGs}
(see also Woronowicz \citep{Woronowicz__remark_on_CQGs}). If $\G$
is a LCQG, then $C(\HH)\subseteq M(C_{0}(\G))$ by \prettyref{prop:comult_min_tensor}
as $\H^{\CAP}$ is contained in the subspace $\K$ therein.

The restriction $V:=U|_{\H^{\CAP}\tensor L^{2}(\G)}$ is now a co-representation
of $\G$ on $\H^{\CAP}$. By definition, $V$ (which is isometric)
has dense range, so it is unitary. So we need only establish that
$V\in M(\mathbb{K}(\H^{\CAP})\tensormin C(\HH))$. Let $\z\in\H^{\mathrm{CP}}$,
$\eta\in\H^{\CAP}$ and $x\in C(\HH)$. Since $\tilde{\a}(\H^{\mathrm{CP}})\subseteq\H^{\mathrm{CP}}\odot C(\HH)$,
we get $V\left((\z\tensor\eta^{*})\tensor x\right)\in\mathbb{K}(\H^{\CAP})\odot C(\HH)$.
Let $\H_{1}$ be the finite-dimensional subspace associated with $\z$
in \prettyref{def:CAP} and $p_{\H_{1}}$ the projection of $\H^{\CAP}$
onto $\H_{1}$. Since $V$ is invariant under $\H_{1}\tensor L^{2}(\G)$
and $u:=V|_{\H_{1}\tensor L^{2}(\G)}\in B(\H_{1})\odot C(\HH)$ is
unitary, we have $\left((\eta\tensor\z^{*})\tensor x\right)V=\left((\eta\tensor\z^{*})\tensor x\right)u(p_{\H_{1}}\tensor\one)\in\mathbb{K}(\H^{\CAP})\odot C(\HH)$.
The foregoing implies that $V(k\tensor x)$ and $(k\tensor x)V$ belong
to $\mathbb{K}(\H^{\CAP})\tensormin C(\HH)$ for every $k\in\mathbb{K}(\H^{\CAP})$
and $x\in C(\HH)$, hence $V\in M(\mathbb{K}(\H^{\CAP})\tensormin C(\HH))$
as desired.\end{proof}
\begin{cor}
\label{cor:m_faithful_on_H}Every left- (or right-) invariant state
of $\G$ is faithful on $C(\HH)$. \end{cor}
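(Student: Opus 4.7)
The plan is to identify $m|_{C(\HH)}$ with the Haar state $h_{\HH}$ of the compact quantum group $\HH$ (which exists by \prettyref{thm:compactification_CAP}) and then invoke faithfulness of $h_{\HH}$ coming from co-amenability of $\HH$. I handle the left-invariant case; the right-invariant one is symmetric.

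Let $m$ be a left-invariant state of $\G$ and set $m':=m|_{C(\HH)}$, which is a state on $C(\HH)$ since $\one\in C(\HH)$. The first and main task is to show that $m'$ is left-invariant on $\HH$, i.e., $(\phi\tensor m')\Delta_{\HH}(x)=\phi(\one)m'(x)$ for all $\phi\in C(\HH)^{*}$ and $x\in C(\HH)$. For any $\om\in\Lone{\G}$ and $x\in C(\HH)$, we have $\Delta(x)\in C(\HH)\tensormin C(\HH)$ by \prettyref{thm:compactification_CAP}, so $(\om\tensor\i)\Delta(x)\in C(\HH)$ (slice maps preserve the minimal $C^{*}$-tensor product), and left-invariance of $m$ on $\G$ gives
\[
(\om|_{C(\HH)}\tensor m')\Delta_{\HH}(x)=m((\om\tensor\i)\Delta(x))=\om(\one)m(x)=\om|_{C(\HH)}(\one)m'(x).
\]
The restrictions $\{\om|_{C(\HH)}:\om\in\Lone{\G}\}$ form a weak$^{*}$-dense subset of $C(\HH)^{*}$: indeed, $\Lone{\G}$ is weak$^{*}$-dense in $\Linfty{\G}^{*}$ by the bipolar theorem (since $\Lone{\G}$ separates $\Linfty{\G}$), and the restriction map $\Linfty{\G}^{*}\to C(\HH)^{*}$ is weak$^{*}$-continuous and surjective by Hahn--Banach, so the image of a dense subset is dense in the image $C(\HH)^{*}$. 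Since $\phi\mapsto(\phi\tensor m')\Delta_{\HH}(x)-\phi(\one)m'(x)$ is weak$^{*}$-continuous in $\phi$ for each fixed $x$, the identity extends to all $\phi\in C(\HH)^{*}$.

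The standard uniqueness argument for the Haar state now applies: letting $h_{\HH}$ be the bi-invariant Haar state of $\HH$, one computes on the one hand $(m'\tensor h_{\HH})\Delta_{\HH}(x)=m'(x)h_{\HH}(\one)=m'(x)$ using left-invariance of $m'$, and on the other $(m'\tensor h_{\HH})\Delta_{\HH}(x)=m'(\one)h_{\HH}(x)=h_{\HH}(x)$ using right-invariance of $h_{\HH}$; hence $m'=h_{\HH}$. By the remark preceding \prettyref{thm:compactification_CAP}, $\HH$ inherits co-amenability from $\G$, and by B\'edos--Tuset \citep{Bedos_Tuset_2003} the Haar state of any co-amenable compact quantum group is faithful. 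Thus $m|_{C(\HH)}=h_{\HH}$ is faithful. The main subtlety is the weak$^{*}$-density/continuity step that upgrades invariance against normal functionals of $\G$ to invariance against arbitrary functionals on $C(\HH)$; once past this, the compact-quantum-group machinery gives faithfulness essentially for free.
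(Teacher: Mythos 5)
Your proof is correct and takes essentially the same route as the paper: restrict $m$ to $C(\HH)$, show the restriction is an invariant state of the compact quantum group $\HH$, identify it with the Haar state by the standard uniqueness argument, and conclude faithfulness from co-amenability of $\HH$ (the paper cites B\'edos--Murphy--Tuset, Theorem 2.2, for this last step); the weak$^*$-density argument you supply is exactly the detail the paper outsources to the references. One cosmetic slip: having established $(\phi\tensor m')\Delta_{\HH}(x)=\phi(\one)m'(x)$ for all $\phi\in C(\HH)^{*}$, the quantity to evaluate two ways is $(h_{\HH}\tensor m')\Delta_{\HH}(x)$ --- equal to $m'(x)$ by that identity with $\phi=h_{\HH}$, and to $h_{\HH}(x)$ by right-invariance of $h_{\HH}$ --- whereas as written your first evaluation of $(m'\tensor h_{\HH})\Delta_{\HH}(x)$ applies the invariance of $m'$ on the wrong leg (it would require right-invariance of $m'$, which you did not assume); this is a one-line fix.
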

\begin{proof}
Let $m$ be a left-invariant state of $\G$. Since $m|_{C(\HH)}$
is a left-invariant state of the compact quantum group $\HH$, it
must be equal to the Haar state of $\HH$ \citep{Woronowicz__symetries_quantiques,Maes_van_Daele__notes_CQGs},
which is faithful as $\HH$ is co-amenable \citep[Theorem 2.2]{Bedos_Murphy_Tuset__coam_of_CQGs}.
\end{proof}
Under mild assumptions, we can provide another characterization of
$\H^{\CAP}$, one which bears a stronger resemblance to the definition
of almost periodicity used in \citep{Niculescu_Stroh_Zsido__noncmt_recur,Zsido__splitting_noncomm_dyn_sys}
for the classical case (approach \prettyref{enu:classical_AP_3} above).
\begin{defn}
We say that a quantum semigroup $\G$ satisfies condition (I) if every
isometric or co-isometric finite-dimensional co-representation of
$\G$ is unitary.
\end{defn}
Condition (I) is satisfied when $\G$ is a LCQG, in which case every
isometric or co-isometric co-representation of $\G$ is unitary by
\citep[Corollaries 4.11, 4.12]{Brannan_Daws_Samei__cb_rep_of_conv_alg_of_LCQGs}.
It is also satisfied when $\Linfty{\G}$ is a finite von Neumann algebra.
\begin{cor}
\label{cor:CAP_second_def}Let $\H^{\mathrm{CP2}}$ be the set of
all vectors $\z\in\H$ satisfying the following conditions:
\begin{enumerate}
\item \label{enu:CAP2_1}there exists a finite-dimensional sub-representation
of $U$ on a space that contains $\z$;%
\footnote{this is evidently equivalent to $T_{\z}:\Lone{\G}\to\H$ having finite
rank%
} and
\item \label{enu:CAP2_2}for every $\eta\in\H$ with $x:=(\om_{\z,\eta}\tensor\i)(U)\neq0$,
we have $m(x^{*}x)>0$ for some left-invariant mean $m$ on $\G$.
\end{enumerate}
We have $\H^{\mathrm{CP}}\subseteq\H^{\mathrm{CP2}}$. If $\G$ satisfies
condition (I), then $\H^{\mathrm{CP}}=\H^{\mathrm{CP2}}$, so that
$\clinspan\H^{\mathrm{CP2}}=\H^{\CAP}$.
\end{cor}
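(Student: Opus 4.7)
The plan is to prove the two inclusions separately.

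For $\H^{\mathrm{CP}}\subseteq\H^{\mathrm{CP2}}$: let $\z\in\H^{\mathrm{CP}}\subseteq\H^{\CAP}$. Condition (i) is immediate from the definition. For (ii), \prettyref{thm:compactification_CAP} ensures $(\om_{\z,\eta}\tensor\i)(U)\in C(\HH)$ for every $\eta\in\H$; any left-invariant mean on $\G$ restricts to a left-invariant state on the compact quantum group $\HH$, hence by uniqueness coincides with its Haar state, which is faithful on $C(\HH)$ by \prettyref{cor:m_faithful_on_H}. Thus $m(x^{*}x)>0$ whenever $x\neq 0$.

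For the reverse direction under (I): take $\z\in\H^{\mathrm{CP2}}$. By (i) and the standing hypothesis $(\i\tensor\epsilon_{\l})(U)\z\to\z$ weakly, the cyclic $U$-invariant subspace $\K_{0}$ generated by $\z$ is finite-dimensional and contains $\z$, and $u_{0}:=U|_{\K_{0}\tensor L^{2}(\G)}$ is isometric, hence unitary by (I). The plan is to show $\z\in\H^{\CAP}$; the $U$-invariance of $\H^{\CAP}$ from \prettyref{thm:compactification_CAP} then gives $\K_{0}\subseteq\H^{\CAP}$, so $u_{0}$ is a finite-dimensional sub-representation of the unitary co-representation of the compact quantum group $\HH$ on $\H^{\CAP}$. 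Since every finite-dimensional co-representation of a compact quantum group has invertible transpose (via the antipode on the Hopf $*$-algebra of matrix coefficients), we conclude $\z\in\H^{\mathrm{CP}}$.

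To prove $\z\in\H^{\CAP}$, decompose $\z=\z_{1}+\z_{2}$ along $\H^{\CAP}\oplus(\H^{\CAP})^{\perp}$. Both summands are $U$-invariant: $\H^{\CAP}$ by \prettyref{thm:compactification_CAP}, and $(\H^{\CAP})^{\perp}$ because $U|_{\H^{\CAP}\tensor L^{2}(\G)}$ is a bijection (being unitary), so the standard argument for isometries with a surjectively invariant subspace transfers invariance to the orthogonal complement. Suppose $\z_{2}\neq 0$; take $\eta=\z_{2}$ in (ii). Since $U\z_{1}\in\H^{\CAP}\tensor L^{2}(\G)$ is orthogonal to $\z_{2}\in(\H^{\CAP})^{\perp}$, we have $x=(\om_{\z,\z_{2}}\tensor\i)(U)=(\om_{\z_{2},\z_{2}}\tensor\i)(U)$, and the standing hypothesis gives $\epsilon_{\l}(x)=\om_{\z_{2},\z_{2}}((\i\tensor\epsilon_{\l})(U))\to\|\z_{2}\|^{2}>0$, so $x\neq 0$.

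The main obstacle is to show that $m(x^{*}x)=0$ for every left-invariant mean $m$, which would contradict (ii). The plan is to restrict attention to the finite-dimensional cyclic sub-representation $\K'\subseteq(\H^{\CAP})^{\perp}$ generated by $\z_{2}$, on which $u':=U|_{\K'\tensor L^{2}(\G)}$ is unitary by (I) with no nonzero fixed vectors (any such would lie in $\H^{\CAP}\cap\K'=\{0\}$). Expanding $\Delta(x)=\sum_{i}(\om_{\z_{2},e_{i}}\tensor\i)(u')\tensor(\om_{e_{i},\z_{2}}\tensor\i)(u')$ over an orthonormal basis $(e_{i})$ of $\K'$ and iterating left- and right-invariance of $m$ expresses $m(x^{*}x)$ in terms of quantities $m((u')_{ij}^{*}(u')_{kl})$; one then aims to show these vanish by applying \prettyref{thm:genr_mean_ergodic} to a suitable tensor-product co-representation built from $u'$, with the absence of fixed vectors together with a Schur-orthogonality argument in the quantum Jacobs--de~Leeuw--Glicksberg setting delivering the vanishing.
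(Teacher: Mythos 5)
Your first inclusion $\H^{\mathrm{CP}}\subseteq\H^{\mathrm{CP2}}$ is correct and coincides with the paper's argument. The reverse inclusion is where the problem lies. You reduce everything to the claim that if $0\neq\z_{2}\in(\H^{\CAP})^{\perp}$ has finite-dimensional orbit, then $m(x^{*}x)=0$ for \emph{every} left-invariant mean $m$, where $x=(\om_{\z_{2},\z_{2}}\tensor\i)(U)$; but this is exactly the hard point, and your text explicitly defers it (``one then aims to show''). The sketched route does not go through with the tools available at this stage. Writing $v:=U|_{\K'\tensor L^{2}(\G)}=(v_{ij})$, left-invariance of $m$ only says that the matrix $G:=\bigl(m(v_{ij}^{*}v_{kl})\bigr)$ is a fixed point of a tensor-product co-representation built from the contragredient $v^{*\mathrm{t}}$ and $v$. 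To apply \prettyref{thm:genr_mean_ergodic} to that co-representation you need it to be unitary, hence you need $v^{*\mathrm{t}}$ to be invertible --- but invertibility of the transpose is precisely what the corollary is trying to establish. Likewise, concluding $G=0$ from $\K'\perp\H^{\CAP}$ requires knowing that the fixed vectors of the averaged operators lie over $\H^{\CAP}$, which is \prettyref{lem:CAP_irred_H_P_E}; its proof again uses invertibility of the transpose and is only given for irreducibles of $\HH$. Schur orthogonality by itself cannot deliver the vanishing, since for a unitary co-representation of a genuine compact quantum group one has $h(u_{ij}^{*}u_{kl})\neq0$ for suitable indices. Finally, what your contradiction really needs is the full Jacobs--de~Leeuw--Glicksberg dichotomy (all invariant means annihilate all coefficients over $(\H^{\CAP})^{\perp}$), a statement the authors explicitly postpone to a later paper; condition \prettyref{enu:CAP2_2} only concerns the specific vector $\z$ and cannot be bootstrapped into it here.

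The paper's proof avoids all of this by a short linear-algebra argument on the cyclic subspace $\H_{1}=\overline{\Img T_{\z}}$ itself: with $u:=U|_{\H_{1}\tensor L^{2}(\G)}$ unitary by condition (I) and $g:=\bigl(m(a_{k}^{*}a_{l})\bigr)_{k,l}$ the Gram matrix of the coefficients $a_{k}$ of $\tilde{\a}(\z)$, left-invariance yields $u^{\mathrm{t}*}(g\tensor\one)u^{\mathrm{t}}=g\tensor\one$; condition \prettyref{enu:CAP2_2} forces $g$ to be invertible, because $c^{*}gc=m(x^{*}x)$ for the coefficient $x$ determined by a vector $c$ in its kernel; and then $(g^{-1/2}\tensor\one)u^{\mathrm{t}*}(g^{1/2}\tensor\one)$ is a finite-dimensional co-isometric co-representation, hence invertible by condition (I), so $u^{\mathrm{t}}$ is invertible and $\z\in\H^{\mathrm{CP}}$ directly, with no appeal to $\H^{\CAP}$ or to the mean ergodic theorem. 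You should replace the second half of your proposal with an argument of this kind; as written it is a programme rather than a proof.
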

Condition \prettyref{enu:CAP2_2} originates in a concrete interpretation
of periodicity. In the setting of Niculescu, Str\"{o}h and Zsid\'{o}
\citep{Niculescu_Stroh_Zsido__noncmt_recur}, one has the following:
a vector $\z\in\H$ is almost periodic if and only if for every $\e>0$,
the set $\left\{ n\in\Z_{+}:\left\Vert U^{n}\z-\z\right\Vert <\e\right\} $
is relatively dense in $\Z_{+}$ \citep[Corollary 9.10]{Niculescu_Stroh_Zsido__noncmt_recur}.
This obviously implies that if $\z\in\H$ is almost periodic and $\eta\in\H$
is such that the function $x:\Z_{+}\to\C$, $n\mapsto\left\langle U^{n}\z,\eta\right\rangle $,
is not identically zero, then $m(\left|x\right|^{2})>0$ for \emph{every}
invariant mean $m$ on $\Z_{+}$. A similar assertion can be stated
in the more general setting of Zsid\'{o} \citep{Zsido__splitting_noncomm_dyn_sys}
as well. Thus, \prettyref{enu:CAP2_2} can be viewed as a weak type
of recurrence (which is automatic in the classical setting).
\begin{proof}[Proof of \prettyref{cor:CAP_second_def}]
If $\z\in\H^{\mathrm{CP}}$ and $\eta\in\H$ are such that $x:=(\om_{\z,\eta}\tensor\i)(U)\neq0$,
then $m(x^{*}x)>0$ for\emph{ every }left- (or right-) invariant mean
$m$ on $\G$ by \prettyref{cor:m_faithful_on_H} as $x\in C(\HH)$.
In conclusion, $\H^{\mathrm{CP}}\subseteq\H^{\mathrm{CP2}}$.

Let $\z\in\H^{\mathrm{CP2}}$ with $\norm{\z}=1$. Write $\tilde{\a}(\z)=\z_{1}\tensor a_{1}+\ldots+\z_{n}\tensor a_{n}$,
where $a_{1},\ldots,a_{n}$ are linearly independent and $\z_{1},\ldots,\z_{n}$
are orthonormal. Let $\H_{1}:=\linspan\left\{ \z_{1},\ldots,\z_{n}\right\} $.
By co-amenability, $\z\in\H_{1}$ (see \prettyref{lem:alpha_tilde}),
so we may assume that $\z=\z_{1}$.

For each $1\leq i\leq n$, fix $\om_{i}\in\Lone{\G}$ with $\om_{i}(a_{j})=\delta_{ij}$
for all $1\leq j\leq n$. Now $\tilde{\a}(\z_{i})=\tilde{\a}((\i\tensor\om_{i})\tilde{\a}(\z))=(\i\tensor\i\tensor\om_{i})(\tilde{\a}\tensor\i)\tilde{\a}(\z)=(\i\tensor(\i\tensor\om_{i})\Delta)\tilde{\a}(\z)$.
Hence $\H_{1}$ is invariant under $U$. Let $u\in B(\H_{1})\odot\Linfty{\G}$
be the co-representation of $\G$ on $\H_{1}$ given by $u:=U|_{\H_{1}\tensor L^{2}(\G)}$.
Since $u$ is an isometry and $\G$ satisfies condition (I), $u$
is unitary. Write $u$ as a matrix $\left(u_{ij}\right)_{i,j=1}^{n}\in M_{n}(\Linfty{\G})$
by setting $u_{ij}:=(\z_{i}^{*}\tensor\i)\tilde{\a}(\z_{j})$. In
particular, $a_{i}=u_{i1}$ for every $i$.

We need only show that $u^{\mathrm{t}}$ is invertible to establish
that $\z\in\H^{\mathrm{CP}}$. Let $m$ be a left-invariant mean on
$\G$ as in the corollary's statement. Since $u$ is a co-representation
of $\G$,
\[
m(a_{i}^{*}a_{j})\one=(\i\tensor m)\Delta(a_{i}^{*}a_{j})=\sum_{k,l=1}^{n}u_{ik}^{*}u_{jl}m(a_{k}^{*}a_{l})\qquad(\forall1\leq i,j\leq n).
\]
That is, letting $g:=\left(m(a_{k}^{*}a_{l})\right)_{k,l}$, we get
$u^{\mathrm{t}*}(g\tensor\one)u^{\mathrm{t}}=g\tensor\one$. The positive
semi-definite matrix $g$ is invertible, for otherwise there is $0\neq c=\left(c_{i}\right)\in\C^{n}$
such that $gc=0$. Thus, writing $x:=\sum_{k=1}^{n}c_{k}a_{k}=((\sum_{k=1}^{n}\overline{c_{k}}\z_{k})^{*}\tensor\i)\tilde{\a}(\z)$,
we have $x\neq0$ (because $a_{1},\ldots,a_{n}$ are linearly independent)
but $m(x^{*}x)=0$, a contradiction. Consequently, $(g^{-1/2}\tensor\one)u^{\mathrm{t}*}(g^{1/2}\tensor\one)$
is a co-isometric (finite-dimensional) co-representation of $\G$.
Hence, by condition (I), it is invertible. Therefore, so is $u^{\mathrm{t}}$.
This completes the proof.
\end{proof}

\section{\label{sec:analysis_of_CAP}Analysis of $\H^{\CAP}$ and $N^{\CAP}$}

In this section we settle affirmatively Questions \ref{enu:question_1}--\ref{enu:question_3}
(the last one under additional conditions). The two main tools are
the compactification result of \prettyref{sec:CAP} and modular theory.
Some of the results and techniques should be compared to Boca \citep{Boca__ergodic_actions}.

Fix a complete family $\left(u^{\gamma}\right)_{\gamma\in\mathrm{Irred}(\HH)}$
of irreducible unitary co-representations of $\HH$ (see \citep{Woronowicz__symetries_quantiques}
for details). For each $\gamma\in\mathrm{Irred}(\HH)$, $u^{\gamma}=(u_{ij}^{\gamma})_{i,j=1}^{n(\gamma)}$
belongs to $M_{n(\gamma)}\odot C(\HH)$. As $\HH$ is a compactification
of $\G$, every such $u^{\gamma}$ is also a co-representation of
$\G$. Consider the construction in \prettyref{thm:genr_mean_ergodic}
with $C$ being $M_{n(\gamma)}$ and $\mu$ being the normalized trace
$\tr_{\gamma}$ on $M_{n(\gamma)}$. Denoting the GNS construction
of $(M_{n(\gamma)},\tr_{\gamma})$ by $(\H_{\gamma},\Gamma_{\gamma})$,
we get the associated projection $P_{u^{\gamma}}\in B(\H_{\gamma}\tensor\H)$
and normal projection $E_{u^{\gamma}}:M_{n(\gamma)}\odot N\to N_{u^{\gamma}}=\left\{ b\in M_{n(\gamma)}\odot N:(\i\tensor\a)(b)=u_{13}^{\gamma*}b_{12}\right\} $.
Write $(e_{ij}^{\gamma})$ for a system of matrix units for $M_{n(\gamma)}$.
\begin{lem}
\label{lem:CAP_irred_H_P_E}We have $\Img P_{u^{\gamma}}\subseteq\H_{\mu}\tensor\H^{\CAP}$
and $N_{u^{\gamma}}=\Img E_{u^{\gamma}}\subseteq M_{n(\gamma)}\odot N^{\CAP}$
for all $\gamma\in\mathrm{Irred}(\HH)$.\end{lem}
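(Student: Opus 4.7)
The plan is to establish $N_{u^{\gamma}} \subseteq M_{n(\gamma)} \odot N^{\CAP}$, from which both assertions follow at once: the second is tautological, while for the first, \prettyref{thm:genr_mean_ergodic}\prettyref{enu:genr_mean_ergodic__2} (applicable as $\tr_{\gamma}$ is faithful) gives $\Img P_{u^{\gamma}} = \overline{\Xi(N_{u^{\gamma}})}$ with $\Xi = \Gamma_{\gamma} \tensor \Gamma$, and $\H_{\gamma} \tensor \H^{\CAP}$ is closed in $\H_{\gamma} \tensor \H$ since $\H^{\CAP}$ is closed. Fixing $b \in N_{u^{\gamma}}$ and writing $b = \sum_{i,j=1}^{n(\gamma)} e_{ij}^{\gamma} \tensor b_{ij}$, matching matrix coefficients in $(\i \tensor \a)(b) = u_{13}^{\gamma *} b_{12}$ yields $\a(b_{ij}) = \sum_{k=1}^{n(\gamma)} b_{kj} \tensor u^{\gamma *}_{ki}$; applying $\Gamma \tensor \i$ and invoking \prettyref{lem:alpha_tilde} then gives $\tilde{\a}(\Gamma(b_{ij})) = \sum_{k} \Gamma(b_{kj}) \tensor u^{\gamma *}_{ki}$.

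Fix $j$ and define $\phi_{j} : \C^{n(\gamma)} \to \H$ by $\phi_{j}(e_{i}) := \Gamma(b_{ij})$. The identity above shows that $\phi_{j}$ intertwines the co-representation $v := (u^{\gamma *}_{ki})_{k,i}$ of $\G$ on $\C^{n(\gamma)}$ with $U$. All entries of $v$ lie in $C(\HH)$, so $v$ is a finite-dimensional co-representation of the CQG $\HH$; in fact $v$ is nothing but the conjugate $\overline{u^{\gamma}}$. Since $u^{\gamma}$ is irreducible, so is $\overline{u^{\gamma}}$ (the invariant subspaces of the two are orthogonal complements of each other), so $\ker \phi_{j}$ is either $\C^{n(\gamma)}$ (in which case $b_{ij} = 0$ for every $i$, trivially) or $\{0\}$. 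In the latter case $W_{j} := \phi_{j}(\C^{n(\gamma)})$ is an $n(\gamma)$-dimensional $U$-invariant subspace of $\H$ containing each $\Gamma(b_{ij})$, and the sub-representation $v^{(j)} := U|_{W_{j} \tensor L^{2}(\G)}$ is equivalent via $\phi_{j}$ to $\overline{u^{\gamma}}$, with matrix $(u^{\gamma *}_{ki})_{k,i}$ in the basis $\{\Gamma(b_{ij})\}_{i}$. To place each $\Gamma(b_{ij})$ in $\H^{\mathrm{CP}} \subseteq \H^{\CAP}$ via \prettyref{def:CAP}, it remains to show that $v^{(j)}$ and its matrix transpose $(v^{(j)})^{\mathrm{t}}$ are both invertible in $M_{n(\gamma)}(\Linfty{\G})$. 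A direct index calculation identifies $(v^{(j)})^{\mathrm{t}}$ with $(u^{\gamma})^{*}$, the adjoint of a unitary, which is invertible; and $v^{(j)} = \overline{u^{\gamma}} = ((u^{\gamma})^{\mathrm{t}})^{*}$, which is invertible iff $(u^{\gamma})^{\mathrm{t}}$ is. The latter is a standard CQG fact (Woronowicz \citep{Woronowicz__symetries_quantiques}, Maes--Van Daele \citep{Maes_van_Daele__notes_CQGs}): the $F$-matrix construction makes $\overline{u}$ similar to a unitary co-representation, and hence invertible, for every irreducible unitary $u$ of a CQG.

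\textbf{Main obstacle.} The delicate ingredient is the invertibility clause in \prettyref{def:CAP}. Whereas $(v^{(j)})^{\mathrm{t}}$ is manifestly unitary, $v^{(j)} = \overline{u^{\gamma}}$ is generically non-unitary (it is unitary only in the Kac case), and its invertibility cannot be read off from $U$ alone. It essentially relies on the compact quantum group structure on the compactification $\HH$ furnished by \prettyref{thm:compactification_CAP}, together with the Woronowicz $F$-matrix theory of irreducible unitary co-representations of CQGs --- which is precisely what makes the compactification result indispensable here.
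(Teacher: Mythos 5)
Your proof is correct and takes essentially the same route as the paper's: extract $\a(b_{ij})=\sum_{k}b_{kj}\tensor u^{\gamma*}_{ki}$ from the defining relation of $N_{u^{\gamma}}$, use irreducibility to see that for fixed $j$ the vectors $\Gamma(b_{ij})$ span a sub-representation of $U$ whose matrix is the conjugate $u^{\gamma*\mathrm{t}}$, check the two invertibility requirements of \prettyref{def:CAP} via unitarity of $u^{\gamma}$ and the standard invertibility of $(u^{\gamma})^{\mathrm{t}}$, and then deduce the first inclusion from \prettyref{thm:genr_mean_ergodic}, \prettyref{enu:genr_mean_ergodic__2}. One cosmetic remark: the invariant subspaces of $u$ and of $u^{*\mathrm{t}}$ correspond under entrywise complex conjugation rather than orthogonal complementation, but the irreducibility of $\overline{u^{\gamma}}$ that you need is standard in any case.
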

\begin{proof}
Write $u,n$ for $u^{\gamma},n(\gamma)$, respectively. Let $b\in\Img E_{u}$.
For all $1\leq i,j\leq n$, we have $\a(b_{ij})=\sum_{k=1}^{n}b_{kj}\tensor(u^{\gamma*})_{ik}$
, hence $\tilde{\a}(\Gamma(b_{ij}))=\sum_{k=1}^{n}\Gamma(b_{kj})\tensor(u^{\gamma*})_{ik}$.
Fix $j$. The operators $b_{ij}$, $1\leq i\leq n$, are linearly
independent unless all zero, as $u$ is irreducible. Since $u^{\mathrm{t}}$
is invertible \citep{Woronowicz__symetries_quantiques,Maes_van_Daele__notes_CQGs},
the sub-representation of $U$ on $\linspan\left\{ \Gamma(b_{ij})\right\} _{i=1}^{n}$
satisfies the condition of \prettyref{def:CAP}, so that $\Gamma(b_{ij})\in\H^{\mathrm{CP}}$,
thus $b_{ij}\in N^{\CAP}$, for every $i$. This proves the second
assertion. The first one follows by \prettyref{thm:genr_mean_ergodic},
\prettyref{enu:genr_mean_ergodic__2}.\end{proof}
\begin{lem}
\label{lem:alpha_of_N_cap}For every $a\in N^{\CAP}$ and $\rho\in N_{*}$
we have $(\rho\tensor\i)\a(a)\in C(\HH)$.\end{lem}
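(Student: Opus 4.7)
The plan is to combine the compactification result \prettyref{thm:compactification_CAP} with the intertwining identity $\tilde{\a}\circ\Gamma=(\Gamma\tensor\i)\circ\a$ of \prettyref{lem:alpha_tilde}, and then to extend from a convenient dense family of normal functionals to an arbitrary $\rho\in N_{*}$.

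The first step is to observe that for every $a\in N^{\CAP}$ the vector $\tilde{\a}(\Gamma(a))=(\Gamma\tensor\i)\a(a)$ actually lies in $\H_{c}\tensormin C(\HH)$. Indeed, the proof of \prettyref{thm:compactification_CAP} shows $\tilde{\a}(\H^{\mathrm{CP}})\subseteq\H^{\mathrm{CP}}\odot C(\HH)$; since $\tilde{\a}$ is contractive by \prettyref{lem:alpha_tilde}, since $C(\HH)\subseteq\Linfty{\G}$ is a norm-closed $C^{*}$-subalgebra, and since the min norm agrees with the norm inherited from $\H_{c}\tensorn\Linfty{\G}$ on the algebraic tensor product, this passes by continuity from $\linspan\H^{\mathrm{CP}}$ to its norm closure $\H^{\CAP}$.

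Next, for any $\xi\in\H$, the slice $\xi^{*}\tensor\i$ restricts to a bounded map $\H_{c}\tensormin C(\HH)\to C(\HH)$. Applied to $\tilde{\a}(\Gamma(a))=(\Gamma\tensor\i)\a(a)$, and using that $\Gamma(a)=a\Gamma(\one)$ in the GNS representation, this yields
\[
(\om_{\Gamma(\one),\xi}|_{N}\tensor\i)\a(a)=((\xi^{*}\circ\Gamma)\tensor\i)\a(a)=(\xi^{*}\tensor\i)\tilde{\a}(\Gamma(a))\in C(\HH).
\]
So the conclusion of the lemma already holds for every $\rho$ of the form $\om_{\Gamma(\one),\xi}|_{N}$, $\xi\in\H$.

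Finally, the subspace $\left\{ \om_{\Gamma(\one),\xi}|_{N}:\xi\in\H\right\} $ is norm-dense in $N_{*}$: it is a complex-linear subspace (scalar multiplication by $\lambda$ comes from replacing $\xi$ by $\overline{\lambda}\xi$), and if $b\in N$ annihilates every such functional then $\langle b\Gamma(\one),\xi\rangle=0$ for all $\xi$, so $b\Gamma(\one)=0$, forcing $b=0$ since $\Gamma(\one)$ is separating for $N$. This gives $\sigma(N_{*},N)$-density, which is the weak topology on $N_{*}$ thanks to $(N_{*})^{*}=N$, and Hahn--Banach then upgrades this to norm density. Since $\rho\mapsto(\rho\tensor\i)\a(a)$ is norm-continuous from $N_{*}$ into $\Linfty{\G}$ and $C(\HH)$ is norm-closed in $\Linfty{\G}$, the conclusion extends to every $\rho\in N_{*}$. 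The main subtlety is really the bookkeeping of the operator-space tensor-product norms in the first step; once that is in place the rest is routine.
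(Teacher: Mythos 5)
Your proof is correct and follows essentially the same route as the paper's: establish the claim for the vector functionals $\langle\Gamma(\cdot),\eta\rangle$, $\eta\in\H$, and then upgrade to all of $N_{*}$ by the Hahn--Banach norm-density argument. The only (harmless) difference is that you derive the vector-functional case from the tensor-product membership $\tilde{\a}(\H^{\CAP})\subseteq\H_{c}\tensormin C(\HH)$, whereas the paper reads it off directly from the definition of $C(\HH)$ as the $C^{*}$-algebra generated by the slices $(\om_{\z,\eta}\tensor\i)(U)$ with $\z\in\H^{\CAP}$.
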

\begin{proof}
If $\rho$ is of the form $\rho(x)=\left\langle \Gamma(x),\eta\right\rangle $
for some $\eta\in\H$, then $(\rho\tensor\i)\a(a)=(\eta^{*}\tensor\i)\tilde{\a}(\Gamma(a))\in C(\HH)$
by construction. The Hahn--Banach theorem implies that the subspace
of functionals of this form is norm dense in $N_{*}$, and the assertion
follows.
\end{proof}
Let now $A_{u^{\gamma}}$ ($\subseteq N^{\CAP}$) denote the ``right
leg of $N_{u^{\gamma}}=\Img E_{u^{\gamma}}$'', namely the span of
all matrix elements of matrices in $\Img E_{u^{\gamma}}$. Similarly,
let $B_{u^{\gamma}}$ ($\subseteq\H^{\CAP}$) denote the ``right
leg of $\Img P_{u^{\gamma}}$''.
\begin{prop}
\label{prop:CAP_densities}The following assertions hold.
\begin{enumerate}
\item \label{enu:CAP_densities_1}The set $\bigcup_{\gamma\in\mathrm{Irred}(\HH)}B_{u^{\gamma}}$
is total in $\H^{\CAP}$.
\item \label{enu:CAP_densities_2}The set $\Gamma\left(\bigcup_{\gamma\in\mathrm{Irred}(\HH)}A_{u^{\gamma}}\right)$
is total in $\H^{\CAP}$.
\item \label{enu:CAP_densities_3}$\overline{\Gamma(N^{\CAP})}=\H^{\CAP}$
(Question \ref{enu:question_2}).
\item \label{enu:CAP_densities_4}The set $\bigcup_{\gamma\in\mathrm{Irred}(\HH)}A_{u^{\gamma}}$
is weakly total in $N^{\CAP}$.
\end{enumerate}
\end{prop}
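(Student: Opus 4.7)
\emph{Preparatory identification.} My first step is to show that, for every $\gamma\in\mathrm{Irred}(\HH)$,
\[
\overline{B_{u^{\gamma}}}=\overline{\Gamma(A_{u^{\gamma}})}
\]
in $\H$. By \prettyref{thm:genr_mean_ergodic}(b), $\Img P_{u^{\gamma}}=\overline{\Xi(N_{u^{\gamma}})}$. For $b=(b_{ij})\in N_{u^{\gamma}}$ and $\rho\in\H_{\gamma}$,
\[
(\rho^{*}\tensor\i)\Xi(b)=\sum_{i,j}\langle\Gamma_{\gamma}(e_{ij}^{\gamma}),\rho\rangle\,\Gamma(b_{ij})\in\Gamma(A_{u^{\gamma}}),
\]
and all of $\Gamma(A_{u^{\gamma}})$ is obtained in this way as $b$ and $\rho$ vary. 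Since partial evaluation is continuous, the displayed identity follows. Consequently (1) is equivalent to (2), and (3) follows from (2) via $A_{u^{\gamma}}\subseteq N^{\CAP}$.

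\emph{Proof of (1).} By \prettyref{thm:compactification_CAP}, $U|_{\H^{\CAP}\tensor L^{2}(\G)}$ is a unitary co-representation of the compact quantum group $\HH$. Let $\z\in\H^{\mathrm{CP}}$ and let $\K_{0}\ni\z$ be the finite-dimensional $U$-invariant subspace provided by \prettyref{def:CAP}. By Peter--Weyl theory for $\HH$, $\K_{0}=\bigoplus_{k}\K_{k}$ with each $U|_{\K_{k}\tensor L^{2}(\G)}$ unitarily equivalent to some irreducible $u^{\delta_{k}}$, and $\z=\sum_{k}\z_{k}$ with $\z_{k}\in\K_{k}$. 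Using the contragredient bijection of $\mathrm{Irred}(\HH)$, pick $\gamma_{k}$ with $\gamma_{k}^{c}=\delta_{k}$; the entrywise conjugate co-representation $(u_{il}^{\gamma_{k}*})_{i,l}$ is then equivalent (not necessarily unitarily) to $U|_{\K_{k}\tensor L^{2}(\G)}$, giving a basis $\{\z_{l}^{(k)}\}_{l=1}^{n(\gamma_{k})}$ of $\K_{k}$ with
\[
\tilde{\a}(\z_{l}^{(k)})=\sum_{i}\z_{i}^{(k)}\tensor u_{il}^{\gamma_{k}*}.
\]
Form
\[
\eta_{k}:=\sum_{i=1}^{n(\gamma_{k})}\Gamma_{\gamma_{k}}(e_{i1}^{\gamma_{k}})\tensor\z_{i}^{(k)}\in\H_{\gamma_{k}}\tensor\H.
\]
A direct computation using the unitarity identity $\sum_{j}u_{pj}^{\gamma_{k}}u_{ij}^{\gamma_{k}*}=\delta_{pi}\one$ shows that $V_{u^{\gamma_{k}}}(\theta)\eta_{k}=\eta_{k}$ for every state $\theta\in\Lone{\G}$, so $\eta_{k}\in\Img P_{u^{\gamma_{k}}}$. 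The right-leg evaluation $n(\gamma_{k})\cdot(\Gamma_{\gamma_{k}}(e_{l1}^{\gamma_{k}})^{*}\tensor\i)\eta_{k}=\z_{l}^{(k)}$ then exhibits each $\z_{l}^{(k)}\in B_{u^{\gamma_{k}}}$; summing over $k$ yields $\z\in\clinspan\bigcup_{\gamma}B_{u^{\gamma}}$, proving (1). Parts (2) and (3) follow via the preparatory identification.

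\emph{Part (4) -- the main obstacle.} Part (4) does not reduce immediately to (3): norm convergence of $\Gamma(a_{n})$ in $\H^{\CAP}$ does not force $\sigma$-weak convergence of $a_{n}$ in $N$ unless the approximants are uniformly bounded. My plan is to lift the Peter--Weyl decomposition of $\H^{\CAP}$ from the Hilbert-space level to the algebra level. Combining \prettyref{lem:alpha_of_N_cap} with \prettyref{thm:compactification_CAP}, $\a$ restricts to a co-action of $\HH$ on $N^{\CAP}$. Using the faithful Haar state $h$ of $\HH$ (\prettyref{cor:m_faithful_on_H}), I would construct for each $\gamma$ a normal contractive projection $F_{\gamma}\colon N^{\CAP}\to A_{u^{\gamma}}$ via a Peter--Weyl-type averaging formula $a\mapsto n(\gamma)(\i\tensor h)(\a(a)(\one\tensor c_{\gamma}))$, where $c_{\gamma}$ is assembled from matrix entries of $u^{\gamma}$. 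The finite partial sums $\sum_{\gamma\in F}F_{\gamma}(a)$, as $F\subseteq\mathrm{Irred}(\HH)$ ranges over finite sets, then form a net in $\bigcup_{\gamma}A_{u^{\gamma}}$ uniformly bounded in terms of $\|a\|$, whose $\Gamma$-images converge in norm to $\Gamma(a)$ by orthogonality of the isotypical components and (3); combined with the uniform bound, this delivers the required $\sigma$-weak convergence. The technical heart of the step is verifying that each $F_{\gamma}$ is well-defined, normal, contractive, and has range exactly $A_{u^{\gamma}}$, which rests on identifying $A_{u^{\gamma}}$ as the $\gamma$-isotypical component of $N^{\CAP}$ under the $\HH$-co-action (implicit in the proof of (1)).
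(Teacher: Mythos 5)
Your treatment of parts (a)--(c) is essentially correct, and for (a) you take a genuinely different route from the paper: you decompose the finite-dimensional invariant subspace attached to a completely periodic vector into irreducibles via Peter--Weyl for $\HH$, and exhibit each component explicitly as a right-leg vector of some $\Img P_{u^{\gamma_{k}}}$; your fixed-vector computation for $\eta_{k}$ (using $\sum_{j}u_{pj}^{\gamma_{k}}u_{ij}^{\gamma_{k}*}=\delta_{pi}\one$) and the normalization $n(\gamma_{k})(\Gamma_{\gamma_{k}}(e_{l1}^{\gamma_{k}})^{*}\tensor\i)\eta_{k}=\z_{l}^{(k)}$ both check out, and your preparatory identification $\overline{B_{u^{\gamma}}}=\overline{\Gamma(A_{u^{\gamma}})}$ is exactly how the paper deduces (b) and (c). The paper proves (a) instead by duality: if $\eta$ is orthogonal to every $B_{u^{\gamma}}$, then $x:=(\eta^{*}\tensor\i)\tilde{\a}(\z)\in C(\HH)$ satisfies $m(u_{ij}^{\gamma}x)=0$ for all $\gamma,i,j$, hence $m(x^{*}x)=0$, hence $x=0$ by \prettyref{cor:m_faithful_on_H}, and then $\left\langle \z,\eta\right\rangle =\lim_{\l}\epsilon_{\l}(x)=0$ by co-amenability. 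Your argument is more constructive; the duality argument is shorter and, crucially, recycles verbatim for part (d).

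Part (d) is where your proposal has a genuine gap. The plan hinges on the claim that the finite partial sums $\sum_{\gamma\in F}F_{\gamma}(a)$ are uniformly bounded in terms of $\norm a$. This is false already in the simplest instance of the standing hypothesis: take $\G=\HH=\mathbb{T}$ with $N=\Linfty{\mathbb{T}}$, $\a=\Delta$ and $\om$ the Haar state; then $F_{n}(f)=\hat{f}(n)e^{in\theta}$ and the partial-sum operators $\sum_{\left|n\right|\leq N}F_{n}$ have norm of order $\log N$ on $\Linfty{\mathbb{T}}$ (the Lebesgue constants). So the norm convergence of the $\Gamma$-images cannot be upgraded to $\sigma$-weak convergence along your net, and the argument collapses precisely at the point you yourself identified as the obstacle. (A Fej\'{e}r-type summation could in principle repair this, but constructing such kernels for a general co-amenable $\HH$ is itself a nontrivial project, and the claimed complete contractivity of each $F_{\gamma}$ is also delicate outside the Kac case.) The paper sidesteps all of this by running the duality argument of (a) on the predual: if $\rho\in N_{*}$ vanishes on every $A_{u^{\gamma}}$, then for each $a\in N^{\CAP}$ the element $x:=(\rho\tensor\i)\a(a)$ lies in $C(\HH)$ by \prettyref{lem:alpha_of_N_cap} and satisfies $m(u_{ij}^{\gamma}x)=0$ for all $\gamma,i,j$, hence $x=0$ by \prettyref{cor:m_faithful_on_H}, and then $\rho(a)=\lim_{\l}\epsilon_{\l}(x)=0$ by co-amenability; the Hahn--Banach theorem for the topology $\sigma(N,N_{*})$ then gives weak totality. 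You should replace your sketch of (d) with this argument.
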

\begin{proof}
Let $\eta\in\H$ be orthogonal to $\bigcup_{\gamma\in\mathrm{Irred}(\HH)}B_{u^{\gamma}}$.
Let $m$ be an invariant mean on $\G$, and fix $\left(m_{\kappa}\right)$
as in \prettyref{rem:amen_norm}. For all $\z\in\H^{\CAP}$, $\gamma\in\mathrm{Irred}(\HH)$
and $1\leq i,j\leq n(\gamma)$ we have, by \prettyref{thm:genr_mean_ergodic},
\[
0=\eta^{*}\left((\Gamma_{\gamma}(e_{ij}^{\gamma})^{*}\tensor\i)P_{u^{\gamma}}(\Gamma_{\gamma}(\one)\tensor\z)\right)=\lim_{\l}m_{\l}(u_{ij}^{\gamma}\cdot(\eta^{*}\tensor\i)\tilde{\alpha}(\z))=m(u_{ij}^{\gamma}\cdot(\eta^{*}\tensor\i)\tilde{\alpha}(\z)).
\]
Hence $x:=(\eta^{*}\tensor\i)\tilde{\alpha}(\z)$ belongs to $C(\HH)$
and satisfies $m(u_{ij}^{\gamma}x)=0$. Since the span of all $u_{ij}^{\gamma}$
for $\gamma\in\mathrm{Irred}(\HH)$, $1\leq i,j\leq n(\gamma)$, is
dense in $C(\HH)$, we have $m(x^{*}x)=0$ which, by \prettyref{cor:m_faithful_on_H},
implies that $x=0$. By co-amenability of $\G$, we get
\[
0=\lim_{\l}\epsilon_{\l}\left[(\eta^{*}\tensor\i)\tilde{\alpha}(\z)\right]=\lim_{\l}\left\langle (\i\tensor\epsilon_{\l})\tilde{\alpha}(\z),\eta\right\rangle =\left\langle \zeta,\eta\right\rangle
\]
for all $\z\in\H^{\CAP}$ (see \prettyref{lem:alpha_tilde}). This
proves \prettyref{enu:CAP_densities_1} by \prettyref{lem:CAP_irred_H_P_E}.

From \prettyref{thm:genr_mean_ergodic}, \prettyref{enu:genr_mean_ergodic__2}
we know that $\overline{\Gamma(A_{u^{\gamma}})}=B_{u^{\gamma}}$ for
all $\gamma\in\mathrm{Irred}(\HH)$, so \prettyref{enu:CAP_densities_2}
follows. This in turn implies \prettyref{enu:CAP_densities_3} because
$\bigcup_{\gamma\in\mathrm{Irred}(\HH)}A_{u^{\gamma}}\subseteq N^{\CAP}$
(\prettyref{lem:CAP_irred_H_P_E}).

The proof of \prettyref{enu:CAP_densities_4} is very similar to that
of \prettyref{enu:CAP_densities_1}. Suppose that $\rho\in N_{*}$
vanishes on $A_{u^{\gamma}}$ for every $\gamma\in\mathrm{Irred}(\HH)$.
This means that for all $a\in N^{\CAP}$, $\gamma\in\mathrm{Irred}(\HH)$
and $1\leq i,j\leq n(\gamma)$, we have $m(u_{ij}^{\gamma}\cdot(\rho\tensor\i)\a(a))=0$.
From \prettyref{lem:alpha_of_N_cap} we obtain $x:=(\rho\tensor\i)\a(a)\in C(\HH)$.
As above, we infer that $x=0$, and thus $\rho(a)=0$ by co-amenability.
\end{proof}
Henceforth we denote the (faithful) Haar state of $C(\HH)$ by $h$.
The notations $\sigma^{h}$ and $\sigma^{\om}$ stand for the modular
automorphism groups of $h$ and $\omega$, respectively. We write
$\overline{u}$ for the contragradient to a unitary irreducible co-representation
$u$ of $\HH$, that is, the unique element of $\left(u^{\gamma}\right)_{\gamma\in\mathrm{Irred}(\HH)}$
that is equivalent to $u^{*\mathrm{t}}$. (Note that $u^{*\mathrm{t}}$
is denoted by $\overline{u}$ in \citep{Maes_van_Daele__notes_CQGs}.)
\begin{prop}
\label{prop:A_u}For every $\gamma,\delta\in\mathrm{Irred}(\HH)$
we have
\begin{enumerate}
\item \label{enu:A_u__1}$A_{u^{\gamma}}^{*}=A_{\overline{u^{\gamma}}}$
;
\item \label{enu:A_u__2}$A_{u^{\gamma}}A_{u^{\delta}}\subseteq\linspan_{\be\in\mathrm{Irred}(\HH)}A_{u^{\be}}$.
\end{enumerate}
\end{prop}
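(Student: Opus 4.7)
My plan is to reduce both parts to a common \emph{transformation principle}: for any finite-dimensional co-representation $v \in M_k(\C)\tensor C(\HH)$ of $\HH$ and any $b \in M_k(N)$ satisfying $(\i\tensor\a)(b) = v^*_{13}b_{12}$, every entry of $b$ lies in $\linspan_{\be\prec v} A_{u^\be}$, where $\beta$ ranges over the irreducible sub-representations of $v$. By complete reducibility for CQG co-representations, choose an invertible scalar matrix $V$ with $v = V w V^{-1}$ and $w = \bigoplus_s u^{\be_s}$ block-diagonal, and set $d := V^* b$. A short leg calculus using $v^*_{13} = (V^{-1})^*_1 w^*_{13} V^*_1$ yields $(\i\tensor\a)(d) = w^*_{13} d_{12}$. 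Block-diagonality of $w^*$ then forces, for each fixed column index $j$, the column $(d_{ij})_i$ to split blockwise, each block sitting (as a single nonzero column in an otherwise zero matrix of the correct size) in some $N_{u^{\be_s}}$; hence $d_{ij} \in A_{u^{\be_{s(i)}}}$. Since $b = (V^*)^{-1} d$, the entries of $b$ are linear combinations of entries of $d$, so the same containment holds for $b$.

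For (a), given $b \in N_{u^\gamma}$, set $c_{ij} := b_{ij}^*$. Taking adjoints in $\a(b_{ij}) = \sum_k b_{kj}\tensor(u^{\gamma*})_{ik}$ and using the identities $((u^{\gamma*})_{ik})^* = u^\gamma_{ki} = (u^c)^*_{ik}$, where $u^c_{ij} := (u^\gamma_{ij})^*$ is directly verified to be a co-representation of $\HH$ via $\Delta((u^\gamma_{ij})^*) = \sum_k (u^\gamma_{ik})^* \tensor (u^\gamma_{kj})^*$, one obtains $\a(c_{ij}) = \sum_k c_{kj}\tensor (u^c)^*_{ik}$, i.e.\ $c \in N_{u^c}$. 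In the paper's convention, $u^c$ coincides with $u^{\gamma*\mathrm{t}}$ and is therefore equivalent to the contragredient $u^{\overline\gamma}$, so the transformation principle gives $b_{ij}^* \in A_{u^{\overline\gamma}}$, whence $A_{u^\gamma}^* \subseteq A_{u^{\overline\gamma}}$. The reverse inclusion follows by applying the same argument with $\gamma$ replaced by $\overline\gamma$, together with the standard CQG fact that $\overline{\overline\gamma} = \gamma$ on $\mathrm{Irred}(\HH)$.

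For (b), given $b \in N_{u^\gamma}$ and $b' \in N_{u^\delta}$, form $B \in M_{n(\gamma)n(\delta)}(N)$ by $B_{(k,i),(l,j)} := b_{ij} b'_{kl}$. Multiplicativity of $\a$ gives
\[
\a(B_{(k,i),(l,j)}) = \sum_{p,q} B_{(q,p),(l,j)}\tensor (u^\gamma_{pi})^*(u^\delta_{qk})^*,
\]
and the scalar coefficient is recognized as $[(u^\delta\tensor u^\gamma)^*]_{(k,i),(q,p)}$, where $u^\delta\tensor u^\gamma$ denotes the tensor-product co-representation; hence $B \in N_{u^\delta \tensor u^\gamma}$. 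Note the \emph{reversed} order $u^\delta \tensor u^\gamma$, which is forced by the noncommutativity of the $C(\HH)$-factors when rearranging $\a(b_{ij})\a(b'_{kl})$. Decomposing $u^\delta \tensor u^\gamma \cong \bigoplus_\be m_\be u^\be$ into irreducibles of $\HH$ and invoking the transformation principle yields $b_{ij} b'_{kl} \in \linspan_\be A_{u^\be}$, as required.

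The main obstacle throughout is the careful leg-calculus bookkeeping: in particular, one must distinguish the entry-wise adjoint $u^c$ from the contragredient $u^{\overline\gamma}$ (which are only equivalent, not equal, so that an intertwiner $V$ really must be used in the transformation principle), and one must keep track of the reversed order $u^\delta\tensor u^\gamma$ in part (b). Beyond this bookkeeping, everything reduces to the transformation principle together with standard Peter--Weyl structure results for compact quantum groups from Maes--Van Daele and Woronowicz.
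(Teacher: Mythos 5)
Your proof is correct, and it diverges from the paper's in an interesting way on part (a) while essentially reproducing the paper's argument for part (b). For (b), the paper also passes to the tensor-product co-representation $u:=u_{23}^{\delta}u_{13}^{\gamma}$ (the same reversed order you identify, forced by commuting $a_{13}$ past $u_{24}^{\delta*}$) and then decomposes it into irreducibles; your ``transformation principle'' just makes explicit the change-of-basis step that the paper compresses into ``the desired conclusion follows.'' For (a), however, the paper takes a different route: it restricts to $a\in\Img E_{u}$ so that $\a(a_{ij})\in N\odot C(\HH)$, writes $E_{u}(a)^{*}=(\i\tensor\i\tensor h)\left[(\i\tensor\a)(a^{*})\cdot u_{13}^{*}\right]$ with $h$ the Haar state, and uses the KMS condition of $h$ together with the matrices $E,F$ controlling $\overline{u}$ and $\sigma_{i}^{h}$ to move $u_{13}^{*}$ back to the left, arriving at the explicit identity $E_{u}(a)^{*\mathrm{t}}=(G_{1}\tensor\one)E_{\overline{u}}\bigl((G_{2}\tensor\one)a^{*\mathrm{t}}\bigr)$. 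Your argument --- take adjoints of the entrywise intertwining relation, recognize the resulting co-representation as $u^{*\mathrm{t}}$, and use its equivalence to $\overline{u}$ --- is more elementary (no modular theory of $h$, no invariant mean) and suffices for the set equality $A_{u^{\gamma}}^{*}=A_{\overline{u^{\gamma}}}$. What the paper's heavier computation buys is precisely that quantitative formula relating $E_{u}(\cdot)^{*\mathrm{t}}$ to $E_{\overline{u}}$: it is reused verbatim (with $m$ in place of $h$ and without the restriction $a\in\Img E_{u}$) in the proof of Theorem \ref{thm:invariance_under_mod_aut_gr}, so the modular-theoretic detour is not redundant in the paper's overall architecture even though it is avoidable for the proposition itself.
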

\begin{proof}
\prettyref{enu:A_u__1} Write $u,n$ for $u^{\gamma},n(\gamma)$,
respectively, and let $a=(a_{ij})_{i,j=1}^{n}\in M_{n}\odot N$. Since
$E_{u}$ is a projection, we may assume, for the purpose of showing
that the matrix elements of $E_{u}(a)^{*}$ belong to $A_{\overline{u}}$,
that $a\in\Img E_{u}$. Therefore $\a(a_{ij})\in N\odot C(\HH)$ for
every $i,j$ (Lemmas \ref{lem:CAP_irred_H_P_E} and \ref{lem:alpha_of_N_cap}).
Let $m$ be an invariant mean on $\G$. Recalling that $h=m|_{C(\HH)}$
(see proof of \prettyref{cor:m_faithful_on_H}), we have, by \prettyref{thm:genr_mean_ergodic}
and \prettyref{rem:amen_norm},
\[
E_{u}(a)^{*}=\left\{ (\i\tensor\i\tensor h)\left[u_{13}\cdot(\i\tensor\a)(a)\right]\right\} ^{*}=(\i\tensor\i\tensor h)\left[(\i\tensor\a)(a^{*})\cdot u_{13}^{*}\right].
\]
Let $E,F\in GL_{n}$, $F$ positive definite, satisfy $\overline{u}=(E\tensor\one)u^{*\mathrm{t}}(E^{-1}\tensor\one)$
and $(\i\tensor\sigma_{i}^{h})\overline{u}=(F\tensor\one)\overline{u}(F\tensor\one)$
(see \citep{Woronowicz__symetries_quantiques}). Then $(\i\tensor\sigma_{i}^{h})(u^{*})=\left[(E^{-1}\tensor\one)(\i\tensor\sigma_{i}^{h})(\overline{u})(E\tensor\one)\right]^{\mathrm{t}}$,
and therefore
\begin{equation}
\begin{split}E_{u}(a)^{*} & =\sum_{i,j,k=1}^{n}e_{ij}\tensor(\i\tensor h)(\a(a_{ki}^{*})(\one\tensor u_{jk}^{*}))\\
 & =\sum_{i,j,k=1}^{n}e_{ij}\tensor(\i\tensor h)\left\{ \left[\one\tensor\left((E^{-1}F\tensor\one)\overline{u}(FE\tensor\one)\right)_{jk}\right]\a(a_{ki}^{*})\right\}
\end{split}
\label{eq:invariance_under_mod_aut_gr__E_u_a_star_1}
\end{equation}
by the Tomita--Takesaki theory. Denoting $G_{1}:=E^{-1}F$ and $G_{2}:=FE$,
we get
\begin{equation}
\begin{split}E_{u}(a)^{*\mathrm{t}} & =(\i\tensor\i\tensor h)\left((G_{1}\tensor\one\tensor\one)\overline{u}_{13}(G_{2}\tensor\one\tensor\one)(\i\tensor\a)(a^{*\mathrm{t}})\right)\\
 & =(G_{1}\tensor\one)(\i\tensor\i\tensor h)\left[\overline{u}_{13}(\i\tensor\a)\left((G_{2}\tensor\one)a^{*\mathrm{t}}\right)\right]\\
 & =(G_{1}\tensor\one)E_{\overline{u}}\left((G_{2}\tensor\one)a^{*\mathrm{t}}\right).
\end{split}
\label{eq:invariance_under_mod_aut_gr__E_u_a_star_2}
\end{equation}
This implies that $A_{u}^{*}\subseteq A_{\overline{u}}$. The converse
inclusion is obtained by interchanging the roles of $u$ and $\overline{u}$.

\prettyref{enu:A_u__2} Given $a\in\Img E_{u^{\gamma}}$ and $b\in\Img E_{u^{\delta}}$,
that is, $(\i\tensor\a)(a)=u_{13}^{\gamma*}a_{12}$ and $(\i\tensor\a)(b)=u_{13}^{\delta*}b_{12}$,
we get
\[
(\i_{M_{n(\gamma)}}\tensor\i_{M_{n(\delta)}}\tensor\a)(a_{13}b_{23})=u_{14}^{\gamma*}a_{13}u_{24}^{\delta*}b_{23}=u_{14}^{\gamma*}u_{24}^{\delta*}a_{13}b_{23}
\]
Thus, for $c:=a_{13}b_{23}\in M_{n(\gamma)}\odot M_{n(\delta)}\odot N$
and the unitary co-representation $u:=u_{23}^{\delta}u_{13}^{\gamma}$
of $\HH$, we have $c\in\Img E_{u}$. Since $u$ decomposes (with
respect to some basis) as $u=u^{\mathbf{\be}_{1}}\oplus\ldots\oplus u^{\mathbf{\be}_{n}}$
for suitable $\be_{1},\ldots,\be_{n}\in\mathrm{Irred}(\HH)$ \citep{Woronowicz__symetries_quantiques},
the desired conclusion follows.\end{proof}
\begin{thm}
\label{thm:N_CAP_vN_alg}The space $N^{\CAP}$ is a von Neumann subalgebra
of $N$ (Question \ref{enu:question_1}).\end{thm}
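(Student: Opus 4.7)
The plan is to combine the structural results on the ``matrix element'' subspaces $A_{u^\gamma}$ with the density statement of \prettyref{prop:CAP_densities}. Set
\[
A := \linspan \bigcup_{\gamma \in \mathrm{Irred}(\HH)} A_{u^\gamma} \subseteq N^{\CAP}.
\]
By \prettyref{prop:A_u}, $A$ is a $*$-subalgebra of $N$: part (a) gives $A^* = A$ and part (b) gives $A \cdot A \subseteq A$. Moreover, $\one \in N^{\CAP}$, since $\a(\one) = \one \tensor \one$ makes $\C \Gamma(\one)$ a one-dimensional trivial sub-representation of $U$, trivially satisfying \prettyref{def:CAP}.

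Next, I would use \prettyref{prop:CAP_densities}\prettyref{enu:CAP_densities_4}, which asserts that $A$ is $\sigma$-weakly total in $N^{\CAP}$, together with the fact (noted as a remark preceding the three questions) that $N^{\CAP}$ is $\sigma$-weakly closed in $N$. Consequently $N^{\CAP}$ equals the $\sigma$-weak closure of $A$. Since the involution on $N$ is $\sigma$-weakly continuous and preserves $A$, it also preserves $N^{\CAP}$.

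For closure under multiplication, I would invoke separate $\sigma$-weak continuity of the product on $N$. First fix $b \in A$: the map $x \mapsto xb$ is $\sigma$-weakly continuous, sends $A$ into $A$, and hence sends the $\sigma$-weak closure $N^{\CAP}$ of $A$ into the $\sigma$-weak closure of $Ab \subseteq A$, which lies in $N^{\CAP}$. Then, for arbitrary $x \in N^{\CAP}$, the map $b \mapsto xb$ is likewise $\sigma$-weakly continuous; by the previous step it sends $A$ into $N^{\CAP}$, so it sends the $\sigma$-weak closure of $A$, namely $N^{\CAP}$, into $N^{\CAP}$. Combined with self-adjointness and containment of $\one$, this shows $N^{\CAP}$ is a $\sigma$-weakly closed unital $*$-subalgebra of $N$, i.e., a von Neumann subalgebra.

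There is no serious obstacle; the substantive work has already been carried out in \prettyref{prop:CAP_densities} and \prettyref{prop:A_u}. The only subtlety worth being explicit about is that joint $\sigma$-weak continuity of multiplication fails in general, which is why the two-step argument above (first with $b \in A$, then with arbitrary $x \in N^{\CAP}$) is needed in place of a single passage to the closure.
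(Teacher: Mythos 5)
Your proposal is correct and follows essentially the same route as the paper: the paper's proof simply observes that $\linspan\bigcup_{\gamma\in\mathrm{Irred}(\HH)}A_{u^{\gamma}}$ is a $*$-subalgebra of $N^{\CAP}$ by \prettyref{prop:A_u} and then invokes \prettyref{prop:CAP_densities}, \prettyref{enu:CAP_densities_4}. You have merely made explicit the standard closure argument (unit, self-adjointness, and separate $\sigma$-weak continuity of multiplication) that the paper leaves implicit.
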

\begin{proof}
The subspace $\linspan\bigcup_{\gamma\in\mathrm{Irred}(\HH)}A_{u^{\gamma}}$
is a $*$-subalgebra of $N^{\CAP}$ by \prettyref{prop:A_u}. Thus
the assertion follows from \prettyref{prop:CAP_densities}, \prettyref{enu:CAP_densities_4}.
\end{proof}
We now address Question \ref{enu:question_3}, providing a few circumstances
under which it has an affirmative answer.%

\begin{thm}
\label{thm:invariance_under_mod_aut_gr_LCQGs}The von Neumann algebra
$N^{\CAP}$ is globally invariant under $\sigma^{\om}$ if $\G$ is
a LCQG.\end{thm}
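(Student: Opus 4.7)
The strategy is to reduce the global invariance of $N^{\CAP}$ under $\sigma^{\om}$ to showing that, for each fixed $\gamma\in\mathrm{Irred}(\HH)$, the subspace $A_{u^{\gamma}}$ is stable under each $\sigma_t^{\om}$. Once this is done, \prettyref{prop:CAP_densities}, \prettyref{enu:CAP_densities_4} gives that $\bigcup_{\gamma}A_{u^{\gamma}}$ is weakly total in $N^{\CAP}$, and normality (hence weak continuity) of $\sigma_t^{\om}$ yields $\sigma_t^{\om}(N^{\CAP})\subseteq N^{\CAP}$; replacing $t$ by $-t$ then gives equality.

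The LCQG hypothesis enters through two ingredients. First, letting $\tau$ denote the scaling group of $\G$, the $\om$-invariance of $\a$ together with the Kustermans--Vaes theory (see Vaes' work on invariant weights for LCQG actions) yields the commutation
\[
\a\circ\sigma_t^{\om}=(\sigma_t^{\om}\tensor\tau_t)\circ\a\qquad(\A t\in\R).
\]
Second, because $\tau_t$ is a normal $*$-automorphism of $\Linfty{\G}$ satisfying $\Delta\circ\tau_t=(\tau_t\tensor\tau_t)\circ\Delta$, the element $(\i\tensor\tau_t)(u^{\gamma})$ is again a finite-dimensional unitary co-representation on the same space; by the connectedness of $\R$ it is equivalent to $u^{\gamma}$, so there is $M_t\in GL_{n(\gamma)}$ with
\[
(\i\tensor\tau_t)(u^{\gamma})=(M_t\tensor\one)\,u^{\gamma}\,(M_t^{-1}\tensor\one).
\]
In particular $\tau_t(C(\HH))=C(\HH)$.

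With these two ingredients in hand the core computation is short. Fix $b\in N_{u^{\gamma}}$, i.e.\ $(\i\tensor\a)(b)=u^{\gamma*}_{13}b_{12}$, and write $\tilde b:=(\i\tensor\sigma_t^{\om})(b)$. Apply $\i\tensor\sigma_t^{\om}$ to the defining equation and use the first displayed identity on leg $2$ while $\tau_t$ acts on leg $3$: the right-hand side becomes $((\i\tensor\tau_t)(u^{\gamma}))^{*}_{13}\tilde b_{12}$. Substituting the second display and multiplying on the left by $(M_t^{*}\tensor\one\tensor\one)$ (which commutes with $\i\tensor\a$ as it lives on leg $1$), one finds
\[
(\i\tensor\a)\bigl((M_t^{*}\tensor\one)\tilde b\bigr)=u^{\gamma*}_{13}\bigl((M_t^{*}\tensor\one)\tilde b\bigr)_{12},
\]
so that $c:=(M_t^{*}\tensor\one)\tilde b\in N_{u^{\gamma}}$. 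By \prettyref{lem:CAP_irred_H_P_E} the entries of $c$ lie in $A_{u^{\gamma}}$, and since $\tilde b=(M_t^{-1*}\tensor\one)c$, so do the entries of $\tilde b$. Thus $\sigma_t^{\om}(A_{u^{\gamma}})\subseteq A_{u^{\gamma}}$, completing the argument.

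The main obstacle is the careful justification of the intertwining $\a\circ\sigma_t^{\om}=(\sigma_t^{\om}\tensor\tau_t)\circ\a$ for an $\om$-preserving action of a LCQG; this is precisely the place where the LCQG assumption is indispensable (rather than a general Hopf--von Neumann algebra), since it provides the scaling group $\tau$ and its compatibility with $\Delta$. A possible alternative is to work instead with the compact quantum group $\HH$ produced in \prettyref{thm:compactification_CAP} and invoke the analogous intertwining relating $\sigma^{\om}$ and $\sigma^{h}$ for the induced CQG action on $N^{\CAP}$, \emph{à la} Boca; either route produces the matrix $M_t$ by which the calculation above is driven.
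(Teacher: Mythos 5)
Your overall strategy is the same as the paper's: reduce the claim to showing that each $A_{u^{\gamma}}$ is $\sigma_{t}^{\om}$-stable and conclude via \prettyref{prop:CAP_densities}, \prettyref{enu:CAP_densities_4}; feed the commutation of $\a$ with $\sigma^{\om}\tensor\tau$ (this is exactly \prettyref{thm:action_with_invariant_state_LCQG} of the appendix, which has $\tau_{-t}$ where you write $\tau_{t}$ --- a harmless sign/convention discrepancy, since your computation only needs \emph{some} invertible intertwiner) into the defining relation $(\i\tensor\a)(b)=u_{13}^{\gamma*}b_{12}$, and absorb the action of the scaling group on $u^{\gamma}$ into a matrix on the first leg. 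The resulting manipulation showing $c:=(M_{t}^{*}\tensor\one)(\i\tensor\sigma_{t}^{\om})(b)\in N_{u^{\gamma}}$ is precisely the paper's computation with $M_{t}$ playing the role of $F^{\mp it}$.

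There is, however, a genuine gap at the one point where the LCQG hypothesis must do real work: the existence of $M_{t}\in GL_{n(\gamma)}$ with $(\i\tensor\tau_{t})(u^{\gamma})=(M_{t}\tensor\one)u^{\gamma}(M_{t}^{-1}\tensor\one)$. Your justification --- ``$(\i\tensor\tau_{t})(u^{\gamma})$ is a finite-dimensional unitary co-representation, hence equivalent to $u^{\gamma}$ by connectedness of $\R$'' --- does not stand as written. First, $(\i\tensor\tau_{t}^{\G})(u^{\gamma})$ is a priori only a co-representation of $\G$ with entries in $\tau_{t}(C(\HH))$; you have not shown $\tau_{t}(C(\HH))\subseteq C(\HH)$ (indeed you deduce it \emph{from} the equivalence, which is circular), so the rigidity of $\mathrm{Irred}(\HH)$ coming from orthogonality of matrix coefficients under the Haar state is not available, and a general non-compact $\G$ has no discreteness of equivalence classes of finite-dimensional co-representations for a connectedness argument to exploit. (Note also that the natural attempt to implement $\tau_{t}$ on the right leg of $U$ via the appendix's relation $U(\nabla\tensor L^{-1})\subseteq(\nabla\tensor L^{-1})U$ leads back to asking whether $\nabla^{it}$ preserves $\H^{\CAP}$ --- the very statement being proved.) Second, even granting that the entries stay in $C(\HH)$, local constancy of the equivalence class in $t$ would still need a continuity argument you do not supply. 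The paper closes exactly this hole by citing Woronowicz's formula $(\i\tensor\tau_{t}^{\HH})(u^{\gamma})=(F^{it}\tensor\one)u^{\gamma}(F^{-it}\tensor\one)$ for the compact quantum group $\HH$ together with \citep[Proposition 5.45]{Kustermans_Vaes__LCQG_C_star} to transfer the same formula to $\tau_{t}^{\G}$. Your suggested fallback --- working with $\HH$ and the modular data of $h$ in the spirit of Boca \citep{Boca__ergodic_actions} --- is plausible but is not carried out; note that the paper's \prettyref{thm:invariance_under_mod_aut_gr} pursues a related route and requires an extra hypothesis to do so.
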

\begin{proof}
Denote by $\tau^{\G},\tau^{\HH}$ the scaling groups of $\G,\HH$,
respectively. Since $\a$ is an $\om$-invariant action of $\G$ on
$N$, we get, by \prettyref{thm:action_with_invariant_state_LCQG},
\[
(\sigma_{t}^{\om}\tensor\tau_{-t}^{\G})\circ\a=\a\circ\sigma_{t}^{\om}\qquad(\forall t\in\R).
\]
Fix $\gamma\in\mathrm{Irred}(\HH)$ and denote $u:=u^{\gamma}$. There
exists a positive definite $F\in GL_{n(\gamma)}$ such that $(\i\tensor\tau_{t}^{\HH})u=(F^{it}\tensor\one)u(F^{-it}\tensor\one)$
for each $t\in\R$ \citep{Woronowicz__symetries_quantiques}. Thus
\citep[Proposition 5.45]{Kustermans_Vaes__LCQG_C_star} implies that
$(\i\tensor\tau_{t}^{\G})u=(F^{it}\tensor\one)u(F^{-it}\tensor\one)$.
If $a\in\Img E_{u}$, namely $(\i\tensor\a)(a)=u_{13}^{*}a_{12}$,
then for every $t\in\R$, we obtain
\[
\begin{split}(\i\tensor\a\circ\sigma_{t}^{\om})(a) & =\left(\i\tensor(\sigma_{t}^{\om}\tensor\tau_{-t}^{\G})\circ\a\right)(a)=(\i\tensor\sigma_{t}^{\om}\tensor\tau_{-t}^{\G})(u_{13}^{*}a_{12})\\
 & =(F^{-it}\tensor\one\tensor\one)u_{13}^{*}(F^{it}\tensor\one\tensor\one)\left((\i\tensor\sigma_{t}^{\om})(a)\right)_{12}.
\end{split}
\]
Therefore
\[
(\i\tensor\a)\left[(\i\tensor\sigma_{t}^{\om})\left((F^{it}\tensor\one)a\right)\right]=u_{13}^{*}\left[(\i\tensor\sigma_{t}^{\om})\left((F^{it}\tensor\one)a\right)\right]_{12},
\]
proving that $(\i\tensor\sigma_{t}^{\om})\left((F^{it}\tensor\one)a\right)\in\Img E_{u}$.
Thus $A_{u}$ is globally invariant under $\sigma^{\om}$. The result
follows from \prettyref{prop:CAP_densities}, \prettyref{enu:CAP_densities_4}.\end{proof}
\begin{thm}
\label{thm:invariance_under_mod_aut_gr}The von Neumann algebra $N^{\CAP}$
is globally invariant under $\sigma^{\om}$ if the following is true:
for every $\gamma\in\mathrm{Irred}(\HH)$ there exists an invariant
mean $m$ on $\G$ such that for each $1\leq i,j\leq n(\gamma)$ there
is $v\in C(\HH)$ with $m(xu_{ij}^{\gamma})=m(vx)$ for all $x\in\Linfty{\G}$.
\end{thm}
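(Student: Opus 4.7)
The plan is to mirror the argument used in \prettyref{thm:invariance_under_mod_aut_gr_LCQGs}, with the hypothesis playing the role that the scaling group $\tau^{\G}$ does in the LCQG setting. By \prettyref{prop:CAP_densities}\prettyref{enu:CAP_densities_4} together with the weak continuity of $\sigma^{\om}_{t}$, it suffices to verify that $\sigma^{\om}_{t}(A_{u^{\gamma}})\subseteq N^{\CAP}$ for each $\gamma\in\mathrm{Irred}(\HH)$. Fix such a $\gamma$ and let $m$ and $v_{ij}^{\gamma}\in C(\HH)$ be as supplied by the hypothesis. The explicit formula for $E_{u^{\gamma}}$ in \prettyref{thm:genr_mean_ergodic}\prettyref{enu:genr_mean_ergodic__2} shows that $A_{u^{\gamma}}$ is linearly spanned by the elements
\[
c_{ij}(y):=(\i\tensor m)\bigl[\a(y)(\one\tensor u_{ij}^{\gamma})\bigr],\qquad y\in N,\ 1\le i,j\le n(\gamma).
\]

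The first manoeuvre is to apply the hypothesis: testing against an arbitrary $\phi\in N_{*}$ and setting $x:=(\phi\tensor\i)\a(y)$, the identity $m(xu_{ij}^{\gamma})=m(v_{ij}^{\gamma}x)$ yields the reformulation
\[
c_{ij}(y)=(\i\tensor m)\bigl[(\one\tensor v_{ij}^{\gamma})\a(y)\bigr].
\]
Because $\sigma^{\om}_{t}$ acts only on the first tensor factor, it slips past both the invariant mean and the insertion of $v_{ij}^{\gamma}$, giving
\[
\sigma^{\om}_{t}(c_{ij}(y))=(\i\tensor m)\bigl[(\one\tensor v_{ij}^{\gamma})(\sigma^{\om}_{t}\tensor\i)\a(y)\bigr].
\]

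The main obstacle is that, in the absence of a scaling group, $(\sigma^{\om}_{t}\tensor\i)\a\neq\a\circ\sigma^{\om}_{t}$ in general, so the right-hand side is not immediately of the form $c_{ij}(\sigma^{\om}_{t}(y))$. To overcome this I would expand $v_{ij}^{\gamma}\in C(\HH)$ in its Peter--Weyl form as a norm-limit of finite linear combinations of matrix entries $u_{k\ell}^{\gamma'}$ of irreducible co-representations of the compact quantum group $\HH$ from \prettyref{thm:compactification_CAP}, and then apply the hypothesis in the reverse direction (available via the conjugate representations $\overline{u^{\gamma'}}$ of \prettyref{prop:A_u}\prettyref{enu:A_u__1}) to move each such entry from the left back to the right of $\a(y)$. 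Combined with the left-invariance of $m$, which collapses the composition of invariant means produced by the successive substitutions, each resulting summand will take the form $c^{\gamma'}_{pq}(y')$ for some $y'\in N$ and thus lie in $A_{u^{\gamma'}}\subseteq N^{\CAP}$. Norm continuity of the Peter--Weyl expansion then finishes the argument.
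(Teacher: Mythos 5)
You correctly identify the main obstacle --- that without a scaling group there is no identity of the form $(\sigma_{t}^{\om}\tensor\tau_{-t})\circ\a=\a\circ\sigma_{t}^{\om}$ --- but your proposed workaround does not overcome it. After your first manoeuvre you are left with $\sigma_{t}^{\om}(c_{ij}(y))=(\i\tensor m)\bigl[(\one\tensor v_{ij}^{\gamma})(\sigma_{t}^{\om}\tensor\i)\a(y)\bigr]$, and the problematic factor is the middle term $(\sigma_{t}^{\om}\tensor\i)\a(y)$, which is not of the form $\a(y')$ for any $y'\in N$. Expanding $v_{ij}^{\gamma}$ in Peter--Weyl form and shuttling matrix entries of irreducibles from the left of this factor to its right (or back) only changes which element of $C(\HH)$ sits beside it; it never converts $(\sigma_{t}^{\om}\tensor\i)\a(y)$ into $\a(y')$, so none of the resulting summands has the form $c_{pq}^{\gamma'}(y')$, and the final step of your plan cannot be carried out. (The remark that left-invariance of $m$ ``collapses the composition of invariant means'' does not correspond to any identifiable computation either: only one mean is ever applied.) A smaller slip: by \prettyref{thm:genr_mean_ergodic} the entries of $E_{u^{\gamma}}(a)$ are $\sum_{k}(\i\tensor m)\bigl[(\one\tensor u_{ik}^{\gamma})\a(a_{kj})\bigr]$, with $u^{\gamma}$ on the \emph{left}; your $c_{ij}(y)$ puts it on the right. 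That is repairable via the hypothesis, but it suggests the bookkeeping was not carried through.

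The paper's proof takes a different, modular-theoretic route that never needs to commute $\sigma_{t}^{\om}$ past $\a$. The hypothesis on $m$ is used exactly once: to extend the identity $E_{u}(a)^{*\mathrm{t}}=(G_{1}\tensor\one)E_{\overline{u}}\bigl((G_{2}\tensor\one)a^{*\mathrm{t}}\bigr)$ from $a\in\Img E_{u}$ (where \prettyref{prop:A_u} already gives it) to \emph{all} $a\in M_{n}\odot N$. Applying $\Xi$ turns this into the inclusion of antilinear operators $(G_{1}\tensor\one)P_{\overline{u}}(G_{2}\tensor\one)(A\tensor S)\subseteq(A\tensor S)P_{u}$, where $S=J\nabla^{1/2}$ and $A$ is the entrywise conjugation on $\H_{\gamma}$; analytic-generator arguments and self-adjointness of the projections then yield $(\i\tensor\sigma_{t}^{\om})(P_{\overline{u}})=(F^{it}\tensor\one)P_{\overline{u}}(F^{-it}\tensor\one)$ after normalizing the representative of $\overline{u}$, whence $\sigma_{t}^{\om}(A_{\overline{u}})\subseteq A_{\overline{u}}$. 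To salvage your approach you would need some substitute for this passage through the modular operator; elementary manipulation of $m$ and matrix entries alone will not suffice.
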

Of course, this condition holds trivially if $\G$ is classical, namely
$\Linfty{\G}$ is commutative.
\begin{proof}
First, note that $v=\sigma_{i}^{h}(u_{ij}^{\gamma})$ in the theorem's
condition by \prettyref{cor:m_faithful_on_H}. We use the notation
and results of \prettyref{prop:A_u}. Fix $\gamma\in\mathrm{Irred}(\HH)$,
and write $u,n$ for $u^{\gamma},n(\gamma)$, respectively. Let $m$
be the invariant mean suitable for $\overline{u}$ as above. In the
proof of \prettyref{prop:A_u}, \prettyref{enu:A_u__1}, we made the
assumption that $a\in\Img E_{u}$. This cannot be done in the present
proof. Taking an arbitrary $a=(a_{ij})_{i,j=1}^{n}\in M_{n}\odot N$,
we repeat the calculations of \prettyref{prop:A_u} with $m$ replacing
$h$. The maneuver of \prettyref{eq:invariance_under_mod_aut_gr__E_u_a_star_1}
is now justified by the assumption on $m$, and we infer from \prettyref{eq:invariance_under_mod_aut_gr__E_u_a_star_2}
that $E_{u}(a)^{*\mathrm{t}}=(G_{1}\tensor\one)E_{\overline{u}}\left((G_{2}\tensor\one)a^{*\mathrm{t}}\right)$.

Let $J,\nabla$ denote the modular conjugation and modular operator,
respectively, associated with $(N,\om)$. That is, the closure $S$
of the conjugate-linear map $\Gamma(x)\mapsto\Gamma(x^{*})$, $x\in N$,
over $\H$, has polar decomposition $S=J\nabla^{1/2}$. Write $A$
for the componentwise complex conjugation map $\Gamma_{\gamma}\left((a_{ij})\right)\mapsto\Gamma_{\gamma}\left((\overline{a_{ij}})\right)$
over $\H_{\gamma}$. Then by the previous paragraph and \prettyref{thm:genr_mean_ergodic},
\[
\begin{split}(A\tensor S)P_{u}\Xi(a) & =(A\tensor S)\Xi(E_{u}(a))=\Xi(E_{u}(a)^{*\mathrm{t}})\\
 & =\Xi\left[(G_{1}\tensor\one)E_{\overline{u}}\left((G_{2}\tensor\one)a^{*\mathrm{t}}\right)\right]=(G_{1}\tensor\one)\Xi\left[E_{\overline{u}}\left((G_{2}\tensor\one)a^{*\mathrm{t}}\right)\right]\\
 & =(G_{1}\tensor\one)P_{\overline{u}}\left((G_{2}\tensor\one)\Xi(a^{*\mathrm{t}})\right)=(G_{1}\tensor\one)P_{\overline{u}}\left((G_{2}\tensor\one)(A\tensor S)\Xi(a)\right)
\end{split}
\]
for all $a\in M_{n}\odot N$. Therefore
\[
(G_{1}\tensor\one)P_{\overline{u}}(G_{2}\tensor\one)(A\tensor S)\subseteq(A\tensor S)P_{u}.
\]
As $S=J\nabla^{1/2}=\nabla^{-1/2}J$, we have
\[
(G_{1}\tensor\one)P_{\overline{u}}(G_{2}\tensor\one)(A\tensor\nabla^{-1/2}J)\subseteq(A\tensor\nabla^{-1/2}J)P_{u},
\]
and thus
\[
P_{\overline{u}}(\one\tensor\nabla^{-1/2})\subseteq(\one\tensor\nabla^{-1/2})(G_{1}^{-1}\tensor\one)(A\tensor J)P_{u}(A\tensor J)(G_{2}^{-1}\tensor\one).
\]
Recall that $\sigma_{t}^{\om}=\Ad{\nabla^{it}}|_{N}\in\mathrm{Aut}(N)$
for all $t\in\R$. Abusing notation slightly and letting $\sigma_{t}^{\om}:=\Ad{\nabla^{it}}\in\mathrm{Aut}(B(\H))$
for $t\in\R$, we deduce that
\begin{equation}
P_{\overline{u}}\in D(\i\tensor\sigma_{-i/2}^{\om})\quad\text{and}\quad(\i\tensor\sigma_{-i/2}^{\om})(P_{\overline{u}})=(G_{1}^{-1}\tensor\one)(A\tensor J)P_{u}(A\tensor J)(G_{2}^{-1}\tensor\one)\label{eq:invariance_under_mod_aut_gr}
\end{equation}
(see \citep[Theorem 6.2]{Cioranescu_Zsido__analytic_gen}). As $P_{u},P_{\overline{u}}$
are selfadjoint, using \prettyref{eq:invariance_under_mod_aut_gr}
twice we obtain $P_{\overline{u}}\in D(\i\tensor\sigma_{i/2}^{\om})$
and
\begin{multline*}
(\i\tensor\sigma_{i/2}^{\om})(P_{\overline{u}})=\left[(\i\tensor\sigma_{-i/2}^{\om})(P_{\overline{u}})\right]^{*}=(G_{2}^{-1*}\tensor\one)(A\tensor J)P_{u}(A\tensor J)(G_{1}^{-1*}\tensor\one)\\
\begin{split} & =(G_{2}^{-1*}\tensor\one)(A\tensor J)(A\tensor J)(G_{1}\tensor\one)(\i\tensor\sigma_{-i/2}^{\om})(P_{\overline{u}})(G_{2}\tensor\one)(A\tensor J)(A\tensor J)(G_{1}^{-1*}\tensor\one)\\
 & =(G_{2}^{-1*}G_{1}\tensor\one)(\i\tensor\sigma_{-i/2}^{\om})(P_{\overline{u}})(G_{2}G_{1}^{-1*}\tensor\one).
\end{split}
\end{multline*}
This entails that $P_{\overline{u}}\in D(\i\tensor\sigma_{i}^{\om})$
and
\[
(\i\tensor\sigma_{i}^{\om})(P_{\overline{u}})=(G_{2}^{-1*}G_{1}\tensor\one)P_{\overline{u}}(G_{2}G_{1}^{-1*}\tensor\one).
\]
Since $u^{*\mathrm{t}}$ is a (normally not unitary) finite-dimensional
co-representation of $\HH$, from the proof of \citep[Proposition 6.4]{Maes_van_Daele__notes_CQGs}
follows the existence of a positive definite $F\in GL_{n}$ such that
$w:=(F^{1/2}\tensor\one)u^{*\mathrm{t}}(F^{-1/2}\tensor\one)$ is
a unitary co-representation of $\HH$ (which is equivalent to $\overline{u}$)
and $(\i\tensor\sigma_{i}^{h})w=(F\tensor\one)w(F\tensor\one)$. Therefore,
replacing $\overline{u}$ by an equivalent unitary co-representation
of $\HH$ if needed, we may assume that $E=F^{1/2}$ in the notation
of the proof of \prettyref{prop:A_u}, \prettyref{enu:A_u__1}. Thus
$G_{1}=F^{1/2}$ and $G_{2}=F^{3/2}$, so that $G_{2}^{-1*}G_{1}=F^{-1}=(G_{2}G_{1}^{-1*})^{-1}$.
As a result,
\[
(\i\tensor\sigma_{i}^{\om})(P_{\overline{u}})=(F^{-1}\tensor\one)P_{\overline{u}}(F\tensor\one).
\]
Consequently, $(\i\tensor\sigma_{t}^{\om})(P_{\overline{u}})=(F^{it}\tensor\one)P_{\overline{u}}(F^{-it}\tensor\one)$
for every $t\in\R$. Thus, for every $a\in M_{n}\odot N$,
\begin{multline*}
\Xi E_{\overline{u}}((\i\tensor\sigma_{t}^{\om})(a))=P_{\overline{u}}(\one\tensor\nabla^{it})\Xi(a)\\
=(\one\tensor\nabla^{it})(F^{-it}\tensor\one)P_{\overline{u}}(F^{it}\tensor\one)\Xi(a)=\Xi(\i\tensor\sigma_{t}^{\om})((F^{-it}\tensor\one)E_{\overline{u}}((F^{it}\tensor\one)a)),
\end{multline*}
and by injectivity of $\Xi$, $(F^{it}\tensor\one)E_{\overline{u}}((\i\tensor\sigma_{t}^{\om})(a))=(\i\tensor\sigma_{t}^{\om})E_{\overline{u}}((F^{it}\tensor\one)a)$.
Hence $(\i\tensor\sigma_{t}^{\om})E_{\overline{u}}(a)=(F^{it}\tensor\one)E_{\overline{u}}((\i\tensor\sigma_{t}^{\om})((F^{-it}\tensor\one)a))$
for all $a\in M_{n}\odot N$, and so $\sigma_{t}^{\om}(A_{\overline{u}})\subseteq A_{\overline{u}}$.
The result follows from \prettyref{prop:CAP_densities}, \prettyref{enu:CAP_densities_4}.\end{proof}
\begin{rem}
The mean mentioned in \prettyref{thm:invariance_under_mod_aut_gr}
has the flavor of a hypertrace in the sense of Connes \citep{Connes__classification_of_inj_factors}.
Obviously, an invariant mean $m$ on $\G$ admitting an $m$-preserving
conditional expectation from $\Linfty{\G}$ onto $C(\HH)$ satisfies
the indicated condition for every $\gamma\in\mathrm{Irred}(\HH)$.
But a conditional expectation from $\Linfty{\G}$ onto $C(\HH)$ normally
does not exist: for instance, taking $\G:=\Z_{+}$ with an action
yielding $\HH\cong\mathbb{T}$, we would be looking for a conditional
expectation from $\ell_{\infty}$ onto $c_{0}$, which does not exist
as $c_{0}$ is not complemented in $\ell_{\infty}$. The existence
of such a conditional expectation is, nevertheless, not necessary
for the assumptions of \prettyref{thm:invariance_under_mod_aut_gr}
to hold. Notice further that we cannot replace $C(\HH)$ by $\Linfty{\HH}$,
because even in the classical theory, the latter cannot normally be
quantum embedded in $\Linfty{\G}$. \end{rem}
\begin{prop}
\label{prop:invariant_mean_H_discrete}The condition of \prettyref{thm:invariance_under_mod_aut_gr}
holds if $\HH$ is co-commutative, i.e., if it is the dual of a discrete
group.\end{prop}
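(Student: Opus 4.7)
The plan is to exploit the co-commutative structure. When $\HH$ is the dual of a discrete group $\Gamma$, co-amenability of $\HH$ (inherited from $\G$) forces $\Gamma$ to be amenable, so $C(\HH)$ identifies with $C_{r}^{*}(\Gamma)$ and the Haar state $h$ is the canonical trace. The irreducible unitary co-representations of $\HH$ are then all one-dimensional, given by the group-like unitaries $\lambda_{g}\in C(\HH)$, $g\in\Gamma$, satisfying $\Delta(\lambda_{g})=\lambda_{g}\tensor\lambda_{g}$; thus each $u^{\gamma}=u_{11}^{\gamma}$ is of the form $\lambda_{g(\gamma)}$. Since $h$ is tracial, $\sigma^{h}$ is trivial and the element $v$ demanded by \prettyref{thm:invariance_under_mod_aut_gr} must simply be $v=\lambda_{g(\gamma)}$ itself. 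So what I have to produce is a single invariant mean $m$ on $\G$ satisfying $m(x\lambda_{g})=m(\lambda_{g}x)$ for all $x\in\Linfty{\G}$ and $g\in\Gamma$.

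The key observation is that each $\lambda_{g}\in\Linfty{\G}$ is a unitary with $\Delta(\lambda_{g})=\lambda_{g}\tensor\lambda_{g}$, which gives $\Delta\circ\Ad{\lambda_{g}}=(\Ad{\lambda_{g}}\tensor\Ad{\lambda_{g}})\circ\Delta$; hence $\Ad{\lambda_{g}}$ is an automorphism of $\G$ as a Hopf--von Neumann algebra. A short computation then shows that $m\mapsto m\circ\Ad{\lambda_{g}}$ maps (two-sided) invariant means to (two-sided) invariant means, so that $g\cdot m:=m\circ\Ad{\lambda_{g^{-1}}}$ defines a $\Gamma$-action by affine $w^{*}$-continuous maps on the nonempty, convex, $w^{*}$-compact set $X$ of invariant means on $\G$. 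Since $\Gamma$ is amenable, Day's fixed-point theorem furnishes an $m\in X$ fixed by the full action, i.e.\ $m(\lambda_{g}x\lambda_{g}^{*})=m(x)$ for all $x\in\Linfty{\G}$ and $g\in\Gamma$, equivalent to the desired identity $m(\lambda_{g}x)=m(x\lambda_{g})$.

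The main technical step is the invariance check for $m\circ\Ad{\lambda_{g}}$. The idea is to apply $(\i\tensor\Ad{\lambda_{g}^{*}})$ to the Hopf relation $\Delta(\Ad{\lambda_{g}}(y))=(\Ad{\lambda_{g}}\tensor\Ad{\lambda_{g}})\Delta(y)$, which gives $(\Ad{\lambda_{g}}\tensor\i)\Delta(y)=(\i\tensor\Ad{\lambda_{g}^{*}})\Delta(\Ad{\lambda_{g}}(y))$; applying $(\i\tensor\om)$ and then $m$, and using left-invariance of $m$ together with $(\om\circ\Ad{\lambda_{g}^{*}})(\one)=\om(\one)$, yields $(m\circ\Ad{\lambda_{g}})((\i\tensor\om)\Delta(y))=\om(\one)(m\circ\Ad{\lambda_{g}})(y)$, and the right-invariant case is symmetric. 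Once this is in place, convexity and $w^{*}$-compactness of $X$ and the affineness and $w^{*}$-continuity of the $\Gamma$-action are all immediate, and Day's theorem concludes.
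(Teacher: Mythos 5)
Your proof is correct and follows essentially the same route as the paper's: both let the discrete group act on the $w^{*}$-compact convex set of invariant means by conjugation with the group-like unitaries $\lambda_{g}$, verify using $\Delta(\lambda_{g})=\lambda_{g}\tensor\lambda_{g}$ and invariance of $m$ that this action is well defined, and then invoke Day's fixed point theorem via amenability of the group (equivalent to co-amenability of $\HH$) to obtain a mean with $m(\lambda_{g}x\lambda_{g}^{*})=m(x)$. Your opening reduction --- that the irreducibles are one-dimensional and $h$ is tracial, so $v=\lambda_{g}$ --- is a correct elaboration of what the paper leaves implicit in its closing sentence.
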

\begin{proof}
Let $m$ be any invariant mean on $\G$. Let $H$ be the discrete
group so that $\HH=\hat{H}$ (\prettyref{exa:basic_LCQGs}, \prettyref{enu:co_comm_LCQG}).
Co-amenability of $\HH$ is equivalent to amenability of $H$. Denote
by $\left(\l_{h}\right)_{h\in H}$ the translation maps on $H$, and
recall that $\Delta_{\hat{H}}(\l_{h})=\l_{h}\tensor\l_{h}$ for every
$h\in H$. Let $K$ stand for the $w^{*}$-compact convex subset of
$\Linfty{\G}^{*}$ consisting of all invariant means on $\G$. Consider
the action of $H$ on $K$ given by $(h\cdot m)(x):=m(\l_{h}x\l_{h^{-1}})$
($h\in H$, $m\in K$, $x\in\Linfty{\G}$). This action is indeed
well defined, as for every $x\in\Linfty{\G}$ and $\theta\in\Lone{\G}$
we have, by left invariance of $m$,
\[
\begin{split}(h\cdot m)\left((\theta\tensor\i)\Delta(x)\right) & =m\left[\l_{h}(\theta\tensor\i)\Delta(x)\l_{h^{-1}}\right]=m\left\{ (\theta\tensor\i)\left[(\one\tensor\l_{h})\Delta(x)(\one\tensor\l_{h^{-1}})\right]\right\} \\
 & =m\left\{ (\theta\tensor\i)\left[(\l_{h^{-1}}\tensor\one)\Delta(\l_{h}x\l_{h^{-1}})(\l_{h}\tensor\one)\right]\right\} \\
 & =m\left\{ \left[\theta(\l_{h^{-1}}\cdot\l_{h})\tensor\i\right]\Delta(\l_{h}x\l_{h^{-1}})\right\} \\
 & =\theta(\l_{h^{-1}}\cdot\l_{h})(\one)m(\l_{h}x\l_{h^{-1}})=\theta(\one)(h\cdot m)(x).
\end{split}
\]
Therefore, the mean $h\cdot m$ is left invariant, and similarly it
is also right invariant. The action of $H$ on $K$ is plainly affine
and separately continuous ($H$ is discrete!). Thus, by Day's fixed
point theorem \citep[Theorem 1.3.1]{Runde__book_amenability}, there
exists $m\in K$ with $h\cdot m=m$ for all $h\in H$, namely $m(\l_{h}x\l_{h^{-1}})=m(x)$
for all $x\in\Linfty{\G}$ and $h\in H$. As $\left(\l_{h}\right)_{h\in H}$
forms a complete family of irreducible unitary co-representations
of $\HH$, we are done.
\end{proof}

Our ultimate purpose of giving a von Neumann algebraic version of
the Jacobs--de~Leeuw--Glicksberg splitting theorem is achieved in
terms of conditional expectations as follows. The precise nature of
the image of $\i-E^{\CAP}$, namely the \emph{weakly mixing operators},
will be the subject of another paper.
\begin{cor}
\label{cor:E_CAP}Assume that either $\G$ is a LCQG or the condition
of \prettyref{thm:invariance_under_mod_aut_gr} holds. Then there
exists a unique $\om$-preserving conditional expectation $E^{\CAP}$
from $N$ onto $N^{\CAP}$. Moreover, $E^{\CAP}$ is faithful and
normal, and denoting by $P^{\CAP}$ the projection of $\H$ onto $\H^{\CAP}$,
we have $\Gamma\circ E^{\CAP}=P^{\CAP}\circ\Gamma$.\end{cor}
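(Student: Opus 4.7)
The plan is to invoke Takesaki's theorem on the existence of conditional expectations. By Theorem \ref{thm:N_CAP_vN_alg}, $N^{\CAP}$ is a von Neumann subalgebra of $N$. Under either hypothesis of the corollary, Theorem \ref{thm:invariance_under_mod_aut_gr_LCQGs} or Theorem \ref{thm:invariance_under_mod_aut_gr} guarantees that $N^{\CAP}$ is globally invariant under the modular automorphism group $\sigma^{\om}$. Takesaki's classical theorem then yields a unique normal $\om$-preserving conditional expectation $E^{\CAP}\colon N\to N^{\CAP}$. Faithfulness of $E^{\CAP}$ is immediate from faithfulness of $\om$: if $a\in N^+$ satisfies $E^{\CAP}(a)=0$, then $\om(a)=\om(E^{\CAP}(a))=0$, forcing $a=0$.

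It remains to identify $E^{\CAP}$ at the Hilbert-space level. I would show the identity $\Gamma\circ E^{\CAP}=P^{\CAP}\circ\Gamma$ directly. First, $\Gamma(E^{\CAP}(a))\in\Gamma(N^{\CAP})\subseteq\H^{\CAP}$ for every $a\in N$, so $\Gamma\circ E^{\CAP}$ takes values in $\H^{\CAP}$. Second, for every $a\in N$ and every $b\in N^{\CAP}$, the bimodule property of $E^{\CAP}$ together with $\om\circ E^{\CAP}=\om$ gives
\[
\left\langle \Gamma(E^{\CAP}(a)),\Gamma(b)\right\rangle =\om(b^{*}E^{\CAP}(a))=\om(E^{\CAP}(b^{*}a))=\om(b^{*}a)=\left\langle \Gamma(a),\Gamma(b)\right\rangle .
\]
Since $\Gamma(N^{\CAP})$ is dense in $\H^{\CAP}$ by \prettyref{prop:CAP_densities}\prettyref{enu:CAP_densities_3}, this shows that $\Gamma(E^{\CAP}(a))$ is the orthogonal projection of $\Gamma(a)$ onto $\H^{\CAP}$, i.e., $\Gamma(E^{\CAP}(a))=P^{\CAP}\Gamma(a)$.

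Uniqueness of $E^{\CAP}$ follows from Takesaki's theorem, but can also be read off from the displayed formula: $\Gamma$ is injective (as $\om$ is faithful), so $E^{\CAP}$ is determined on $N$ by $P^{\CAP}$. There is essentially no real obstacle here; the work has already been done in Theorems \ref{thm:N_CAP_vN_alg}, \ref{thm:invariance_under_mod_aut_gr_LCQGs} and \ref{thm:invariance_under_mod_aut_gr}, and in \prettyref{prop:CAP_densities}. The only point that deserves mention is that one must use the variant of Takesaki's theorem that produces a normal faithful expectation given a globally $\sigma^{\om}$-invariant subalgebra and a faithful normal invariant state, which is the standard form.
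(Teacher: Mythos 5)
Your proposal is correct and follows the same route as the paper: invoke Theorem \ref{thm:N_CAP_vN_alg} for the von Neumann subalgebra structure, Theorems \ref{thm:invariance_under_mod_aut_gr_LCQGs} or \ref{thm:invariance_under_mod_aut_gr} for $\sigma^{\om}$-invariance, and Takesaki's theorem together with Proposition \ref{prop:CAP_densities}\ref{enu:CAP_densities_3}. The paper delegates the identification $\Gamma\circ E^{\CAP}=P^{\CAP}\circ\Gamma$ to ``Takesaki's theorem and its proof,'' whereas you spell out that computation via the bimodule property and the density of $\Gamma(N^{\CAP})$ in $\H^{\CAP}$; this is a welcome but inessential elaboration of the same argument.
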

\begin{proof}
By virtue of \prettyref{thm:invariance_under_mod_aut_gr_LCQGs} or
\prettyref{thm:invariance_under_mod_aut_gr} as well as \prettyref{prop:CAP_densities},
\prettyref{enu:CAP_densities_3}, the assertion follows from Takesaki's
theorem and its proof (\citep{Takesaki__cond_exp}, see also \citep{Stratila__mod_thy,Takesaki__book_vol_2}).\end{proof}
\begin{rem}
\label{rem:E_CAP_when_N_finite}If $N$ is finite and $\om$ is a
(finite) trace, the result of \prettyref{cor:E_CAP} holds trivially
(by Takesaki's theorem) without any assumption on $\G$.\end{rem}
\begin{problem}
Determine when the condition of \prettyref{thm:invariance_under_mod_aut_gr}
holds. Specifically, does it hold when $\G$ is a LCQG?
\end{problem}

\begin{problem}
Determine whether the generalized Day's fixed point theorem for amenable
LCQGs (see \citep[Th\'{e}or\`{e}me 2.4, (xii)]{Enock_Schwartz__amenable_Kac_alg}
for Kac algebras; the proof for general LCQGs is identical) can be
applied to produce a generalization of \prettyref{prop:invariant_mean_H_discrete}
for every possible $\HH$, or at least when $\HH$ is a Kac algebra
(equivalently, $h$ is tracial).
\end{problem}
\appendix

\section{State-preserving actions of LCQGs}

The result that we prove in this appendix has appeared implicitly
in several publications, as have some similar results. For instance,
\citep[Proposition 2.4]{Vaes__unit_impl_LCQG} involves a $\delta^{-1}$-invariant
weight and a corresponding co-representation; it is asserted that
the proof is similar to that of \citep[Th\'{e}or\`{e}me 2.9]{Enock__sous_facteurs},
although the second condition in \citep[D\'{e}finition 2.7]{Enock__sous_facteurs}
appears to be missing from \citep[Definition 2.3]{Vaes__unit_impl_LCQG}.
As a second example, in the proof of \citep[Theorem 2.11]{Vaes_Vainerman__low_dim_LCQGs},
the authors used (the $\one$-invariant, rather than $\delta^{-1}$-invariant,
version of) \citep[Proposition 4.3]{Vaes__unit_impl_LCQG} to prove
that their operator $\hat{Z}_{1}$ is the canonical implementing unitary
of the action $\hat{\mu}$, without referring to the second condition
in \citep[D\'{e}finition 2.7]{Enock__sous_facteurs}.

To conclude, \prettyref{thm:action_with_invariant_state_LCQG} is
clearly known to the experts, but since we could not find an explicit
reference, we include here the full proof for completeness. The ideas
are by no means new: they are taken from Kustermans and Vaes \citep[\S 5]{Kustermans_Vaes__LCQG_C_star}.
More general statements, related to the foregoing examples, can be
proved in a similar fashion.
\begin{thm}
\label{thm:action_with_invariant_state_LCQG}Let $\G$ be a LCQG,
$N$ a von Neumann algebra, $\om$ a faithful normal state of $N$
and $\a:N\to N\tensorn\Linfty{\G}$ an $\om$-preserving action of
$\G$ on $N$. Then with $\tau$ being the scaling group of $\G$,
we have
\[
(\sigma_{t}^{\om}\tensor\tau_{-t})\circ\a=\a\circ\sigma_{t}^{\om}\qquad(\forall t\in\R).
\]
\end{thm}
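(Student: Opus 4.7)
The plan is to work on the GNS Hilbert space $\H$ of $(N,\om)$ and exploit the unitary implementation of $\a$. Recall the isometry $U\in B(\H)\tensorn\Linfty{\G}$ defined in \prettyref{sec:prelim_notation}; since $\om$ is invariant under $\a$ and $\G$ is a LCQG, $U$ is in fact a unitary co-representation of $\G$, and viewing $N$ in standard form on $\H$ one has the implementation $\a(a)=U(a\tensor\one)U^{*}$ for all $a\in N$. The modular operator $\nabla$ of $\om$ lives on the same $\H$ and satisfies $\sigma_{t}^{\om}=\mathrm{Ad}(\nabla^{it})|_{N}$, which extends canonically to $B(\H)$.

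The central step, and the real content of the theorem, is to establish the intertwining relation
\[
(\i\tensor\tau_{t})(U)=(\nabla^{it}\tensor\one)\,U\,(\nabla^{-it}\tensor\one)\qquad(\forall t\in\R).
\]
This is an incarnation of Kustermans--Vaes \citep[Proposition 5.45]{Kustermans_Vaes__LCQG_C_star}, which associates to any unitary co-representation of a LCQG a canonical positive self-adjoint operator (the ``analytic generator'') on the representing Hilbert space that intertwines the co-representation with the scaling group. The task is to identify this generator with $\nabla$. The $\a$-invariance of $\om$ is precisely what forces this identification: it tells us that $\Gamma(\one)$ is a cyclic and separating vector for $N$ whose modular data gives back $\nabla$, and that the ``left leg'' of $U$ consisting of operators $(\i\tensor\theta)(U)$ fixes $\Gamma(\one)$ (indeed $(\i\tensor\theta)(U)\Gamma(\one)=\Gamma(\theta(\one)\one)=\theta(\one)\Gamma(\one)$). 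One then runs the Kustermans--Vaes argument with this specific invariant vector; the uniqueness of the generator yields the asserted equality.

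Once the intertwining relation is in hand, the result is a direct computation. Writing $(\sigma_{t}^{\om}\tensor\tau_{-t})(X)=(\nabla^{it}\tensor\one)\bigl[(\i\tensor\tau_{-t})(X)\bigr](\nabla^{-it}\tensor\one)$ for $X\in B(\H)\tensorn\Linfty{\G}$, and applying this to $X=\a(a)=U(a\tensor\one)U^{*}$:
\[
\begin{split}
(\sigma_{t}^{\om}\tensor\tau_{-t})\a(a) & =(\nabla^{it}\tensor\one)\bigl[(\i\tensor\tau_{-t})(U)\bigr](a\tensor\one)\bigl[(\i\tensor\tau_{-t})(U^{*})\bigr](\nabla^{-it}\tensor\one)\\
 & =(\nabla^{it}\tensor\one)(\nabla^{-it}\tensor\one)U(\nabla^{it}a\nabla^{-it}\tensor\one)U^{*}(\nabla^{it}\tensor\one)(\nabla^{-it}\tensor\one)\\
 & =U(\sigma_{t}^{\om}(a)\tensor\one)U^{*}=\a(\sigma_{t}^{\om}(a)),
\end{split}
\]
using the intertwining with $t$ replaced by $-t$ so that the $\nabla^{\pm it}$ terms telescope.

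The main obstacle, which is essentially the entire mathematical content, is the intertwining relation and thus the Kustermans--Vaes machinery behind it: one must quote (and, to keep the appendix self-contained, adapt) the fact that the scaling group acts on a unitary co-representation by conjugation with powers of a canonical positive operator, plus the identification of that operator with $\nabla$ coming from $\om$-invariance. As the authors note in the preamble to the appendix, the remaining steps are formal and the computation above is standard.
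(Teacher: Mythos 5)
You have correctly isolated the crux: the theorem reduces to the intertwining relation $(\i\tensor\tau_{t})(U)=(\nabla^{it}\tensor\one)U(\nabla^{-it}\tensor\one)$ --- in the paper's notation, the statement that $\nabla\tensor L^{-1}$ commutes with $U$ --- and your concluding computation from that relation is exactly the one in the paper. The gap is that you do not actually prove the intertwining relation. The appeal to \citep[Proposition 5.45]{Kustermans_Vaes__LCQG_C_star} plus ``uniqueness of the generator'' cannot work: even when some positive operator $P$ with $(\i\tensor\tau_{t})(U)=(P^{it}\tensor\one)U(P^{-it}\tensor\one)$ exists, it is never unique, since any $P'$ for which $(P')^{it}P^{-it}\tensor\one$ commutes with $U$ serves equally well. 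The Kac case makes this concrete: there $\tau$ is trivial, so $P=\one$ satisfies the relation vacuously, while the assertion to be proved is that $P=\nabla$ also does, i.e.\ that $U$ commutes with $\nabla^{it}\tensor\one$; this is a genuine statement about $U$ and the modular theory of $(N,\om)$, not a consequence of any uniqueness. Likewise, your observation that $(\i\tensor\theta)(U)$ fixes $\Gamma(\one)$ follows from unitality of $\a$ alone and carries no information about $\nabla$ or $J$, so it cannot ``force the identification.''

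What is actually needed, and what the paper supplies, is: (i) Kustermans' result that for a unitary co-representation one has $(\eta\tensor\i)(U)\in D(S)$ with $S\left((\eta\tensor\i)(U)\right)=(\eta\tensor\i)(U^{*})$; (ii) the polar decomposition $S=R\circ\tau_{-i/2}$, which converts this into an unbounded commutation relation between the slices $(\om_{\Gamma(a),\Gamma(b)}\tensor\i)(U)$ and $K=IL^{1/2}$; and (iii) the KMS condition for $\om$ (moving $\sigma_{\pm i}^{\om}$ across $\om$), which, upon combining the relation of (ii) with its adjoint, yields $(\om_{\Gamma(a),\Gamma(b)}\tensor\i)(U)L\subseteq L(\om_{\nabla\Gamma(a),\nabla^{-1}\Gamma(b)}\tensor\i)(U)$ and hence the commutation of $\nabla\tensor L^{-1}$ with $U$. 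Step (iii) is precisely where the $\a$-invariance of $\om$ enters, and none of these steps is formal. Your write-up defers all of this to ``running the Kustermans--Vaes argument,'' which leaves the entire mathematical content of the theorem unproved.
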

\begin{proof}
We fix some notation. Write $\sigma$ for $\sigma^{\om}$. Let $S,R$
stand for the antipode and unitary antipode of $\G$, respectively.
Denoting by $L^{2}(\G)$ the standard representation Hilbert space
of $\Linfty{\G}$, we let $I,L$ be a conjugation and a strictly positive
operator, respectively, over $L^{2}(\G)$, satisfying $ILI=L^{-1}$
and $R(x)=Ix^{*}I$, $\tau_{t}(x)=L^{it}xL^{-it}$ for every $x\in\Linfty{\G}$,
$t\in\R$ \citep{Kustermans_Vaes__LCQG_C_star,Kustermans_Vaes__LCQG_von_Neumann,Van_Daele__LCQGs}.
Denote by $(\H,\i,\Gamma)$ the GNS construction for $(N,\om)$. As
discussed in \prettyref{sec:prelim_notation}, $\a$ is implemented
by an isometric co-representation $U\in B(\H)\tensorn\Linfty{\G}$
given by $((\i\tensor\theta)(U))\Gamma(a)=\Gamma\bigl((\i\tensor\theta)\a(a)\bigr)$
for all $\theta\in\Lone{\G}$, $a\in N$. The right leg of $U$ is
thus characterized by
\begin{equation}
(\om_{\Gamma(a),\Gamma(b)}\tensor\i)(U)=(\om\tensor\i)\left((b^{*}\tensor\one)\a(a)\right)\qquad(\forall a,b\in N),\label{eq:right_leg_U}
\end{equation}
and similarly
\begin{equation}
(\om_{\Gamma(a),\Gamma(b)}\tensor\i)(U^{*})=(\om\tensor\i)\left(\a(b^{*})(a\tensor\one)\right)\qquad(\forall a,b\in N).\label{eq:right_leg_U_star}
\end{equation}
Recall that as $\G$ is a LCQG, $U$ is in fact unitary \citep[Corollary 4.12]{Brannan_Daws_Samei__cb_rep_of_conv_alg_of_LCQGs}.
Kustermans showed in the proof of \citep[Proposition 5.2]{Kustermans__LCQG_universal}
that, since $U$ is a unitary co-representation of $\G$ (i.e. $U\in B(\H)\tensorn\Linfty{\G}$
and $(\i\tensor\Delta)(U)=U_{12}U_{13}$), it satisfies $(\eta\tensor\i)(U)\in D(S)$
and $S((\eta\tensor\i)(U))=(\eta\tensor\i)(U^{*})$ for every $\eta\in B(\H)_{*}$.
(The argument there uses $C^{*}$-algebra language, and treats co-representations
in the sense that $(\Delta\tensor\i)(U)=U_{13}U_{23}$, but the same
reasoning works in our case.) For all $a,b\in N$, taking $\eta:=\om_{\Gamma(a),\Gamma(b)}$
yields
\[
(\om\tensor\i)\left((b^{*}\tensor\one)\a(a)\right)\in D(S)\quad\text{and}\quad S\left[(\om\tensor\i)\left((b^{*}\tensor\one)\a(a)\right)\right]=(\om\tensor\i)\left(\a(b^{*})(a\tensor\one)\right)
\]
by \prettyref{eq:right_leg_U} and \prettyref{eq:right_leg_U_star}.
Since $S=R\circ\tau_{-i/2}$ by definition (see \citep{Cioranescu_Zsido__analytic_gen}
for this terminology), we have $(\om\tensor\i)\left((b^{*}\tensor\one)\a(a)\right)\in D(\tau_{-i/2})$
and $\tau_{-i/2}\left[(\om\tensor\i)\left((b^{*}\tensor\one)\a(a)\right)\right]=R\left[(\om\tensor\i)\left(\a(b^{*})(a\tensor\one)\right)\right]$.
By \citep[Theorem 6.2]{Cioranescu_Zsido__analytic_gen}, we get
\[
(\om\tensor\i)\left((b^{*}\tensor\one)\a(a)\right)L^{-1/2}\subseteq L^{-1/2}I(\om\tensor\i)\left((a^{*}\tensor\one)\a(b)\right)I,
\]
or equivalently, using that $ILI=L^{-1}$ and putting $K:=IL^{1/2}=L^{-1/2}I$,
\begin{equation}
(\om\tensor\i)\left((b^{*}\tensor\one)\a(a)\right)K\subseteq K(\om\tensor\i)\left((a^{*}\tensor\one)\a(b)\right).\label{eq:K_comm}
\end{equation}
Taking adjoints gives
\begin{equation}
(\om\tensor\i)\left(\a(b^{*})(a\tensor\one)\right)K^{*}\subseteq K^{*}(\om\tensor\i)\left(\a(a^{*})(b\tensor\one)\right).\label{eq:K_star_comm}
\end{equation}
The last two equations hold for every $a,b\in N$. If now $a\in D(\sigma_{-i})$
and $b\in D(\sigma_{i})$, then by the Tomita--Takesaki theory and
\prettyref{eq:K_star_comm},
\[
\begin{split}(\om\tensor\i)\left((b^{*}\tensor\one)\a(a)\right)K^{*} & =(\om\tensor\i)\left(\a(a)(\sigma_{i}(b)^{*}\tensor\one)\right)K^{*}\\
 & \subseteq K^{*}(\om\tensor\i)\left(\a(\sigma_{i}(b))(a^{*}\tensor\one)\right)\\
 & \subseteq K^{*}(\om\tensor\i)\left((\sigma_{-i}(a)^{*}\tensor\one)\a(\sigma_{i}(b))\right).
\end{split}
\]
Hence from \prettyref{eq:K_comm},
\[
(\om\tensor\i)\left((b^{*}\tensor\one)\a(a)\right)K^{*}K\subseteq K^{*}K(\om\tensor\i)\left((\sigma_{i}(b)^{*}\tensor\one)\a(\sigma_{-i}(a))\right).
\]
Write $\nabla$ for the modular operator of $(N,\om)$. Since $K^{*}K=L$,
we conclude that
\[
(\om_{\Gamma(a),\Gamma(b)}\tensor\i)(U)L\subseteq L(\om_{\Gamma(\sigma_{-i}^{\om}(a)),\Gamma(\sigma_{i}^{\om}(b))}\tensor\i)(U)=L(\om_{\nabla\Gamma(a),\nabla^{-1}\Gamma(b)}\tensor\i)(U).
\]
Since $\Gamma(D(\sigma_{-i}))$ and $\Gamma(D(\sigma_{i}))$ are cores
of $\nabla$ and $\nabla^{-1}$, we infer that $\nabla\tensor L^{-1}$
commutes with $U$, namely $U(\nabla\tensor L^{-1})\subseteq(\nabla\tensor L^{-1})U$
(e.g. by \citep[Lemma 5.9]{Kustermans_Vaes__LCQG_C_star}). For every
$x\in N$ we deduce that
\[
\begin{split}(\sigma_{t}\tensor\tau_{-t})\a(x) & =(\nabla^{it}\tensor L^{-it})U(a\tensor\one)U^{*}(\nabla^{-it}\tensor L^{it})\\
 & =U(\nabla^{it}\tensor L^{-it})(a\tensor\one)(\nabla^{-it}\tensor L^{it})U^{*}\\
 & =U(\nabla^{it}a\nabla^{-it}\tensor\one)U^{*}=\a(\sigma_{t}(x)).
\end{split}
\]
This completes the proof.\end{proof}
\begin{acknowledgement*}
We are indebted to L.~Zsid\'{o} for sending us the slides of his
presentation \citep{Zsido__splitting_noncomm_dyn_sys}. We are also
grateful to M.~Alaghmandan, Y.~Choi, M.~Ghandehari, E.~Samei,
P.~M.~So{\ldash}tan, E.~Spinu and V.~G.~Troitsky for fruitful
conversations about the content of this paper, to P.~R.~Buckingham
for many suggestions that led to an improved presentation of the material,
and to the referee for his/her comments.
\end{acknowledgement*}
\bibliographystyle{amsplain}
\bibliography{LCQGs_Ergodic_Thy}

\end{document}